\documentclass[a4paper,reqno]{amsart}

\usepackage[utf8]{inputenc}
\usepackage[english]{babel}
\usepackage[T1]{fontenc}
\usepackage{microtype}
\usepackage{amsmath,amssymb,amsthm}
\usepackage{thmtools,thm-restate}
\usepackage{mathtools}
\usepackage{upgreek}
\usepackage{orcidlink}
\usepackage{hyperref}
\hypersetup{hidelinks}
\usepackage[nameinlink]{cleveref}
\usepackage{suffix}
\usepackage{nccmath}
\usepackage{nicefrac}
\usepackage{enumitem}

\usepackage{yfonts}
\usepackage[hang,flushmargin]{footmisc}

\crefname{equation}{equation}{equation}

\title[{Nonautonomous systems of evolution inclusions}]{Nonautonomous  systems of evolution inclusions}

\author[B.~Aigner]{Bernhard Aigner\orcidlink{0009-0009-8252-162X}}
\address[B.~Aigner]{Institut f\"{u}r Angewandte Analysis\\
  TU Bergakademie Freiberg\\
  Pr\"{u}ferstr. 9, 09599 Freiberg\\
  Germany}
\email[B.~Aigner]{bernhard.aigner@doktorand.tu-freiberg.de}

\author[J.~Simsen]{Jacson Simsen\orcidlink{0000-0002-6683-1363}}
\address[J.~Simsen]{IMC\\
  Universidade Federal de Itajub\'a\\
  37500-903 Itajub\'a\\
  Minas Gerais\\
  Brazil}
\email[J.~Simsen]{jacson@unifei.edu.br}

\author[M.~Waurick]{Marcus Waurick\orcidlink{0000-0003-4498-3574}}
\address[M.~Waurick]{Institut f\"{u}r Angewandte Analysis\\
  TU Bergakademie Freiberg\\
  Pr\"{u}ferstr. 9, 09599 Freiberg\\
  Germany}
\email[M.~Waurick]{marcus.waurick@math.tu-freiberg.de}
\date{\today}

\subjclass{35A01, 35A02 (Primary) 28B20, 26E25, 35Q35, 47H05, 47D06 (Secondary)}
\keywords{evolution inclusion, nonlinear system, set-valued function, measurable selection}

\DeclarePairedDelimiterX{\norm}[1]{\lVert}{\rVert}{#1}
\DeclarePairedDelimiterX{\abs}[1]{\lvert}{\rvert}{#1}
\DeclarePairedDelimiterX{\dset}[2]{\{}{\}}{#1\,\delimsize\vert\,\mathopen{} #2}
\DeclarePairedDelimiterX{\scprod}[2]{(}{)}{#1\delimsize| #2}
\DeclarePairedDelimiterX{\dualprod}[2]{\langle}{\rangle}{#1,#2}
\newcommand{\e}{\mathrm{e}} % euler number
\newcommand{\iu}{\mathrm{i}}% imaginary unit
\newcommand{\dd}{\mathrm{d}} % differential d
\newcommand{\dx}[1][x]{\,\dd{}#1}
\newcommand{\argdot}{\cdot}
\renewcommand{\Re}{\operatorname{Re}}

\renewcommand{\epsilon}{\varepsilon}

\newcommand{\R}{\mathbb{R}}
\newcommand{\C}{\mathbb{C}}
\newcommand{\N}{\mathbb{N}}

\newcommand{\dist}{\operatorname{dist}}
\newcommand{\diam}{\operatorname{diam}}
\newcommand{\Sel}{\textswab{Sel}}
\newcommand{\Leb}{\mathrm{L}}
\newcommand{\Sob}{\mathrm{H}}
\newcommand{\Hd}{d_{\mathrm{Hd}}}
\newcommand{\dom}{\operatorname{dom}}

\theoremstyle{definition}
\newtheorem{definition}{Definition}[subsection]
\newtheorem{remark}[definition]{Remark}
\newtheorem{example}[definition]{Example}
\newtheorem{assumptions}[definition]{Assumptions}
\newtheorem*{definition*}{Definition}
\newtheorem*{remark*}{Remark}

\theoremstyle{theorem}
\newtheorem{theorem}[definition]{Theorem}
\newtheorem{proposition}[definition]{Proposition}
\newtheorem{lemma}[definition]{Lemma}
\newtheorem{corollary}[definition]{Corollary}
\newtheorem*{theorem*}{Theorem}
\newtheorem*{proposition*}{Proposition}
\newtheorem*{lemma*}{Lemma}
\newtheorem*{corollary*}{Corollary}

\begin{document}

\maketitle

\vspace{-1ex}
\begin{abstract}
  We prove the existence of global solutions for some coupled systems of partially nonautonomous evolution inclusions comprised of a Cauchy problem with a compact resolvent semigroup generator and an evolution equation governed by a subdifferential of a real potential. Our system in particular includes nonautonomous generalized Schr\"odinger--Debye systems of inclusions with variable exponents, but extends to hyperbolic-parabolic systems of inclusions in particular to Maxwell--parabolic systems of inclusions. Methodologically, we extend an approach of Vrabie et al.\ to the nonautonomous case and make use of standard semigroup tools to accomodate non-parabolic behaviour of solutions paired with a new existence result for measurable selections. The combination of the latter two requires the set-valued coupling terms to be Hausdorff-continuous, to take bounded, convex and closed values, and to satisfy weak continuity with respect to one variable.
\end{abstract}

\tableofcontents

%%%%%%%%%%%%%%%%%%%%%%%%%%%%%%
%    S E C T I O N
\section{Introduction}
\label{sec:Intro}

In this article, we study coupled systems of inclusions of the form

\begin{equation}
  \label{eq:IVP}
  \left\{
  \begin{aligned}
    \tfrac{\dd}{{\dx[t]}}u + E u &\in F(u,v) &&\text{on}\ (0,T) \text{,}\\
    \tfrac{\dd}{{\dx[t]}}v + A(\argdot)v &\in G(u,v) &&\text{on}\ (0,T) \text{,}\\
    \bigl(u(0),v(0)\bigr) &= (u_{0},v_{0}) &&\text{in}\ H\times \mathcal{H} \text{,}
  \end{aligned}
  \right.
\end{equation}
where $F, G$ are multivalued maps; $-E$ is the generator of a semigroup with compact resolvent on a separable Hilbert space $H$ and for each $t>0$, $A(t)$ is an univalued (i.e., singleton-valued) maximal monotone operator in a real separable Hilbert space $\mathcal{H}$ of {\em subdifferential type}, i.e., $A(t)= \partial \phi^t$ with $\phi$ satisfying assumptions rendering the second part of the above system well-posed for fixed $g$ instead of $G(u,v)$ according to a result of Yotsutani.\footnote{See \cref{ass:A} in \cref{sec:Prelim}.}

Initial values $(u_{0},v_{0})$ are considered in $H\times \mathcal{H}$ and the right-hand sides, $F$ and $G$, are Hausdorff-continuous\footnote{See \cref{def:Hausdorff}.} as well as weakly (Hausdorff) continuous with respect to the first variable\footnote{See \cref{def:whc}.}  and assume nonempty, bounded, closed and convex values. Additionally they may satisfy a linear growth bound, see~\cref{sec:Prelim} for details. Well-posedness of problem \eqref{eq:IVP} for all $T>0$ follows from the continuity assumptions alone (see~\cref{th:LocalExist}), for global (in time) solutions (see~\cref{th:GlobalExist}) we require the growth bound.

This article is motivated by the work \cite{Simsen2024}, where an autonomous system of inclusions with a generalization of the Schr\"odinger--Debye system was considered. In contrast, in this article we consider a time-dependent operator $A$ that can be defined on Sobolev spaces with variable exponent, see \cref{subsec:SchroedingerDebye}. The main motivation for the considered model comes from the application in electrorheological fluids (see \cite{Ruzicka2011, Ruzicka2001, Ruzicka2000}). Another important application is in image processing (see \cite {ChenLevineRao2006, GuoLiuSunWu2011}). Equations with variable exponent growth conditions also occur in magnetostatics (see \cite{Cekic2012}) and capillarity phenomena (see \cite{Avci2013}). The time dependence of the maximal monotone operator introduces extra difficulties compared to the autonomous case. Additionally, we also correct some statements in \cite{Simsen2024} that mainly stem from the fact that solutions of the Schr\"odinger equation do not adhere to the concept of maximal regularity.

As we only require our generator to have compact resolvent, we use  the Yosida-approximation to get around the lack of maximal regularity by increasing the regularity of approximate solutions and prove suitable convergence to solutions. To make up for the lack of maximal regularity, the set-valued right-hand sides $F$ and $G$, which are only upper semicontinuous in parabolic-parabolic systems, have to satisfy the stronger Hausdorff-continuity type conditions instead. One of the main technical contributions of the present article is the proposed approximation scheme, which ensures the existence of approximating selections when the data is approximated as well.

Multivalued right-hand sides are useful when uncertainties or discontinuities appear in the reaction term, while coupled systems occur when different phenomena interact. In these cases, one chooses to work with differential inclusions instead of differential equations (see, for example, \cite{Aubin1984, Budyko1969, Diaz2002, Vrabie1994, Feireisl1991, Kapustyan2008, Melnik1998, Melnik2000, Tolstonogov1992} and the references therein). Such inclusions have been used in modeling in various applications.
They are featured in models of combustion processes in porous media \cite{Feireisl1991} and for the surface temperature on Earth \cite{Budyko1969, Diaz2002}. They appear in numerous applications such as the control of forest fires \cite{Bressan2007} or conduction of electrical impulses in nerve axons \cite{Terman1983, Terman1985}. In climatology, energy balance models may lead to evolution differential inclusions which involve the $p$-Laplacian \cite{Diaz1997, Diaz2006}. Finally, a degenerate parabolic-hyperbolic problem with a differential inclusion appears in a glaciology model (see \cite{Diaz1999}).

The article is structured as follows: We begin by recalling key notions and concepts, in particular regarding multivalued maps, selections and solution theory for the separated (single-valued) equations
\[
 \begin{cases}
   \tfrac{\dd}{{\dx[t]}}u + E u = f &\text{on}\ (0,T),\\
      u(0) = u_{0} &\text{in}\ H,
 \end{cases}\text{ and }\begin{cases}
   \tfrac{\dd}{{\dx[t]}}v + A(\argdot)v = g &\text{on}\ (0,T),\\
      v(0) = v_{0}& \text{in}\ \mathcal{H}.
 \end{cases}
\]
In \cref{sec:Prelim}, we address the solvability of the individual abstract PDE-problems. In \cref{sec:selmulmap}
we focus on measurable selections. In fact, we prove (\cref{th:CloseSelections} -- the main technical contribution of the paper) used for the well-posedness result later on. \Cref{sec:UniformConvergence} provides sufficient and (in the semigroup case additionally necessary) conditions for solutions of the decoupled single-valued problems to converge uniformly --- which is a requirement for local existence.  After  stating some results of general nature in \cref{sec:auxfa}, we then move on to prove local existence of solutions to problem \eqref{eq:IVP} in \cref{sec:LocalExist} and global existence in \cref{sec:GlobalExist}. We conclude with an extensive selection of examples in \cref{sec:Examples}. More specifically, we discuss a particular case of suitable coupling terms for the system \eqref{eq:IVP} and we cover the specific maximal monotone operator $A$ corresponding to the operator used in the generalized Schr\"odinger--Debye system. As for the possible choices of the operator $E$, we discuss the cases where $-E$ corresponds to the generator of the heat semigroup, of the Schr\"odinger operator, of the wave equation and of Maxwell's equations.

All considered Hilbert spaces are separable and real. For this note given any complex Hilbert space we consider its realification: By restricting the corresponding scalars to real scalars and considering only the real part of the scalar-product one obtains a real Hilbert space $H_{\mathbb{R}}$. The realification of a complex Hilbert space $H_{\C}$ is then $H_{\R}\oplus H_{\R}$ with real scalars. Note that multiplication by $\iu$ then corresponds to the operation of multiplying with $\begin{psmallmatrix} 0 & -1 \\ 1 & 0 \end{psmallmatrix}$. As a consequence, objects like $\iu \Delta$ ($\Delta$ being e.g., the usual Dirichlet--Laplacian) still make sense and inherit their properties like maximal dissipativity and norm conservation of the corresponding semigroup from the complex case. We denote by $\rightharpoonup$ weak convergence in the respective contexts. A colon after arrows defines the corresponding limit. For a metric space $(X,d)$, we write for $x\in X, r\geq 0$
\[
   B_X(x,r)\coloneqq \{ y\in X; d(x,y)<r\}\text{ and }   B_X[x,r]\coloneqq \{ y\in X; d(x,y)\leq r\}.
 \]
We omit the index $X$ if the metric is clear from the context.

%%%%%%%%%%%%%%%%%%%%%%%%%%%%%%
%    S E C T I O N
\section{Solvability of the single-valued problems}
\label{sec:Prelim}

For fixed initial values in $H$ and $\mathcal{H}$ respectively, we supplant the set-valued coupling terms $F$ and $G$ on the right-hand side of problem \eqref{eq:IVP} with measurable single-valued functions and split the problem into two parts,
\begin{enumerate}[leftmargin=4ex, label= \arabic*.]
  \item an abstract Cauchy problem with a (autonomous) semigroup generator $-E$ and
  \item an evolution equation with a (non-autonomous) maximal monotone operator $A(\argdot)$.
\end{enumerate}
Before we tackle the simultaneous problem, we shall comment on solvability of the individual parts.

\subsection{The monotone evolution problem}
Let $\mathcal{H}$ be a real and separable Hilbert space. The maximal monotone operator $A(t)$ for $t>0$ we assume to be of {\em subdifferential type}, i.e., $A(t)= \partial \phi^t$ for some real potential $\phi^{t} \colon \mathcal{H}\to \mathbb{R}\cup \{+\infty\}$. We briefly recap the notion of a subdifferential $\partial \phi^{t}$:

\begin{definition}
  \label{def:Subdifferential}
  Let $\phi\colon \mathcal{H}\to \mathbb{R}\cup \{+\infty\}$ be a lower semicontinuous convex function. The {\em effective domain} of $\phi$ is
  \begin{align*}
    \operatorname{dom}(\phi) &\coloneq \{u\in H; \phi(u)<+\infty\},\\
    \intertext{which we assume to be non-empty. For each $u\in \operatorname{dom}(\phi)$ the set}
    \partial\phi(u) &\coloneq \{ w\in H ; \forall v \!\in\! \mathcal{H}\colon \phi(v)-\phi(u)\geq \dualprod{w}{v-u}\}\\
    \intertext{is called the {\em subdifferential} of $\phi$ at $u$ and the domain of the subdifferential $\partial\phi$ is defined as}
    \operatorname{dom}(\partial\phi) &\coloneqq \{u\in \operatorname{dom}(\phi); \partial\phi(u)\neq \emptyset\}.
  \end{align*}
\end{definition}

We remark that the definition of the subdifferential $\partial\phi$ renders it a monotone relation. It is well-known that $\partial\phi$ is maximal monotone in $\mathcal{H}$, i.e.\ $\bigl(1+\lambda\partial\phi\bigr)^{-1}=\mathcal{H}$ for all $\lambda>0$ (see \cite[thm.~4]{Rockafellar1966}). We note that in general, $\partial \phi$ need not be single-valued.

We can now formally provide our assumptions on the operator $A$, which boil down to assumptions on the potential $\phi$.

\begin{assumptions}
  \label{ass:A}
  Let $T>0$. We assume that
  \begin{enumerate}[leftmargin = 5ex, label = (\roman*)]
    \item There exists  $Z \subseteq (0,T]$, a set of measure zero, s.t.\ for all $t \in (0,T]\!\setminus \!Z$, $\phi^t \colon \mathcal{H} \to \mathbb{R}\cup \{+\infty\}$ is lower semicontinuous, convex and has non-empty effective domain. Furthermore we assume
          \begin{itemize}[leftmargin = 3ex]
            \item $\exists \mathcal{D}\subseteq \mathcal{H}\text{ dense } \forall t \in (0,T]\!\setminus \!Z \colon \operatorname{dom}\bigl(\phi^{t}\bigr)=\mathcal{D} $.
            \item $\forall t \in (0,T]\!\setminus \!Z\colon \partial \phi^{t}$ is single-valued.
            \item $\forall t \in (0,T]\!\setminus \!Z \colon \partial\phi^t(0)=\{0\}\text{.}$
          \end{itemize}
    \item The potentials $\phi^{t}$ adhere to the following regularity condition:

          Let $\alpha \in [0,1]$ and set
          \begin{equation*}
            \beta \coloneq \begin{cases}
              2 & \text{if}\ 0\leq \alpha \leq \tfrac{1}{2}\text{,}\\
              \tfrac{1}{1-\alpha} &\text{if}\ \tfrac{1}{2} \leq \alpha \leq 1\text{.}
            \end{cases}
          \end{equation*}
          Then for any $n \in \N_{\geq 1}$ we assume there exist
          \begin{itemize}[leftmargin = 3ex]
            \item$K_{n} > 0$,
            \item  $g_{n}\colon [0,T] \rightarrow \mathbb{R}$ absolutely continuous satisfying $g_{n}' \in \Leb^{\beta}(0,T)$  and
            \item  $h_{n}\colon [0,T] \rightarrow \mathbb{R}$ of bounded variation s.t.
                  \begin{alignat*}{3}
                    &\forall &t \in [0,T]\!\setminus \!Z \,\,&\forall w \!\in \!\operatorname{dom}\bigl(\phi^t\bigr)\cap B[0,r]\,\,\forall s \!\in \![t,T]\!\setminus \!Z \,\, \exists \widetilde{w}\!\in \!\operatorname{dom}\bigl(\phi^s\bigr)\colon\\
                    & &\norm{\widetilde{w} - w} &\leq \abs[\big]{g_{n}(s) - g_{n}(t)}\bigl(\phi^{t}(w) + K_{n}\bigr)^{\alpha}\; \text{ and } \\
                    & &\phi^{s}(\widetilde{w}) &\leq \phi^{t}(w) + \abs[\big]{h_{n}(s) - h_{n}(t)}\bigl(\phi^{t}(w) + K_{n}\bigr)\text{.}
                  \end{alignat*}
          \end{itemize}
  \end{enumerate}
\end{assumptions}

We will now briefly recap solvability of the following evolution problem:
\begin{equation}
  \label{eq:SubdiffProblem}
  \left\{
  \begin{aligned}
    \tfrac{\dd}{\dx[t]}v(t) + \partial \phi^t v(t)& =f(t) \qquad 0 < t\leq T,\\
    v(0)&=v_{0},
  \end{aligned}
  \right.
\end{equation}
for some $v_{0}\in \mathcal{H}$. We will use the notion of {\em strong solutions} introduced by Yotsutani, see \cite[def.~2.1]{Yotsutani1979}.

% STYLE
\vspace{1cm}

\begin{definition}
  A function $v\colon [0,T] \rightarrow \mathcal{H}$ is called a {\em strong solution} of problem \eqref{eq:SubdiffProblem} on $[0,T]$ if
  \begin{enumerate}[leftmargin = 5ex, label = (\roman*)]
    \item $v \in \mathcal{C}([0,T];\mathcal{H})$.
    \item $v$ is absolutely continuous on any compact subset of $(0,T)$.
    \item for a.e.\ $t \in [0,T]$, $v(t) \in \operatorname{dom}(\phi^t)$ and $v(t)$ satisfies \eqref{eq:SubdiffProblem}.
  \end{enumerate}
\end{definition}

From the same source, specifically \cite[\S2]{Yotsutani1979}, we extract the following well-posedness result:

\begin{lemma}[Solution of the monotone part]
  \label{th:MonotoneSolution}
  Under \cref{ass:A}, for $T>0$, $v_{0} \in \mathcal{H}$ and $g \in \Leb^{2}(0,T;\mathcal{H})$, the problem
  \begin{equation*}
    \left\{
    \begin{aligned}
      \tfrac{\dd}{\dx[t]}v + A(\argdot)v &= g &&\text{on}\ (0,T) \text{,}\\
      v(0) &= v_{0} &&\text{in}\ \mathcal{H}\text{,}
    \end{aligned}
    \right.
  \end{equation*}
  where $A(t)=\partial \phi^t$, has a unique strong solution $v_{g} \in \mathcal{C}\bigl([0,T];\mathcal{H}\bigr)$. Furthermore, $v_{g}(t) \in \mathcal{D}$ for almost all $t \in [0,T]$.
\end{lemma}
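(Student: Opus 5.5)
This lemma is essentially a transcription of Yotsutani's existence and uniqueness theorem for time-dependent subdifferential evolution equations, so the plan is to check that \cref{ass:A} places us within the hypotheses of \cite[\S2]{Yotsutani1979} and then read off the conclusions. Yotsutani's framework needs three ingredients. First, a family $\{\phi^t\}$ of proper, convex, lower semicontinuous functionals, defined for almost every $t$. Second, a time-regularity condition of the form $(\alpha,\beta)$: for each $r$, there are functions $g_r$ (absolutely continuous with $g_r'\in\Leb^\beta$) and $h_r$ (of bounded variation) such that every $w\in\dom(\phi^t)$ with $\norm{w}\le r$ has a companion $\widetilde w\in\dom(\phi^s)$ obeying the two displayed estimates. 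Third, the right-hand side $g\in\Leb^2(0,T;\mathcal{H})$.

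The main step is to match these ingredients. Items (i) and (ii) of \cref{ass:A} are exactly Yotsutani's hypotheses, with $r=n$ indexing the radius. Since the conditions are monotone in $r$, it suffices to consider integer radii: for a general $r$ we use $n\ge r$.

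Next comes the initial datum. Yotsutani requires $v_0\in\overline{\dom(\phi^0)}$, or more precisely the closure of the domains near $t=0$. Here $\dom(\phi^t)=\mathcal{D}$ is dense in $\mathcal{H}$ for a.e.\ $t$, so this closure is all of $\mathcal{H}$, and every $v_0\in\mathcal{H}$ is admissible. Yotsutani's theorem then yields a strong solution $v_g$. By definition, $v_g$ lies in $\mathcal{C}([0,T];\mathcal{H})$, is absolutely continuous on compact subsets of $(0,T)$, and satisfies $v_g(t)\in\dom(\phi^t)$ for a.e.\ $t$. Because $\dom(\phi^t)=\mathcal{D}$ off the null set $Z$, we get $v_g(t)\in\mathcal{D}$ for a.e.\ $t$.

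Single-valuedness of $\partial\phi^t$ lets us write the inclusion as the equation $v'+A(t)v=g$.

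Uniqueness follows from the standard monotonicity argument. If $v_1,v_2$ are two strong solutions, then for a.e.\ $t$
\[
\tfrac12\tfrac{\dd}{\dx[t]}\norm{v_1-v_2}^2=-\dualprod{A(t)v_1-A(t)v_2}{v_1-v_2}\le0 .
\]
Integrating this over $[\varepsilon,t]$ uses absolute continuity on compacts. Letting $\varepsilon\to0$ and using continuity at $0$ gives $v_1=v_2$.

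The step needing the most care is verifying that the version of the $(\alpha,\beta)$ condition and the class of initial data in \cite{Yotsutani1979} coincide with our formulation. Our condition is quantified only over $t\notin Z$ and requires $g_n'\in\Leb^\beta$ with the case distinction for $\beta$. If Yotsutani's theorem for general $v_0$ in the closure of the domains is stated only for $\beta=2$ or a restricted range of $\alpha$, we would instead prove the result for $v_0\in\mathcal{D}$. We would then extend it to arbitrary $v_0\in\mathcal{H}$ by density of $\mathcal{D}$ together with the contraction estimate
\[
\norm{v_1(t)-v_2(t)}\le\norm{v_1(0)-v_2(0)},
\]
which is obtained as in the uniqueness step. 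This estimate makes approximate solutions Cauchy in $\mathcal{C}([0,T];\mathcal{H})$. The remaining task is to transfer the strong-solution properties to the limit, which again comes from Yotsutani's regularization estimates, namely $t\,\phi^t(v(t))$ bounded and $\sqrt t\,v'\in\Leb^2$.
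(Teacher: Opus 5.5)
Your proposal is correct and follows the paper, which gives no argument beyond extracting the lemma from \cite[\S2]{Yotsutani1979}. What you spell out is exactly what that citation leaves implicit: \cref{ass:A} implies Yotsutani's $(\alpha,\beta)$-hypotheses, density of $\mathcal{D}$ makes every $v_0\in\mathcal{H}$ admissible, $v_g(t)\in\mathcal{D}$ a.e.\ because $\dom(\phi^t)=\mathcal{D}$ off $Z$, and uniqueness follows by monotonicity; your density/contraction fallback is therefore unnecessary but harmless.
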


\subsection{The Cauchy problem}
For the semigroup part, we additionally recap the notion of {\em extrapolation} spaces and operators:
\begin{definition}
  Let $H_{0}$ and $H_{1}$ be Hilbert spaces and let $C\colon H_{0}\supseteq \operatorname{dom}(C) \to H_{1}$ be linear, densely defined and closed. We set
  \begin{align*}
    C^{\diamond}&\colon H_{1}\to \operatorname{dom}(C)'\text{,}\\
    (C^{\diamond}\varphi)(x)&\!\coloneq \dualprod{\varphi}{C x}_{H_{1}} \qquad \varphi \in H_{1},x\in \operatorname{dom}(C)\text{.}
  \end{align*}
  We call $C_{-1}\coloneq (C^{\ast})^{\diamond}$ the {\em extrapolated operator} of $C$ and the dual space $\Sob^{-1}(C)\coloneq \bigl(\operatorname{dom}(C),\norm{\argdot}_{\mathrm{graph}(C)}\bigr)'$ the {\em extrapolation space}.
\end{definition}
For an introduction to and properties of extrapolation spaces we refer to \cite[ch.~9]{Waurick2022}, and particularly to  \cite{EngelNagel2001} in the context of $\mathrm{C}_0$-semigroups. Now we can state the following classical result:
\begin{lemma}[{{Solution of the semigroup part; \cite[sec.~4.2]{Pazy83}}}]
  \label{th:SemiGroupSolution}
  For $T_{m}>0$, $u_{0}\in H$ and $f \in \Leb^{2}(0,T_{m};H)$, the problem
  \begin{equation*}
    \left\{
    \begin{aligned}
      \tfrac{\dd}{\dx[t]}u + E u &= f &&\text{on}\ (0,T_{m}) \text{,}\\
      u(0) &= u_{0} &&\text{in}\ H\text{,}
    \end{aligned}
    \right.
  \end{equation*}
  has a unique (mild) solution $u_{f} \in \mathcal{C}\bigl([0,T_{m}];H\bigr)$ for any $u_{0}\in H$ given by
  \[
     u_f(t) = T(t)u_0 + \int_{0}^t T(t-s)f(s)\dx[t].
  \] 
\end{lemma}
This result is well-known; the estimate follows from a simple application of the Cauchy--Schwarz inequality and the contiuity can be obtain by substracting $u_f(t)-u_f(s)$ and using standard estimates as well as the strong continuity of $T$.

\begin{remark} One way to prove the representation formula via Duhamel's principle is to use the extrapolation space $\Sob^{-1}(E)$ and realise that then $u_0 \in \dom(-E_{-1})$, where $-E_{-1}$ generates the extrapolated semigroup $(T_{-1}(t))_{t\geq 0}$ (an extension of $T$) on $\Sob^{-1}(E)$, and $f\in \Leb^2\big(0,T_m;\dom(-E_{-1})\big)$; see, e.g., \cite[thm.~II.5.5]{EngelNagel2001} for extrapolation of generators and their semigroups. Hence, the argument in \cite[sec.~4.2]{Pazy83} applies to obtain the claimed variation of constants formula, i.e., the validity of Duhamel's principle.
\end{remark}

\section{Selections of multivalued maps}
\label{sec:selmulmap}

\subsection{Properties of multivalued maps}
To formulate the necessary conditions for the right-hand sides $F$ and $G$ in \eqref{eq:IVP}, we recall several key notions for multivalued maps. We write $\mathcal{P}(M)$ for the power set of a set $M$.

% % STYLE REASONS!!!
% \vspace{1cm}

\begin{definition}
  \label{def:Semicontinuity}
  Let $X$ be a Banach space and $U$ a topological space.
  \begin{itemize}[leftmargin = 5ex]
    \item A mapping $G\colon U \rightarrow \mathcal{P}(X)$ is called {\em upper semicontinuous (weakly upper semicontinuous)} at $u \in U$, if for each open (relatively weakly open) subset $D$ in $X$ satisfying $G(u) \subseteq D$, there exists a neighbourhood $V$ of $u$ s.t.\ $G(v) \subseteq D$ for each $v \in V$.\\
          If $G$ is upper semicontinuous (weakly upper semicontinuous) at each $u \in U$, then it is called {\em upper semicontinuous (weakly upper semicontinuous)} on $U$.
    \item A function $f \colon U \rightarrow \mathbb{R}\cup \{-\infty, +\infty\}$ is called {\em lower semicontinuous} at a point $u \in U$ if for every real $y < f(u)$ there exists a neighbourhood $V$ of $u$ s.t.\ $f(v)>y$ for each $v \in V$.
  \end{itemize}
\end{definition}

Clearly, each upper semicontinuous map $G\colon U \to \mathcal{P}(X)$ is weakly upper semicontinuous. Furthermore, if $G$ is a univalued map (i.e., the images of singletons are singletons), it is upper semicontinuous (weakly upper semicontinuous) on $U$ if and only if it is a continuous (weakly continuous) map on $U$ in the usual sense.

Often times, continuity properties alone do not prove to be enough for technical reasons and we will make use of the following terminology.
\begin{definition}
  \label{def:BCC}
   Let $X$ be a Banach space and $U$ a topological space. A mapping $G\colon U \rightarrow \mathcal{P}(X)$ is called {\em bcc} if for every $u\in U$, the image $G(u)$ is nonempty, bounded, closed and convex.
\end{definition}

We remark that this property is sometimes included in the notion of semicontinuity, e.g., in \cite[def.~3.1.3]{Vrabie1995}.

\begin{definition}
  \label{def:LinearGrowth}
  Let $H_{1}, H_{2}, H$ be Hilbert spaces and $F\colon H_{1} \times H_{2}\to \mathcal{P}(H)$.
  \begin{itemize}[leftmargin = 5ex]
    \item $F$ is called {\em bounded} if it maps bounded subsets of $H_{1} \times H_{2}$ into bounded subsets of  $H$.
    \item $F$ is said to satisfy a {\em linear growth bound} if
          \begin{equation*}
            \exists a,b,c \!> \!0 \,\forall (u,v) \!\in \!H_{1}\!\times \!H_{2}\, \forall f \!\in \!F(u,v)\colon \norm{f}_{H} \leq a \norm{u}_{H_{1}} + b \norm{v}_{H_{2}} + c.
          \end{equation*}
  \end{itemize}
\end{definition}

\begin{remark}
  \Cref{def:LinearGrowth} provides a traditional linear growth bound that we will use for our existence result in \cref{sec:GlobalExist}, i.e., \cref{th:GlobalExist}. We further comment that the notion of ``positive sublinearity'' used in \cite{SimsenWittbold2019}, that was introduced in \cite[def.~3.2.5]{Vrabie1995}, where the linear growth bound only has to hold outside of a ball around the origin and under certain dual pairing conditions, is too weak for our purposes and seems of pure mathematical convenience anyway, unifying a ``usual sublinear growth condition'' and a ``sign condition''. To us it is unclear when such a weakening is actually necessary or practical in applications.
\end{remark}

The overal general set-up applies to the semigroup side of problem \eqref{eq:IVP} without maximal regularity. This however was the key element why earlier approaches worked. To overcome this, we ask for stronger continuity conditions on the set-valued side. For that purpose we recall the notion of Hausdorff-distance next.

\begin{definition}
  \label{def:Hausdorff}
  For a Banach space $X$ and subsets $A,B \in \mathcal{P}(X)$ we define the {\em Hausdorff-distance} as
  \begin{equation*}
    \Hd (A,B)\coloneq \max \bigl\{\underset{a \in A\, b \in B}{\mathrm{sup}\;\mathrm{inf}} d(a,b), \underset{b \in B\, a \in A}{\mathrm{sup}\;\mathrm{inf}} d(a,b)\bigr\}.
  \end{equation*}
   The Haudorff-distance defines a semi-metric on the set of all bounded subsets of $X$. If both sets $A,B$ are (bounded and) closed then $\Hd(A,B) = 0$ implies $A=B$, see~\cite[\S28]{Hausdorff1957}.
   We further introduce
   \[
     \mathcal{BC}(X) \coloneqq \{ A\in \mathcal{P}(X); A \text{ bounded, closed, non-empty}\}.
   \]
  A bcc map $F\colon X \to \mathcal{P}(X)$ is called {\em Hausdorff-continuous}, if $F$ is continuous as a mapping from $(X,\|\cdot\|)$ to $(  \mathcal{BC}(X),\Hd)$. 
  \end{definition}

\begin{remark}
  It is easy to verify that any bcc map, which is Hausdorff-continuous is indeed upper semicontinuous.
\end{remark}

\subsection{Selections}
Since we investigate the multivalued problem \eqref{eq:IVP}, we have to introduce the key notion of {\em selections} for solvability. For the purposes of this subsection, let $X$ be a Banach space and $A \subseteq \mathbb{R}^{n}$, $n \geq 1$, a measurable subset.
\begin{definition}
  A map $F\colon A \to \mathcal{P}(X)$ is called {\em measurable} if  for each closed subset $C \subseteq X$ the preimage
  \begin{equation*}
    F^{-1}(C) = \{ y \in A ; F(y) \cap C \neq \emptyset \}
  \end{equation*}
  is Lebesgue measurable.\\
  A {\em selection} of $F\colon A \to \mathcal{P}(X)$, is a function $f\colon A \to X$ with $f(y) \in F(y)$ for almost every $y \in A$. We denote
  \begin{equation*}
    \Sel (F) = \{ f ;  f \text{ measurable selection of } F \}.
  \end{equation*}
\end{definition}
If $F$ is a univalued map, the definition of measurability is equivalent to the usual definition of a measurable function. We note that $\Sel (F)$ may be empty. Thus, we require criteria to find suitable selections, which we present next.
\begin{lemma}[Kuratowski--Ryll--Nardzewski, {\cite[thm.~3.1.1]{Vrabie1995}}]
  \label{th:SelectionBySeparability}
  If $X$ is separable and $F\colon A \rightarrow \mathcal{P}(X)$ is measurable s.t.\ for a.e. $y \in A$,  $F(y)$ is nonempty and closed, then  $\Sel(F) \neq \emptyset$.
\end{lemma}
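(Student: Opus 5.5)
The plan is to construct the selection by approximating measurable functions with countably-valued ones and passing to a limit; this is the classical route to the Kuratowski--Ryll--Nardzewski theorem. We reduce to $F(y)$ nonempty and closed for every $y$ by redefining $F$ on the exceptional null set, say as a constant closed singleton. This does not affect measurability, since Lebesgue measure is complete, and a selection only needs to satisfy $f(y)\in F(y)$ almost everywhere. Fix a countable dense set $\{x_k\}_{k\in\N}\subseteq X$; this is where separability of $X$ enters.

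First we record that for every open $U\subseteq X$ the set $F^{-1}(U)=\{y; F(y)\cap U\neq\emptyset\}$ is measurable. Indeed, $U=\bigcup_m C_m$ with $C_m=\{x; \dist(x,X\setminus U)\geq 1/m\}$ closed, and hence $F^{-1}(U)=\bigcup_m F^{-1}(C_m)$. In particular $y\mapsto \dist(x,F(y))$ is measurable for each fixed $x$, since $\{y; \dist(x,F(y))<r\}=F^{-1}(B(x,r))$.

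Next we build inductively measurable maps $f_n\colon A\to\{x_k\}$ with
\[
\dist\bigl(f_n(y),F(y)\bigr)<2^{-n}\quad\text{and}\quad \norm{f_n(y)-f_{n-1}(y)}<2^{-n+2}\qquad\text{for all } y\in A.
\]
For $n=0$ let $f_0(y)=x_k$ with $k$ minimal such that $F(y)\cap B(x_k,1)\neq\emptyset$. Such a $k$ exists by density and nonemptiness, and $f_0$ is measurable because each level set is a finite Boolean combination of the measurable sets $F^{-1}(B(x_j,1))$. Given $f_{n-1}$, let $f_n(y)=x_k$ with $k$ minimal such that
\[
F(y)\cap B(x_k,2^{-n})\neq\emptyset\quad\text{and}\quad \norm{x_k-f_{n-1}(y)}<2^{-n+2}.
\]
Existence follows by choosing $z\in F(y)$ with $\norm{z-f_{n-1}(y)}<2^{-n+1}$ and then $x_k$ with $\norm{x_k-z}<2^{-n}$. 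Measurability holds because on each measurable piece $\{f_{n-1}=x_j\}$ the defining conditions are again measurable sets of the above type.

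Then $(f_n(y))_n$ is Cauchy uniformly in $y$, so $f\coloneqq\lim_n f_n$ exists and is measurable as a pointwise limit of measurable, countably-valued functions. Moreover $\dist(f(y),F(y))=0$, and closedness of $F(y)$ gives $f(y)\in F(y)$. The main obstacle is the measurability bookkeeping in the inductive step, i.e.\ checking that the "minimal index" choices yield measurable sets. This is exactly why we first pass from closed to open preimages. Alternatively, we could simply invoke \cite[thm.~3.1.1]{Vrabie1995}.
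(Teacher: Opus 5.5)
Your argument is correct. It is the classical successive-approximation proof of the Kuratowski--Ryll-Nardzewski theorem. The paper gives no proof of this lemma at all; it simply imports the result from \cite[thm.~3.1.1]{Vrabie1995}.

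What your route buys is transparency about which hypotheses are actually used:
\begin{itemize}
  \item The paper's closed-preimage notion of measurability is needed only through the measurability of open preimages $F^{-1}(U)$, which you correctly derive from $U=\bigcup_m C_m$.
  \item Completeness of the Lebesgue measure is exactly what lets you modify $F$ on the exceptional null set.
  \item Separability of $X$ enters only through the countable dense set $\{x_k\}$, which makes the minimal-index choices measurable.
  \item Completeness of $X$ is needed to pass to the limit of the uniformly Cauchy sequence $(f_n(y))_n$. The statement does not mention it, but it holds because $X$ is a Banach space in this subsection. You should say this explicitly, since the lemma as stated only mentions separability.
\end{itemize}

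As a by-product, your selection is a uniform limit of countably-valued measurable functions, hence strongly (Bochner) measurable. That is the form in which the paper later uses selections as elements of $\Leb^{2}$.

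The citation, of course, is shorter and is all the paper needs.
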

If $U$, $X$, $M$ are sets and $u\colon M\to U$, $F\colon U\to \mathcal{P}(X)$, we abbreviate 
\[
   F(u)\coloneqq F\circ u \colon M\to \mathcal{P}(X).
\]

\begin{theorem}[{\cite[thm.~3.1.2]{Vrabie1995}}]
  \label{th:SelectionByWUS}
  Let $U$ be a topological space and let $\emptyset\neq M\subseteq \mathbb{R}^d$ be bounded and Lebesgue-measurable, $d \geq 1$. Let $F\colon U \to \mathcal{P}(X)$ be weakly upper semicontinuous. Assume  $u,u_n\colon M \to U$ and $f_n \in \Sel F(u_n) $ for $n \in \mathbb{N}$ satisfy
  \begin{equation*}
    f_n \rightharpoonup\colon f \quad \text{in}\ \Leb^1(M;X) \qquad \text{and} \qquad
    u_n(y) \to u(y) \quad \text{a.e.\ in}\ M\text{.}
  \end{equation*}
  Then $f \in \Sel F(u)$.
\end{theorem}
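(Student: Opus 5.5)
The plan is the classical Mazur--plus--separation argument. I will use the convention recalled after \cref{def:BCC}: as in \cite[def.~3.1.3]{Vrabie1995}, semicontinuity there includes the requirement that the values $F(x)$ be nonempty, closed and convex. At least closedness and convexity of $F(u(y))$ are indispensable for the argument below. The limit $f$ is an element of $\Leb^1(M;X)$, so it is measurable, and it only remains to show $f(y)\in F(u(y))$ for a.e.\ $y\in M$.

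\textbf{Step 1 (Mazur).} Since $f_n\rightharpoonup f$ in $\Leb^1(M;X)$, for each $n$ the weak limit $f$ lies in the weak closure of the convex hull of $\{f_k; k\geq n\}$. By Mazur's lemma this weak closure equals the strong closure. Hence there are finite convex combinations
\[
g_n=\sum_{k=n}^{N_n}\lambda^n_k f_k,\qquad \lambda^n_k\geq 0,\ \sum_{k}\lambda^n_k=1,
\]
with $\norm{g_n-f}_{\Leb^1(M;X)}<1/n$. Passing to a subsequence, $g_n(y)\to f(y)$ for a.e.\ $y\in M$.

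\textbf{Step 2 (good points).} I collect the following countably many null sets into one null set $N$: the exceptional sets where $f_k(y)\notin F(u_k(y))$, the set where $u_k(y)\not\to u(y)$, and the set where $g_n(y)\not\to f(y)$. Fix $y\in M\setminus N$.

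\textbf{Step 3 (separation).} Suppose $f(y)\notin F(u(y))$. Since $F(u(y))$ is nonempty, closed and convex, the Hahn--Banach separation theorem gives $x^*\in X^*$ and $\alpha\in\R$ with
\[
\dualprod{x^*}{z}<\alpha \quad\text{for all } z\in F(u(y)),\qquad \dualprod{x^*}{f(y)}>\alpha.
\]
The set $D\coloneqq\{z\in X;\ \dualprod{x^*}{z}<\alpha\}$ is weakly open and contains $F(u(y))$. Weak upper semicontinuity of $F$ at $u(y)$ yields a neighbourhood $V$ of $u(y)$ with $F(v)\subseteq D$ for all $v\in V$. Because $u_k(y)\to u(y)$, we have $u_k(y)\in V$ for all $k\geq k_0$. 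Hence $f_k(y)\in D$ for $k\geq k_0$.

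\textbf{Step 4 (contradiction).} For $n\geq k_0$, $g_n(y)$ is a convex combination of elements of the convex set $D$, so $\dualprod{x^*}{g_n(y)}<\alpha$. Letting $n\to\infty$ gives $\dualprod{x^*}{f(y)}\leq\alpha$, contradicting the choice of $x^*$ and $\alpha$. Therefore $f(y)\in F(u(y))$ for every $y\in M\setminus N$, i.e.\ $f\in\Sel F(u)$.

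The only delicate point is Step 3. It requires convex and closed values, which is why the argument relies on the bcc-type convention. It also requires that the convergence $u_k(y)\to u(y)$ be taken in the topology of $U$, so that weak upper semicontinuity can be applied pointwise.
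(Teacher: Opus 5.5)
Your argument is correct. It is essentially the standard proof of the cited \cite[thm.~3.1.2]{Vrabie1995}: Mazur's lemma gives convex combinations converging a.e., and weak upper semicontinuity is then tested on a separating open half-space. The paper itself gives no proof and only quotes that result.

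You are also right to insist on closed convex values. The paper's \cref{def:Semicontinuity} does not include them, and without them the statement fails: take $F\equiv\{-1,1\}\subseteq\R$ and $f_n(y)=\operatorname{sign}\sin(2^n\pi y)$ on $M=[0,1]$, which converge weakly to $0\notin\{-1,1\}$. So the convention of \cite[def.~3.1.3]{Vrabie1995}, mentioned after \cref{def:BCC}, is genuinely needed.
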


\subsection{Approximation of selections}
In this part we provide a result regarding availability of selections close enough in $\Leb^{2}$-norm, which --- to the best of our knowledge --- is new. The result reads as follows.

\begin{theorem}
  \label{th:CloseSelections}
  Let $H$ be a separable Hilbert space. Let $F\colon H\to \mathcal{P}(H)$ be bcc and Hausdorff-continuous. Let $(u_{n})_{n}$  in $\mathcal{C}\bigl([0,T];H\bigr)$ converge to $u \in  \mathcal{C}\bigl([0,T];H\bigr)$. Then
  \begin{equation*}
    \forall \epsilon \!> \!0 \,\exists n_{0}\!\in \!\N\,\forall n\!\geq \!n_{0},  f \in \Sel F(u) \,\exists f_{n}\!\in \!\Sel F(u_{n})\colon \norm{f_{n}-f}_{\Leb^{2}(0,T;H)}\leq \epsilon.
  \end{equation*}
\end{theorem}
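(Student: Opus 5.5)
The plan is to project $f$ pointwise onto the closed convex sets $F(u_n(t))$ and to control the resulting error uniformly through Hausdorff-continuity on the compact set swept out by $u$.

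\emph{Step 1: uniform modulus.} The set $K\coloneqq u([0,T])$ is compact in $H$. Since $F$ is continuous into $(\mathcal{BC}(H),\Hd)$, it is uniformly continuous near $K$ in the following sense: for every $\eta>0$ there is $\delta>0$ such that
\[
\Hd\bigl(F(x),F(y)\bigr)\le\eta \quad\text{for all } x\in K,\ y\in H \text{ with } \norm{x-y}<\delta.
\]
This follows from the usual Lebesgue-number argument. Cover $K$ by finitely many balls $B(x_i,\delta_i/2)$, where $\delta_i$ witnesses continuity at $x_i$ with tolerance $\eta/2$, and apply the triangle inequality for $\Hd$.

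Because $u_n\to u$ in $\mathcal{C}([0,T];H)$, there is $n_0$ with $\sup_t\norm{u_n(t)-u(t)}<\delta$ for $n\ge n_0$. Hence $\Hd\bigl(F(u_n(t)),F(u(t))\bigr)\le\eta$ for all $t$ and all $n\ge n_0$. Choose $\eta=\epsilon/\sqrt{T}$. Note that $n_0$ depends only on $\epsilon$ and not on $f$, which is what the quantifier order in the statement requires.

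\emph{Step 2: the projection.} For $f\in\Sel F(u)$ define
\[
f_n(t)\coloneqq P_{F(u_n(t))}f(t),
\]
the metric projection onto the nonempty closed convex set $F(u_n(t))$. This is well defined and single-valued since $H$ is Hilbert and $F$ is bcc. For a.e.\ $t$ we have $f(t)\in F(u(t))$, so
\[
\norm{f_n(t)-f(t)}=\dist\bigl(f(t),F(u_n(t))\bigr)\le\Hd\bigl(F(u(t)),F(u_n(t))\bigr)\le\eta.
\]
Squaring and integrating gives $\norm{f_n-f}_{\Leb^2(0,T;H)}\le\eta\sqrt{T}=\epsilon$.

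\emph{Step 3: measurability of $f_n$ (the main obstacle).} I would argue as follows.
\begin{itemize}
\item The map $t\mapsto F(u_n(t))$ is continuous into $(\mathcal{BC}(H),\Hd)$, as a composition of continuous maps.
\item For each fixed $y\in H$, the function $t\mapsto\dist\bigl(y,F(u_n(t))\bigr)$ is therefore continuous, since $\abs{\dist(y,A)-\dist(y,B)}\le\Hd(A,B)$. Consequently the set-valued map $t\mapsto F(u_n(t))$ is measurable in the sense of the definition. One can check this via a Castaing representation, or directly: for open $O$, $\{t;\,F(u_n(t))\cap O\neq\emptyset\}$ is open, and closed sets are countable intersections of open neighbourhoods.
\item Because $f$ is measurable, $\Phi(t)\coloneqq\dist\bigl(f(t),F(u_n(t))\bigr)$ is measurable. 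This uses that $(t,y)\mapsto\dist(y,F(u_n(t)))$ is jointly continuous, being $1$-Lipschitz in $y$ and continuous in $t$ uniformly in $y$.
\item The map $t\mapsto\{f_n(t)\}=F(u_n(t))\cap B\bigl[f(t),\Phi(t)\bigr]$ is then the intersection of measurable closed-valued maps. By the Kuratowski--Ryll--Nardzewski framework (\cref{th:SelectionBySeparability}, with separability of $H$), it is measurable and has a measurable selection, which must coincide with $f_n$ because the value is a singleton.
\end{itemize}
As a fallback, if these intersection arguments prove delicate, I would approximate $f$ by countably-valued measurable functions $f^k\to f$ a.e. For each $f^k$ the projection $t\mapsto P_{F(u_n(t))}f^k(t)$ is piecewise of the form $t\mapsto P_{F(u_n(t))}y$. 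Such maps are continuous in $t$, because projections onto closed convex sets depend continuously, H\"older-$\tfrac12$, on the set in the Hausdorff metric for bounded data. Then pass to the limit pointwise, using the $1$-Lipschitz property of the projection in $y$.

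Finally, $f_n(t)\in F(u_n(t))$ for every $t$, so $f_n\in\Sel F(u_n)$. It also lies in $\Leb^2$, since $f\in\Leb^2$ by the estimate of Step 2 (or $F$ is bounded on the compact neighbourhood of $K$), which completes the proof.
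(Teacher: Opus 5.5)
Your proof is correct, and its core is the paper's: once $\chi_n(t)\neq\emptyset$, the paper's $\pi_n(t)$ (the nearest point to $f(t)$ in $\chi_n(t)=\{x\in F(u_n(t));\ \norm{x-f(t)}\le\varepsilon\}$) is exactly your $P_{F(u_n(t))}f(t)$. The $\Leb^2$-bound is the same pointwise estimate by $\Hd\bigl(F(u(t)),F(u_n(t))\bigr)$. Your Step~1 replaces \cref{lem:ubauto,lem:easycons} by a Lebesgue-number argument that needs only compactness of $u([0,T])$.

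The genuine difference is how measurability of the projection is proved. The paper takes three steps:
\begin{enumerate}
\item it uses Lusin's theorem to get compacts $K_k$ on which $f$ is continuous;
\item it proves Hausdorff-continuity of $t\mapsto\chi_n(t)$ on $K_k$ via \cref{lem:slaterinequ,lem:hdcont}, needed because intersections with balls are not Hausdorff-continuous in general;
\item it then applies item~\ref{it:ProjEst} of \cref{rem:metrproj}.
\end{enumerate}
Since you project onto $F(u_n(t))$ itself, none of this is needed. The map $(t,y)\mapsto P_{F(u_n(t))}y$ is $1$-Lipschitz in $y$. By the same H\"older-$\tfrac12$ estimate, with a common bound $R$ for the $\Hd$-continuous family $F(u_n(\cdot))$ on $[0,T]$, it is continuous in $t$ uniformly for $y$ in bounded sets. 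Hence it is jointly continuous, and $f_n$ is measurable as its composition with $t\mapsto(t,f(t))$; this is precisely what your countably-valued fallback proves. Your route is shorter, dispenses with Lusin and the intersection lemmas, and directly gives the quantitative form in \cref{rem:actualthm3.3.1}, namely $\norm{g-f}_{\Leb^2(0,T;H)}\le\sqrt{T}\sup_t\Hd\bigl(F(u(t)),F(v(t))\bigr)$, which is what is used later.

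Do not lead with the first version of your Step~3, because two of its steps are not justified as written.
\begin{itemize}
\item Lower semicontinuity together with $C=\bigcap_k O_k$ only gives $\bigcap_k\{t;\,F(u_n(t))\cap O_k\neq\emptyset\}=\{t;\,\inf_{x\in F(u_n(t))}\dist(x,C)=0\}$. For non-compact values this can be strictly larger than $\{t;\,F(u_n(t))\cap C\neq\emptyset\}$: in infinite dimensions a bounded closed convex set and a closed set can be disjoint at distance $0$.
\item Kuratowski--Ryll-Nardzewski provides selections of a measurable map. It does not show that the intersection $F(u_n(\cdot))\cap B[f(\cdot),\Phi(\cdot)]$ is measurable.
\end{itemize}
Both claims are true for the complete Lebesgue $\sigma$-algebra, but only via the measurable projection theorem (Himmelberg; Aubin--Frankowska). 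Your fallback, or the joint-continuity composition above, avoids this entirely.

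One small correction: $K$ has no compact neighbourhood in infinite dimensions. The $\Leb^\infty$-bounds on $f$ and $f_n$ come instead from $\Hd$-continuity of $t\mapsto F(u(t))$ on $[0,T]$ together with $F(u_n(t))\subseteq F(u(t))+B[0,\eta]$.
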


The proof of \cref{th:CloseSelections} requires a lot of preparation. Partly this is due to the fact that Hausdorff-continuity in inifinite-dimensional contexts appears to be a non-standard setting. The reason for that might be that often times compactness is used as a standard assumption for the sets involved, which we choose to avoid here. Yet, the Hilbert space structure helps a lot in overcoming potential difficulties. For convenience of the reader, we provide a rather detailed presentation of the subtleties in this context. 

In the following remark, we recall some standard properties of the metric projection in Hilbert spaces.
\begin{remark}
  \label{rem:metrproj}
  Let $H$ be a Hilbert space and $C\subseteq H$ be non-empty, closed and convex.
  \begin{enumerate}[label = (\roman*), leftmargin = 5ex]
    \item The {\em metric projection} given by
          \begin{equation*}
            p_C \colon H \to C, \quad x \mapsto \mathrm{arg min}_{c \in C}\norm{c - x}_{H}
          \end{equation*}
          is well-defined and Lipschitz-continuous with Lipschitz-constant $1$, see \cite[p.~142]{BK15}.
    \item\label{it:ProjCrit} For all $x\in H$ and $y \in H$ we have $y = p_C(x)$ if and only if (again see  \cite{BK15})
          \[
          \forall r \in C\colon \dualprod{x - y}{r - y} \leq 0.
          \]
    \item\label{it:ProjEst} If $D\subseteq H$ is non-empty, bounded, closed, and convex, then for $x\in H$
          \[
          \norm{p_C(x)-p_{D}(x)}  \leq \sqrt{(4 \|x\|+2R)\Hd(C,D)},
          \]
          where $R>0$ is a common bound for both $C$ and $D$.\\
          Indeed, let $\varepsilon > \Hd(C,D)$, $x\in H$. Then we find $p \in C$ and $q\in D$ s.t.
          \[
          \max\{\|q - p_C(x)\|,\|p-p_{D}(x)\|\}<\varepsilon.
          \]
          \begin{align*}
            % \qquad is STYLE!!!
            \qquad\norm[\big]{p_{C}(x) \!- \!p_{D}(x)}^{2}
            & = \dualprod[\big]{x \!- \!p_D(x) - \big(x \!- \!p_C(x)\big)}{p_C(x) \!- \!p_D(x)} \\
            & = \dualprod[\big]{x \!- \!p_D(x)}{p_C(x) \!- \!p_D(x)} + \dualprod[\big]{x \!- \!p_C(x)}{p_D(x) \!- \!p_C(x)} \\
            & = \dualprod[\big]{x \!- \!p_D(x)}{p_C(x) \!- \!q} + \dualprod[\big]{x \!- \!p_D(x)}{q \!- \!p_D(x)} \\
            & \quad+  \dualprod[\big]{x \!- \!p_C(x)}{p_D(x) \!- \!p} + \dualprod[\big]{x \!- \!p_C(x)}{p \!- \!p_C(x)} \\
            & \smashoperator[l]{\mathop{\leq}^{\ref{it:ProjCrit}}_{\text{Cauchy--Schwarz}}} \norm[\big]{x \!- \!p_D(x)} \varepsilon + \norm[\big]{x \!- \!p_C(x)} \varepsilon.
          \end{align*}
          Finally, from
          \begin{align*}
            \norm{x-p_D(x)} &\leq \|x\|+\|p_D(0)\|+ \|p_D(x)-p_D(0)\| \leq 2\|x\|+\|p_D(0)\|,\\
            \intertext{and similarly,}
            \norm{x-p_C(x)} &\leq 2\|x\|+R,
          \end{align*}
          it follows that
          \begin{equation*}
            \norm{p_{C}(x)-p_{D}(x)} \leq \sqrt{(4 \|x\|+2R)\Hd(C,D)}.
          \end{equation*}
  \end{enumerate}
\end{remark}

Next, we recall a well-known fact on the characterisation of compact subsets of metric spaces.

\begin{remark}\label{rem:preccom} Let $X$ be a metric space. Then $X$ is compact, if and only if, $X$ is complete and {\em pre-compact} (i.e., {\em totally bounded}: for all $\varepsilon>0$ there is a finite set $G\subseteq X$ such that $X= \bigcup_{r\in G}B(r,\varepsilon)$). Note that, as $X$ is metric, $X$ is compact if and only if it is sequentially compact. This observation serves as a sensible step to show the equivalence.
\end{remark}

\begin{lemma}\label{lem:ubauto} Let $M$ be a set, $X$ a complete metric space, $\phi\colon M\to X$ and $\phi_n\colon M\to X$ s.t.\ $\phi_n[M]$ is pre-compact for all $n\in \N$. If $\phi_n\to \phi$ uniformly, then
\[
    \overline{\bigcup_{n\in \N} \phi_n[M]}\subseteq X\text{ is compact.}
\]
\end{lemma}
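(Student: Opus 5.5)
Since $X$ is complete, by \cref{rem:preccom} it suffices to show that $K\coloneqq\bigcup_{n\in\N}\phi_n[M]$ is pre-compact. The closure of a pre-compact set is again pre-compact, and it is closed in the complete space $X$, hence complete, hence compact. So the whole task is to cover $K$ by finitely many $\varepsilon$-balls for a given $\varepsilon>0$.

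Fix $\varepsilon>0$. First use uniform convergence to choose $N\in\N$ with $\sup_{x\in M} d(\phi_n(x),\phi(x))<\varepsilon/4$ for all $n\ge N$. Then for all $n\ge N$ and $x\in M$ the triangle inequality gives
\[
   d\bigl(\phi_n(x),\phi_N(x)\bigr)<\varepsilon/2.
\]
Thus every point of $\phi_n[M]$ with $n\ge N$ lies within $\varepsilon/2$ of a point of $\phi_N[M]$.

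Next, since $\phi_N[M]$ is pre-compact, choose a finite set $G_N\subseteq X$ with $\phi_N[M]\subseteq\bigcup_{r\in G_N}B(r,\varepsilon/2)$. It follows that
\[
   \bigcup_{n\ge N}\phi_n[M]\subseteq\bigcup_{r\in G_N}B(r,\varepsilon).
\]
For the finitely many indices $n<N$, pre-compactness of each $\phi_n[M]$ yields finite sets $G_n$ with $\phi_n[M]\subseteq\bigcup_{r\in G_n}B(r,\varepsilon)$. The finite set $G\coloneqq\bigcup_{n\le N}G_n$ then satisfies $K\subseteq\bigcup_{r\in G}B(r,\varepsilon)$. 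One minor point: \cref{rem:preccom} phrases total boundedness with centres inside the space itself. Centres in $X$ suffice, since we can pass to $2\varepsilon$-balls centred at points of $K$, which is standard.

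No step is a real obstacle. The only care needed is to compare with a fixed $\phi_N$ rather than with $\phi$, since $\phi[M]$ is not assumed pre-compact. We also need the remark that the closure of a totally bounded set is totally bounded, which follows from enlarging the radii slightly.
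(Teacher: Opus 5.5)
Your proof is correct and follows essentially the same route as the paper. Both arguments reduce, via completeness, to total boundedness of $\bigcup_n\phi_n[M]$, cover the tail using uniform closeness, and treat the finitely many remaining indices separately. The only difference is that the paper first proves $\phi[M]$ is pre-compact and compares the tail with $\phi$, whereas you compare with a fixed $\phi_N$; this uses only the uniform Cauchy property and saves that intermediate step.
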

\begin{proof} Before we come to the proof, we require the following observation \ref{it:Balls}.
  \begin{enumerate}[label = (\roman*), leftmargin = 5ex]
          \item\label{it:Balls} Let $\varepsilon>0$. Let $\psi_1,\psi_2\colon M\to X$ and $G\subseteq X$ with $ \psi_1[M]\subseteq \bigcup_{r\in G} B(r,\varepsilon)$. If $\sup_{x\in X} d_X\bigl(\psi_1(x),\psi_2(x)\bigr)\leq \varepsilon$, then
          \[
          \psi_2[M]\subseteq \bigcup_{r\in G} B(r,2\varepsilon).
          \]
    \item Next, we claim that $\phi[M]$ is pre-compact.

          Let $\varepsilon>0$. Then uniform convergence yields $n\in \N$ s.t.\ $d_X\bigl(\phi(x),\phi_n(x)\bigr)< \varepsilon$ for all $x\in M$. Since $\phi_n[M]$ is pre-compact, we find $G\subseteq X$ finite with $ \phi_n[M]\subseteq \bigcup_{r\in G} B(r,\varepsilon)$. Hence, by our previous observation applied to $\psi_1=\phi_n$ and $\psi_2=\phi$,
\[
\phi[M]\subseteq \bigcup_{r\in G} B(r,2\varepsilon)
\]
and $\phi[M]$ is pre-compact.

\item Next, we show the actual claim. Note that both the finite union of pre-compact sets and the closure of a pre-compact set is pre-compact again. Hence, by \cref{rem:preccom} and since closed subsets of complete metric spaces are complete, it suffices to show that $\bigcup_{n\geq n_0} \phi_n[M]$ is pre-compact for some $n_0\in \N$. For this let $\varepsilon>0$. Since $\phi[M]\subseteq X$ is pre-compact, we find $G\subseteq X$ finite with $\phi[M]\subseteq \bigcup_{r\in G} B(r,\varepsilon)$.  By uniform convergence, we find $n_0\in \N$ s.t.\ for all $n\geq n_0$ and $x\in M$ we have
\[
 d_{X}\bigl(\phi(x),\phi_n(x)\bigr)<\varepsilon.
\]
By observation \ref{it:Balls}, applied to $\psi_1=\phi$ and $\psi_2=\phi_n$ for any $n\geq n_0$, we deduce
\[
\phi_n[M]\subseteq \bigcup_{r\in G} B(r,2\varepsilon); \text{ hence, } \bigcup_{n\geq n_0} \phi_n[M]\subseteq \bigcup_{r\in G} B(r,2\varepsilon)
\]and the claim follows.\qedhere
\end{enumerate}
\end{proof}

\begin{lemma}\label{lem:easycons}
Let  $X$ be a compact topological space, $Y,Z$ be metric spaces, $Y$ complete and let $(u_n)_n \in \mathcal{C}(X;Y)^{\N}$ converge to $u\in \mathcal{C}(X;Y)$, where the metric in $\mathcal{C}(X;Y)$ is given by
\[
    d_{\mathcal{C}(X;Y)} (v,w)\coloneqq \sup_{x\in X} d_Y\bigl(v(x),w(x)\bigr).
\]
If $F\colon Y\to Z$ is continuous, then $F\circ u_n \to F\circ u$ in $\mathcal{C}(X;Z)$.
\end{lemma}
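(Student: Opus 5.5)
The key observation is that uniform convergence in $\mathcal{C}(X;Y)$ reduces to uniform continuity of $F$ on a compact set, and compactness is supplied by \cref{lem:ubauto}. First I will note that $u[X]$ and each $u_n[X]$ are compact, hence pre-compact, as continuous images of the compact space $X$. Since $u_n\to u$ uniformly, \cref{lem:ubauto} (with $M=X$, $\phi_n=u_n$, $\phi=u$, and $Y$ complete) shows that
\[
  K\coloneqq \overline{\bigcup_{n\in\N} u_n[X]}\subseteq Y
\]
is compact. Moreover $u[X]\subseteq K$, since each $u(x)$ is the limit of the points $u_n(x)$.

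Next, $F$ restricted to the compact metric space $K$ is uniformly continuous (Heine--Cantor). Fix $\varepsilon>0$ and choose $\delta>0$ such that $d_Y(y,y')<\delta$ with $y,y'\in K$ implies $d_Z(F(y),F(y'))<\varepsilon$. By uniform convergence there is $n_0$ such that $\sup_{x\in X}d_Y(u_n(x),u(x))<\delta$ for all $n\ge n_0$. Because both $u_n(x)$ and $u(x)$ lie in $K$, we get $d_Z(F(u_n(x)),F(u(x)))<\varepsilon$ for all $x\in X$ and all $n\geq n_0$. Hence $d_{\mathcal{C}(X;Z)}(F\circ u_n,F\circ u)\le\varepsilon$.

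Finally, $F\circ u_n$ and $F\circ u$ are continuous as compositions of continuous maps, so they indeed belong to $\mathcal{C}(X;Z)$, and the supremum is finite since $X$ is compact.

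The only real obstacle is that $F$ need not be uniformly continuous on all of $Y$. This is precisely what the compactness of $K$ from \cref{lem:ubauto} resolves, and it is the reason the completeness of $Y$ is assumed.
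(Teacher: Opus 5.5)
Your proof is correct and follows essentially the same route as the paper: compactness of $\overline{\bigcup_n u_n[X]}$ via \cref{lem:ubauto}, uniform continuity of $F$ on that compact set, and then the $\varepsilon$--$\delta$ estimate. The paper splits this into two steps, first treating a uniformly continuous $F$ and then reducing the general case to it by restricting $F$ to the compact set; you have merged these into one argument.
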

\begin{proof}
  We proceed in two steps:
  \begin{enumerate}[label = (\roman*), leftmargin = 5ex]
    \item\label{it:FirstStep} At first we show the claim under the additional assumption that $F$ is uniformly continuous. Let $\varepsilon>0$. Then we find $\delta>0$ such that whenever $y_1,y_2\in Y$ satisfy $d_Y(y_1,y_2)\leq \delta$, we infer $d_Z\big(F(y_1),F(y_2)\big)\leq \varepsilon$. By assumption, there is $n_0\in \N$ s.t.\ for all $n\geq n_0$, we have
\[
   d_{\mathcal{C}(X;Y)} (u_n,u) = \sup_{x\in X} d_Y\bigl(u_n(x),u(x)\bigr)\leq \delta.
\]
Hence, for $n\geq n_0$
\[
    d_{\mathcal{C}(X;Z)} (F\circ u_n,F\circ u)=\sup_{x\in X} d_Z\bigl(F(u_n(x)),F(u(x))\bigr)\leq \varepsilon.
\]
\item For the general case, as $X$ is compact, it follows that $u_n[X]\subseteq Y$ is compact. In particular, it is pre-compact by \cref{rem:preccom} for all $n\in \N$. Since $u_n\to u$ uniformly and $Y$ is complete, it follows that
\[
   \widetilde{Y}\coloneqq \overline{\bigcup_{n\in \N}u_n[X]} \subseteq Y
\]is compact. Note in passing that $u[X]\subseteq \widetilde{Y}$. Finally, $\widetilde{F}\coloneqq F|_{\widetilde{Y}}$ is uniformly continuous as it is a continuous mapping of metric spaces defined on a compact set. Moreover, for all $n\in \N$, $F\circ u_n = \widetilde{F}\circ u_n$ and $F\circ u=\widetilde{F}\circ u$, by construction. Hence, \ref{it:FirstStep} applies to $\widetilde{F}\circ u_n$ and the claim follows.\qedhere
\end{enumerate}
\end{proof}

Next, we provide some results anticipating the subtleties of discontinuity of the intersection for Hausdorff-continuity.

\begin{lemma}\label{lem:chinnonempty} Let $F\colon H\to \mathcal{P}(H)$ be bcc and Hausdorff-continuous, $y\in H$, $\varepsilon>0$. Then there is $\delta>0$ s.t.\ for all $z\in B(y,\delta)$ and $x\in F(y)$:
\[
     B(x,\varepsilon)\cap F(z) \neq \emptyset.
\]
\end{lemma}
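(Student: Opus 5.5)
This follows directly from Hausdorff-continuity of $F$ at $y$, with no compactness or convexity needed. By \cref{def:Hausdorff}, $F$ is continuous from $(H,\|\cdot\|)$ into $(\mathcal{BC}(H),\Hd)$. Given $\varepsilon>0$, choose $\delta>0$ such that $\Hd\bigl(F(y),F(z)\bigr)<\varepsilon$ for all $z\in B(y,\delta)$. This $\delta$ depends only on $y$ and $\varepsilon$, not on the point $x\in F(y)$, which is exactly the uniformity the statement asks for.

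Now fix $z\in B(y,\delta)$ and $x\in F(y)$. The first term in the maximum defining $\Hd$ gives
\[
  \inf_{w\in F(z)}\norm{x-w}\leq \sup_{a\in F(y)}\inf_{b\in F(z)} \norm{a-b}\leq \Hd\bigl(F(y),F(z)\bigr)<\varepsilon .
\]
Since $F(z)$ is nonempty (bcc), the infimum is a real number strictly below $\varepsilon$. By the definition of the infimum there is $w\in F(z)$ with $\norm{x-w}<\varepsilon$, so $w\in B(x,\varepsilon)\cap F(z)$ and the intersection is nonempty.

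The only point needing care is the strict inequality. We must choose $\delta$ so that $\Hd<\varepsilon$ holds strictly, rather than merely $\leq\varepsilon$, because $B(x,\varepsilon)$ is the open ball and the infimum need not be attained. Continuity of $F$ at $y$ allows this choice directly. Closedness and convexity of the values play no role here; they matter only in later lemmas where projections onto $F(z)$ are used.
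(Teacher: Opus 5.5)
Your proof is correct and is essentially the paper's argument. The paper chooses $\delta$ so that $\Hd\bigl(F(y),F(z)\bigr)<\nicefrac{\varepsilon}{2}$ and then takes a point of $F(z)$ within $\nicefrac{\varepsilon}{2}$ of $x$, whereas you handle the strict inequality directly at level $\varepsilon$; the difference is purely cosmetic.
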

\begin{proof}
Since $F$ is Hausdorff-continuous, we find $\delta>0$ s.t.\ for all $z\in B(y,\delta)$, we have $\Hd(F(y),F(z))< \nicefrac{\varepsilon}{2}$. Let $x\in F(y)$. Then $\dist\bigl(x,F(z)\bigr)\leq  \Hd\bigl(F(y),F(z)\bigr)$ and we find  $x' \in F(z)$ such that
\[
   \|x'-x\|\leq \nicefrac{\varepsilon}{2}.
\]
In particular, $x' \in B(x,\varepsilon)\cap F(z)$.
\end{proof}
The next result is an infinite-dimensional analogue of linear regularity for convex sets, see, e.g., \cite{BB96}. 
\begin{lemma}\label{lem:slaterinequ} Let $H$ be a Hilbert space, $A,B\subseteq H$ closed, convex and bounded. Assume there exists $x_0\in A\cap B$ and $\rho>0$ s.t.\ $B[x_0,\rho]\subseteq B$.
Then for all $x\in H$
\[
   \dist(x,A\cap B)\leq\big(1+ \tfrac{d}{\rho}\big) \big(\dist(x,A)+\dist(x,B)\big),
\]
where $d\coloneqq \diam(A\cup B)$.
\end{lemma}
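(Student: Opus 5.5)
The plan is a direct construction: starting from the nearest point of $A$ to $x$, I will build an explicit point of $A\cap B$ and estimate its distance to $x$. The interior ball $B[x_0,\rho]\subseteq B$ is used to push a point of $A$ that lies only \emph{near} $B$ into $B$, by a convex combination with $x_0$ that stays inside $A$.

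Fix $x\in H$ and set $\alpha\coloneqq\dist(x,A)$, $\beta\coloneqq\dist(x,B)$. Since $A,B$ are non-empty, closed and convex, the metric projections of \cref{rem:metrproj} are available. Let $a\coloneqq p_A(x)\in A$, so $\|x-a\|=\alpha$. Then
\[
\dist(a,B)\leq \|a-x\|+\dist(x,B)\leq \alpha+\beta\eqqcolon s.
\]
Let $b'\coloneqq p_B(a)\in B$, so $\|a-b'\|\leq s$. If $s=0$, then $x\in A\cap B$ and there is nothing to prove, so assume $s>0$.

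Put $\lambda\coloneqq \frac{\rho}{\rho+s}\in(0,1)$ and define
\[
y\coloneqq \lambda a+(1-\lambda)x_0.
\]
\begin{itemize}
\item \emph{$y\in A$:} it is a convex combination of $a,x_0\in A$.
\item \emph{$y\in B$:} this is the key step. Write
\[
y=\lambda b' + (1-\lambda)\Bigl(x_0+\tfrac{\lambda}{1-\lambda}(a-b')\Bigr).
\]
Since $\frac{\lambda}{1-\lambda}=\frac{\rho}{s}$, we get $\bigl\|\frac{\lambda}{1-\lambda}(a-b')\bigr\|\leq \frac{\rho}{s}\,s=\rho$. Hence the bracketed point lies in $B[x_0,\rho]\subseteq B$. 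So $y$ is a convex combination of two points of $B$, and convexity of $B$ gives $y\in B$.
\end{itemize}

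Finally I estimate the distance. Since $a,x_0\in A\subseteq A\cup B$, we have $\|a-x_0\|\leq d$. Therefore
\[
\dist(x,A\cap B)\leq \|x-y\|\leq \|x-a\|+(1-\lambda)\|a-x_0\|
\leq \alpha+\tfrac{s}{\rho+s}\,d\leq \alpha+\tfrac{d}{\rho}(\alpha+\beta),
\]
which is at most $\bigl(1+\frac{d}{\rho}\bigr)(\alpha+\beta)$.

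The only non-routine point is the choice of the weight $\lambda$ and the decomposition showing $y\in B$. The weight must be large enough that the defect $a-b'$, rescaled by $\frac{\lambda}{1-\lambda}$, fits into the interior ball around $x_0$, yet close enough to $1$ that $y$ stays within $O(s\,d/\rho)$ of $a$. Everything else is the triangle inequality. Boundedness of $A$ and $B$ enters only through $d<\infty$.
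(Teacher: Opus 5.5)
Your proof is correct and is essentially the paper's argument: project $x$ onto $A$, then move toward $x_0$ with weight $\mu=s/(\rho+s)$ on $x_0$ (the paper uses $\dist(a,B)/(\dist(a,B)+\rho)$), so that the resulting point lies in $(1-\mu)B+\mu B[x_0,\rho]\subseteq B$. The only difference is cosmetic. You exhibit this membership directly via $b'=p_B(a)$, whereas the paper computes the distance to this Minkowski combination as an infimum and then uses closedness of $B$.
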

\begin{proof}
Let $x\in H$ and define $a\coloneqq p_A(x)$, the metric projection of $x$ onto $A$. It follows that $\dist(x,A)=\|x-a\|$. For $\lambda\in [0,1]$ define
\[
   z_\lambda \coloneqq (1-\lambda )a+ \lambda x_0 \in A,
\]
by convexity of $A$ and introduce
\[
   B_\lambda \coloneqq (1-\lambda) B + \lambda B[x_0,\rho].
\]
Since $B[x_0,\rho]\subseteq B$ and $B$ is convex, we infer $B_\lambda\subseteq B$. Next,
we compute
\begin{align*}
  \dist(z_\lambda,B) & \leq \dist(z_\lambda,B_\lambda) \\
  & = \inf_{b\in B, u\in B[0,1]} \norm[\big]{z_\lambda - \big((1-\lambda)b +\lambda (x_0 +\rho u)\big)} \\
    & = \inf_{b\in B, u\in B[0,1]} \norm[\big]{(1-\lambda )a+ \lambda x_0 - \big((1-\lambda)b +\lambda (x_0 +\rho u)\big)} \\
    & = \inf_{b\in B, u\in B[0,1]} \| (1-\lambda )(a-b) - \lambda \rho u\|.
\end{align*}
For fixed $b\in B$, if $\norm{(1-\lambda)(a-b)}\leq \lambda \rho$, then
\[
   \inf_{u\in B[0,1]} \norm[\big]{(1-\lambda )(a-b) - \lambda \rho u} =0.
\] If $\norm{(1-\lambda)(a-b)}> \lambda \rho$, then solving the minimisation problem for fixed $b$ in $u$ yields
\begin{align*}
 \inf_{u\in B[0,1]} \norm[\big]{(1-\lambda )(a-b) - \lambda \rho u} & = \norm[\Big]{ (1-\lambda )(a-b) - \lambda \rho \tfrac{(a-b)}{\|a-b\|}} \\
 & =  (1-\lambda )\|a-b\| - \lambda \rho.
\end{align*}
Thus, in any case,
\[
  \dist(z_\lambda,B) \leq \inf_{b\in B}\max\big\{0,  (1-\lambda )\|a-b\| - \lambda \rho\big\} = \max\big\{0,(1-\lambda)\dist(a,B)-\lambda\rho\big\}.
\]
Choose $0<\lambda= \frac{\dist(a,B)}{\dist(a,B)+\rho}<1$. Then $z_\lambda \in B$. Hence, we compute
\begin{align*}
   \dist(x,A\cap B) & \leq \|x-z_\lambda\|\leq \|x-a\|+ \|a-z_\lambda\| \\
                    & = \dist(x,A)+\lambda\|a-x_0\| \\
                    & = \dist(x,A)+\frac{\dist(a,B)}{\dist(a,B)+\rho}\|a-x_0\| \\
                    & \leq \dist(x,A)+\frac{\dist(a,B)}{\dist(a,B)+\rho}d; \\
\intertext{with $\|a-x\|=\dist(x,A)$ and, thus, $\dist(a,B)\leq \dist(x,A)+\dist(x,B)$ we obtain}
                    & \leq \dist(x,A)+\frac{\dist(x,A)+\dist(x,B)}{\dist(a,B)+\rho}d \\
                    & \leq  \big(1+\tfrac{d}{\rho}\big)\big(\dist(x,A)+\dist(x,B)\big).\qedhere\end{align*}
\end{proof}
In certain situations, the intersection is continuous w.r.t.\ Hausdorff-convergence, as the next result confirms.
\begin{lemma}\label{lem:hdcont} Let $H$ be a Hilbert space, $(c_n)_n$ a convergent sequence in $H$, $c\coloneqq \lim c_n$, and $(B_n)_n$ a sequence of closed, bounded, convex subsets of $H$ converging to some bounded, closed, convex, non-empty $B\subseteq H$ w.r.t.\ $\Hd$, $r>0$. If $B(c,r)\cap B\neq \emptyset$, then $B[c_n,r]\cap B_n\to B[c,r]\cap B$ w.r.t.~$\Hd$.
\end{lemma}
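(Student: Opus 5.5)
We set $K_n \coloneqq B[c_n,r]\cap B_n$ and $K\coloneqq B[c,r]\cap B$. We must show that both one-sided excesses $\sup_{y\in K_n}\dist(y,K)$ and $\sup_{y\in K}\dist(y,K_n)$ tend to zero; in particular $K_n\neq\emptyset$ for large $n$. The plan is to apply \cref{lem:slaterinequ} twice, in each case with the ball playing the role of the set with nonempty interior (the set $B$ in \cref{lem:slaterinequ}) and the given convex set playing the role of $A$. The hypothesis $B(c,r)\cap B\neq\emptyset$ supplies the required Slater point. Choose $x_0\in B$ with $\|x_0-c\|<r$ and set $\rho\coloneqq \tfrac12(r-\|x_0-c\|)>0$. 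Then $B[x_0,2\rho]\subseteq B[c,r]$.

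\emph{Step 1 (uniform Slater points for $K_n$).} Since $\Hd(B_n,B)\to 0$, we can pick $x_n\in B_n$ with $\|x_n-x_0\|\le \Hd(B_n,B)+\tfrac1n\to0$. Because $c_n\to c$ as well, for $n\ge n_0$ we get $\|x_n-c_n\|\le \|x_0-c\|+\|x_n-x_0\|+\|c_n-c\|\le r-\rho$. Hence $x_n\in K_n$ and $B[x_n,\rho]\subseteq B[c_n,r]$, so $K_n\neq\emptyset$. Moreover the quantities $d_n\coloneqq\diam(B_n\cup B[c_n,r])$ are uniformly bounded by some $d$, since $B_n\to B$ in $\Hd$ with $B$ bounded and $(c_n)$ is bounded. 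So \cref{lem:slaterinequ} holds with constants $1+d/\rho$ that do not depend on $n$.

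\emph{Step 2 ($\sup_{y\in K}\dist(y,K_n)\to0$).} Let $y\in K$. Applying \cref{lem:slaterinequ} with $A=B_n$ and the ball $B[c_n,r]$ gives
\[
\dist(y,K_n)\le\bigl(1+\tfrac d\rho\bigr)\bigl(\dist(y,B_n)+\dist(y,B[c_n,r])\bigr)\le\bigl(1+\tfrac d\rho\bigr)\bigl(\Hd(B,B_n)+\|c_n-c\|\bigr).
\]
Here we used $y\in B$ and $\|y-c_n\|\le r+\|c-c_n\|$. The right-hand side does not depend on $y$, so the supremum over $y\in K$ tends to zero.

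\emph{Step 3 ($\sup_{y\in K_n}\dist(y,K)\to0$).} This is symmetric: apply \cref{lem:slaterinequ} to $A=B$ and the ball $B[c,r]$, which contains $B[x_0,2\rho]$, with the point $y\in K_n$. We have $\dist(y,B)\le\Hd(B_n,B)$ and $\dist(y,B[c,r])\le\|c_n-c\|$. This yields the same bound, and combining Steps 2 and 3 gives $\Hd(K_n,K)\to0$.

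The main obstacle is Step 1. The linear-regularity constant in \cref{lem:slaterinequ} blows up as $\rho\to0$, so we need a Slater radius that is uniform in $n$. The open-ball hypothesis $B(c,r)\cap B\neq\emptyset$ is exactly what makes this possible; if one only had $B[c,r]\cap B\neq\emptyset$, the intersection could fail to be continuous. One small point: in \cref{lem:slaterinequ} the set containing the ball must be bounded, which is true for closed balls.
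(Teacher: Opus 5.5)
Your proof is correct. Half of it coincides with the paper: your Step 3 is the paper's part (i). There one applies \cref{lem:slaterinequ} to the limit pair $B$, $B[c,r]$ at a point $y\in K_n$, using $\dist(y,B)\le \Hd(B_n,B)$ and $\dist(y,B[c,r])\le\|c_n-c\|$. Your Slater point $x_0$ with radius $2\rho$ is in fact the correct bookkeeping. The paper writes the constant as $1+\frac{d}{r}$, which tacitly takes $c$ itself as Slater point and would require $c\in B$; this is harmless for the conclusion.

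For the other half the routes differ. The paper does not invoke \cref{lem:slaterinequ} again. For $x\in K$ it shrinks towards the interior point, setting $x_\lambda=(1-\lambda)x+\lambda x_0\in B$, so that $\|x_\lambda-c\|\le r-\lambda\rho$. It then picks $z_n\in B_n$ with $\|z_n-x_\lambda\|\le\Hd(B_n,B)$ and observes that, for $n$ large and uniformly in $x$, the margin $\lambda\rho$ absorbs $\Hd(B_n,B)+\|c_n-c\|$. Hence $z_n\in K_n$ and $\dist(x,K_n)\le 2\lambda R+\Hd(B_n,B)$, and letting $\lambda\downarrow 0$ finishes the argument.

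You instead transport the Slater condition to the perturbed pair. Step 1 gives $x_n\in K_n$ with $B[x_n,\rho]\subseteq B[c_n,r]$ for a radius $\rho$ independent of $n$, together with a uniform diameter bound. You then apply linear regularity to $(B_n,B[c_n,r])$ as well. This costs the extra Step 1, but it makes the two halves symmetric. It also yields the quantitative estimate $\Hd(K_n,K)\le C\bigl(\Hd(B_n,B)+\|c_n-c\|\bigr)$ for large $n$, with $C$ independent of $n$. That is Lipschitz dependence of the intersection on the data, whereas the paper's $\lambda$-argument only gives convergence without a rate.
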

\begin{proof} We put $A_n\coloneqq B[c_n,r]$ and $A\coloneqq B[c,r]$.
 Denote $d\coloneqq  \sup_n (\diam(A_n \cup B_n))$. Since $B$ is bounded and $B_n\to B$ in the Hausdorff-metric, we find $\varepsilon>0$ s.t.\ $B_n\subseteq B + \varepsilon B[0,1]$ for eventually all $n\in \N$.  Thus, by boundedness of each individual $B_n$ and from
 \[A_n \subseteq \bigcup_{x\in H, \|x\|\leq \sup_{n\in \N} \|c_n\|} B[x,r]\subseteq B[0, r+ \sup_{n\in \N}\|c_n\|]
 \] for all $n\in \N$, we infer $d<\infty$. Note that we also showed that we find $R>0$ such that $\bigcup_n A_n\cup B_n\cup A\cup B\subseteq B[0,R]$.\\
 We define $K_n\coloneqq A_n\cap B_n$ for all $n\in \N$ and $K\coloneqq A\cap B$.
 \begin{enumerate}[label = (\roman*), leftmargin = 5ex]
   \item At first, we show that $\sup_{x_n \in K_n} \dist(x_n, K) \to 0$ as $n\to \infty$.

         For $x_n \in K_n = A_n \cap B_n$ we estimate\begin{align*}
   \dist(x_n,A)
   & = \inf_{u\in B[0,1]} \norm[\big]{x_n - (c + r u)}
    = \max\big\{ 0, \|x_n-c\|-r\big\} \\
   & \leq \max\big\{0, \|x_n - c_n\| + \|c_n - c\| - r\big\} \\
   & \leq \|c_n-c\|
\end{align*}
and $\dist(x_n,B) \leq \Hd(B_n,B)$.
Thus, by \cref{lem:slaterinequ}, we infer
\[
   \dist(x_n,K)\leq\big(1+ \tfrac{d}{r}\big)\big( \|c_n-c\|+\Hd(B_n,B)\big) \xrightarrow[n\to \infty]{} 0.
\]
   \item It remains to prove $\sup_{x \in K} \dist(x, K_n) \to 0$ as $n\to\infty$.

         Let $x_0 \in B(c,r)\cap B$ and $\rho>0$ be s.t.\ $B(x_0,\rho)\subseteq B(c,r)$. Next, let $\lambda \in (0,1]$ and define for $x\in K=A\cap B$
\[
 x_\lambda = (1-\lambda)x + \lambda x_0 \in B,
\]
by convexity of $B$.
From $\|x-c\|\leq r$ and
\begin{equation*}
  \|c-x_0\| +\rho=\norm[\Big]{c-x_0 + \tfrac{\rho(c-x_0)}{\|c-x_0\|}}\smashoperator[r]{\mathop{\leq}}_{B(x_0,\rho)\subseteq B(c,r)} r
\end{equation*}
we infer $\|x_0-c\|\leq r-\rho$. Consequently,
\[
  \|x_\lambda - c\|\leq (1-\lambda) r +\lambda r -\lambda \rho = r-\lambda \rho.
\]
For $n\in \N$, by convexity and closedness of $B_n$, we find $z_n(x)\in B_n$ s.t.
\[
  \| z_n(x) - x_\lambda \| \leq \Hd(B_n,B).
\]
Then
\begin{align*}
  \|z_n(x)-c_n\| & \leq \|z_n(x)-x_\lambda \|+ \|x_\lambda - c\|+ \|c-c_n\| \\
  & \leq \Hd(B_n,B)+ r-\lambda \rho+ \|c-c_n\| .
\end{align*}
We find $n_0\in \N$ s.t.\ $ \Hd(B_n,B)+ \|c-c_n\|\leq \lambda\rho$ for all $n\geq n_0$. Thus, for $n\geq n_0$, $ \|z_n(x)-c_n\| \leq r$ and, hence, $z_n(x)\in K_n$.

\noindent Next, as
\[
   \quad\dist(x,K_n)\leq \|x \!- \!z_n(x)\|\leq \|x \!- \!x_\lambda \| + \|x_\lambda \!- \!z_n(x)\| \leq 2\lambda R + \Hd(B_n,B),
\]
we infer
\[
   \limsup_{n\to\infty} \sup_{x\in K}  \dist(x,K_n) \leq 2\lambda R.
\]As $\lambda\in (0,1]$ was arbitrary, we deduce $\sup_{x\in K}  \dist(x,K_n)\to 0$, which proves the desired Hausdorff-convergence.\qedhere
 \end{enumerate}
\end{proof}
Before we finally turn to the proof of our main technical result on approximation of selections we recall Lusin's theorem.
\begin{theorem}[Lusin's theorem]
  \label{th:Lusin}
  Let $U\subseteq \R^d$, $d\in \N_{\geq1}$, be compact and endowed with the measure $\mu$ induced by the Lebesgue measure on $\R^{d}$. Let $X$ be a separable metric space and $f\colon U\to X$ measurable. Then
  \begin{equation*}
    \forall \epsilon \!> \!0 \,\,\exists K \!\subseteq \!U \,\text{compact}\colon \mu \bigl(U\! \setminus \! K\bigr) \!< \!\epsilon \quad\text{and}\quad f\vert_{K}\,\text{is continuous}.
  \end{equation*}
\end{theorem}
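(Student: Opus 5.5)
The plan is the classical argument through a countable base of $X$ together with inner regularity of the Lebesgue measure. Since $X$ is a separable metric space, it is second countable. Fix a countable base $(V_k)_{k\in\N}$ of open subsets of $X$. Measurability of $f$ means that preimages of open (equivalently, closed) sets are Lebesgue measurable, so each set
\[
  M_k\coloneqq f^{-1}(V_k)\subseteq U
\]
is measurable, and so is its complement $U\setminus M_k$. Since $U$ is compact, $\mu(U)<\infty$, and the Lebesgue measure is inner regular by compact sets on measurable subsets of finite measure.

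Given $\epsilon>0$, I will choose for each $k$ compact sets $C_k\subseteq M_k$ and $D_k\subseteq U\setminus M_k$ with
\[
  \mu(M_k\setminus C_k)<\epsilon 2^{-k-2}\quad\text{and}\quad \mu\bigl((U\setminus M_k)\setminus D_k\bigr)<\epsilon 2^{-k-2}.
\]
Then $\mu\bigl(U\setminus(C_k\cup D_k)\bigr)<\epsilon 2^{-k-1}$. I set
\[
  K\coloneqq \bigcap_{k\in\N}(C_k\cup D_k).
\]
This set is closed in the compact set $U$, hence compact. By countable subadditivity,
\[
  \mu(U\setminus K)\leq\sum_k\mu\bigl(U\setminus(C_k\cup D_k)\bigr)<\epsilon .
\]

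It remains to check continuity of $f|_K$. It suffices to show that $(f|_K)^{-1}(V_k)=M_k\cap K$ is relatively open in $K$ for every $k$, since every open set in $X$ is a union of the $V_k$. Because $K\subseteq C_k\cup D_k$, $C_k\subseteq M_k$ and $D_k\cap M_k=\emptyset$, we get
\[
  M_k\cap K=C_k\cap K=K\setminus D_k .
\]
Here $K\setminus D_k$ is relatively open in $K$, since $D_k$ is compact and therefore closed. Hence $f|_K$ is continuous.

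The only delicate point is the regularity step: producing compact inner approximations of both $M_k$ and its complement. This uses that Lebesgue measurable sets of finite measure can be approximated from inside by closed sets, which are compact here because they are closed subsets of the compact set $U$. This is standard, so I expect no real obstacle; the remaining care is in the bookkeeping identity $M_k\cap K=K\setminus D_k$.
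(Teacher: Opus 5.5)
Your proof is correct and takes a genuinely different route from the paper. The paper gives no self-contained argument. It cites the scalar case $f\colon[0,1]\to\R$ from Cohn and indicates that the case it actually needs ($U=[0,T]$, $X=H$ a separable Hilbert space) follows by passing to the coordinates $\langle f(\cdot),e_n\rangle$ with respect to an orthonormal basis, with Pettis' theorem ensuring that these are measurable scalar functions.

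You instead give the direct argument via a countable base $(V_k)_k$ of $X$ and compact inner approximations $C_k\subseteq f^{-1}(V_k)$, $D_k\subseteq U\setminus f^{-1}(V_k)$. Your measure estimate and the identity $f^{-1}(V_k)\cap K=K\setminus D_k$ are both right.

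What your route buys:
\begin{itemize}
\item It proves the statement in the generality in which it is formulated (any compact $U\subseteq\R^d$, any separable metric space $X$) without any reduction.
\item It avoids a subtlety the paper's sketch passes over. Applying the scalar theorem to countably many coordinates and intersecting the resulting compact sets only makes each coordinate of $f|_K$ continuous. In infinite dimensions this does not imply norm continuity. One would still need an extra step, e.g.\ Egorov's theorem applied to the partial sums $\sum_{n\leq N}\langle f,e_n\rangle e_n$, or additionally making $\|f(\cdot)\|$ continuous on $K$ and using that weak plus norm convergence gives strong convergence.
\end{itemize}

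The paper's route, on the other hand, leans on a textbook scalar result. It also makes explicit the link between Bochner measurability, the notion relevant for elements of $\Leb^2(0,T;H)$, and the Borel-type measurability your argument uses (preimages of open sets are Lebesgue measurable). For separable $H$ these coincide, and the latter is exactly the paper's definition of measurability specialised to single-valued maps. So your proof applies precisely where the paper invokes the theorem.
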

For a proof of Lusin's theorem for functions $f\colon [0,1]\to \mathbb{R}$ we refer to \cite[thm.~7.4.3]{Cohn1993}. The generalization is proven by standard tools. In our case, we will use $U=[0,1]$, and $X=H$ a Hilbert space. In this case, the corresponding result can be obtained from \cite[thm.~7.4.3]{Cohn1993} using an orthonormal basis. For this, note that, by Pettis' theorem (see, e.g., \cite[thm.~3.1.10]{Waurick2022}), a measurable $f\colon [0,1]\to H$ is characterised by being a.e.~separably valued (i.e., there is a set of measure zero $Z\subseteq [0,1]$ so that $f\big[[0,1]\!\setminus \!Z\big]$ is separable) and weakly measurable (i.e., for all $x'\in H$, $\langle f(\cdot),x'\rangle$ is measurable; hence admitting a Borel measurable representative).
\begin{proof}[Proof of \cref{th:CloseSelections}] Note that all mentioned sets of selections are nonempty appealing to \cref{th:SelectionBySeparability}: First observe that $\Leb^{2}(0,T;H)$ is separable since $H$ is and that Hausdorff-continuity of $F$ implies upper semicontinuity, hence implies measurability. Together these facts assure applicability of \cref{th:SelectionBySeparability}.
  
  Let $\varepsilon>0$. 
 As $u_n \to u$ uniformly (i.e., in $\mathcal{C}\bigl([0,T];H\bigr)$), by \cref{lem:easycons}, we deduce $F\circ u_n\to F\circ u$ uniformly (where $F$ takes values in $\bigl(\mathcal{BC}(H),\Hd\bigr)$). By \cref{lem:chinnonempty} (applied to $F$ and $y=u(t)$) and compactness of the closure of the union of the images of $u_n$ (see \cref{lem:ubauto}), we find $n_0\in \N$ s.t.\ for all $t\in [0,T]$ and $f\in \Sel F(u)$,  $B\big(f(t),\varepsilon\big)\cap F\big(u_n(t)\big)\neq \emptyset$ for all $n\geq n_0$. Hence, for all $t\in [0,T]$, $f\in \Sel F(u)$,
  \begin{equation*}
    \chi_{n}(t) \coloneq \bigl\{x \in F\big(u_{n}(t)\big); \norm{x - f(t)}_{H}\leq \epsilon\bigr\}
  \end{equation*}
becomes non-empty eventually for $n$ large enough (uniformly in $f$ and $t\in [0,T]$). Let $f\in \Sel F(u)$ and define $ \chi_{n}(t) $ as above.
  
  Because $F$ is bcc, the sets $\chi_{n}(t)$ are convex (as intersection of two convex sets) and (weakly) closed by Mazur's lemma, because both individual sets are strongly closed.
  
  Since $H$ is a Hilbert space, we can define the pointwise bestapproximation
  \begin{equation*}
    \pi_{n}\colon [0,T] \ni t\mapsto \mathrm{arg min}_{x \in \chi_{n}(t)}\norm{x - f(t)}_H.
  \end{equation*}
  To verify that $\pi_{n}$ is measurable, we will first show that $\pi_{n}$ is continuous when restricted to large sets, hence rendering it a measurable function on such sets. Denoting the Lebesgue measure by $\lambda$, we proceed as follows:
  \begin{enumerate}[leftmargin = 4ex, label = \arabic*.]
    \item For some $0<\delta < 1$ let $K_{1}$ be the compact set from Lusin's \cref{th:Lusin} that renders $f$ continuous on $K_{1}$ and satisfies $\lambda\big([0,T]\!\setminus \!K_{1}\big)< \delta$.
    \item We can now iteratively start with $K_{k}$ and find a compact set $K_{k+1}$ s.t.\ $f$ is continuous on $K_{k+1}$ and $\lambda\bigl([0,T]\!\setminus \!\bigcup_{1\leq l \leq k+1}K_{l}\bigr) < \delta^{k+1}$.
    \item We define $K\coloneq \bigcup_{k \in \N}K_{k}$, which is a measurable set. By construction, $\lambda\bigl([0,T]\!\setminus \!K\bigr)=0$ and $f$ is continuous on any restriction to some $K_{k}$. We note, that $K$ is dense in $[0,T]$, since otherwise $[0,T]\!\setminus \!K$ would need to contain an interval, hence having positive measure.
    \item $\chi_n|_{K_k}$ is continuous for $n$ large enough:

          By construction, for all $t\in K_k$,
          \begin{equation*}
            B\big(f(t),\nicefrac{\varepsilon}{3}\big)\cap F(u(t))\neq \emptyset.
          \end{equation*}
          Since $f|_{K_k}$, and $F \circ u$ are continuous, by \cref{lem:hdcont}, it follows that
          \begin{equation*}
            B[f(s),\nicefrac{\varepsilon}{3}]\cap F(u(s))\to B[f(t),\nicefrac{\varepsilon}{3}]\cap F(u(t)) \quad \text{as}\ s\to t.
          \end{equation*}
          Thus, there exists $\eta>0$ s.t.\ for $s\in K_k$ with $|s-t|\leq \eta$ we have
          \begin{equation*}
            B[f(s),\nicefrac{\varepsilon}{3}]\cap F(u(s))\neq \emptyset.
          \end{equation*}
          Similarly, since $F\circ u_n \to F\circ u$ uniformly, we find $n_0(t)\in \N$ s.t.
          \begin{equation*}
            B[f(s),\nicefrac{2\varepsilon}{3}]\cap F(u_n(s))\neq \emptyset
          \end{equation*}
          for all $s\in K_k$ with  $|s-t|\leq \eta$ for all $n\geq n_0$. Since $\bigl((t-\nicefrac{\eta}{2},t+\nicefrac{\eta}{2})\bigr)_{t\in K_k}$ constitutes an open cover for the compact $K_k$, we find $n_0\in \N$ s.t.\ for all $n\geq n_0$ and $s\in K_k$, $B[f(s),\nicefrac{2\varepsilon}{3}]\cap F(u_n(s))\neq \emptyset$. In particular, $\chi_n(t)\neq \emptyset$ for all $n\geq n_0$ and, by \cref{lem:hdcont},
    \[
   K_k \ni t \mapsto  B[f(t),\varepsilon]\cap F(u_n(t))=\chi_n(t)
    \]
    is continuous in the Hausdorff-distance.
    \item Next, we argue that $\pi_n|_{K_k}$ is continuous. For this, we recall from \cref{rem:metrproj} that
          \begin{alignat*}{5}
            p_C &\colon H &\to C, &\quad x &&\mapsto \mathrm{arg min}_{c \in C}\norm{c - x}_{H}
            \intertext{is Lipschitz-continuous for any fixed, nonempty, closed and convex subset $C\subseteq H$. We define}
            p_{k,C} &\colon K_{k} &\to C, &\quad t &&\mapsto p_{C}(f(t)).
          \end{alignat*}
          Note that $f[K_k]$, as the continuous image of a compact set, is bounded. Then we find $R>0$ such that
          \[
            \chi_n[K_k]\subseteq \bigcup_{y\in f[K_k]} B(y,2\varepsilon)\subseteq B[0,R].
          \]
Finally, let $t,s\in K_k$, then appealing to \cref{it:ProjEst} of \cref{rem:metrproj}, we estimate
          \begin{align*}
             \qquad\norm{\pi_n(t) \!- \!\pi_n(s)} & = \|p_{k, \chi_n(t)}(t) - p_{k,\chi_n(s)}(s)\| \\
             & \leq \|p_{k, \chi_n(t)}(t) - p_{k,\chi_n(t)}(s)\| + \|p_{k, \chi_n(t)}(s) - p_{k,\chi_n(s)}(s)\| \\
             & = \| p_{\chi_n(t)}(f(t))- p_{\chi_n(t)}(f(s))\| + \|p_{\chi_n(t)}(f(s))-p_{\chi_n(s)}(f(s))\| \\
             & \leq \|f(t)-f(s)\| + \sqrt{(4 \|f(s)\|+2R)\Hd(\chi_n(t),\chi_n(s))}.
          \end{align*}
By our previous observation, the latter tends to $0$, as $t\to s$.
          \item Iterating this process, we retain $\pi_{n}\colon [0,T]\to H$ as a pointwise limit of measurable functions, rendering it a measurable function itself (see~\cite[prop.~3.1.3]{Waurick2022}).
  \end{enumerate}
  It remains to show the desired inequality: Indeed, since $\pi_{n}$ satisfies the pointwise estimate $\norm{\pi_{n}(t)- f(t)}_{H}\leq \epsilon$ for almost all $t \in [0,T]$, for the $\Leb^{2}$-norm we obtain: $\norm{\pi_{n}-f}_{\Leb^{2}(0,T;H)}\leq \epsilon \sqrt{T}$. In particular, $\pi_{n}\in \Leb^{2}(0,T;H)$.
\end{proof}

\begin{remark}\label{rem:actualthm3.3.1} Note that a close inspection of the proof reveals that the theorem statement can be replaced by the following: For all $\varepsilon>0$ there is $\delta>0$ s.t.\ for all $u,v\in \mathcal{C}([0,T];H)$ and  $f\in \Sel F(u)$ with
\[
    \sup_{t\in [0,T]} \Hd\bigl( F (u(t)), F(v(t))\bigr)\leq \delta,
\]
we find $g\in \Sel F(v)$ s.t.\
\[
   \| f-g\|\leq \varepsilon.
\]
\end{remark}
%%%%%%%%%%%%%%%%%%%%%%%%%%%%%%
%    S E C T I O N
\section{Uniform convergence of solutions}
\label{sec:UniformConvergence}

To establish local well-posedness of the coupled problem \eqref{eq:IVP}, it will become necessary to establish uniform convergence of solutions $u_{n}$ and $v_{n}$ of the decoupled problems
\[
 \begin{cases}   \tfrac{\dd}{\dx[t]}u + E u \hspace{-1.5ex}&= f_n\\
    \hfill u(0) \hspace{-1.5ex}&= u_{0}
 \end{cases}
 \quad \text{ and }\quad
 \begin{cases}     \tfrac{\dd}{\dx[t]}v + A(\argdot) v \hspace{-1.5ex}&= g_n\\
    \hfill v(0) \hspace{-1.5ex}&= v_{0}
\end{cases}
\]
associated to some suitable right-hand sides $f_{n}$ and $g_{n}$ respectively. Accordingly, we split this section into two parts.

\subsection{Uniform convergence of semigroup solutions}
\label{subsec:UniformConvergenceOfSG}
For technical reasons, we will require uniform convergence of solutions $u_{n}$ to the problem
\begin{equation}
  \label{eq:CauchyProblem}
  \left\{
  \begin{aligned}
    \tfrac{\dd}{\dx[t]}u_{n} + E u_{n} &= f_{n} &&\text{on}\ (0,T_{0}) \text{,}\\
    u_{n}(0) &= u_{0} &&\text{in}\ H\text{.}
  \end{aligned}
  \right.
\end{equation}
This problem was touched upon in \cite[sec.~3]{Simsen2024} for the special case where $-E$ is the generator of the Schr\"odinger semigroup. There it appears to have been wrongly claimed that the solutions of problem \eqref{eq:CauchyProblem} adhere to the concept of maximal regularity. This section elaborates on the subject and showcases an example, where Lipschitz-continuity of solutions fails and hence, a straigtforward application of the Arzel\`{a}--Ascoli-theorem to obtain a continuous limit solution fails. We also present a result that salvages the convergence under (necessarily) stricter assumptions.

\begin{example}
  \label{ex:Counterexample}
  Let $\Delta$ be the Dirichlet--Laplacian on $H = \Leb^{2}(0,\pi)$ with $\dom(\Delta)=\Sob^2(0,\pi)\cap \Sob_0^1(0,\pi)$. We denote by $S$ the semigroup generated by $\iu\Delta$. We construct $f\in \Leb^{2}(0,\pi)$
  so that $t\mapsto S(t)f$ is not Lipschitz-continuous.  It is well-known that the normalized eigenfunctions and eigenvalues of $\Delta$ are
  \begin{equation*}
    \phi_n(x)=\sqrt{\tfrac{2}{\pi}}\sin(nx),\qquad \lambda_n=-n^2,\qquad n\in\mathbb{N}.
  \end{equation*}
Hence, $S(t)\phi_n=e^{-\iu tn^2}\phi_n$. For $n\in \N$, define $a_n \coloneqq (1+n^2)^{-3/4}$ and let $f\coloneq \sum_{n=1}^\infty a_n\,\phi_n \in L^2(0,\pi)$. Since
  \begin{equation*}
    \sum_{n=1}^\infty \lambda_n^2 a_n^2
    =\sum_{n=1}^\infty n^4(1+n^2)^{-3/2}
    \sim \sum_{n=1}^\infty n = +\infty,
  \end{equation*}
  $f\notin \operatorname{dom}(-\Delta)$. 
  
  Let $u\coloneqq S(\cdot)f$.
  By orthogonality of the eigenbasis,
  \begin{equation*}
\norm{u(t)-u(0)}_{H}^2   = \norm{u(t)-f}_{H}^2=\sum_{n=1}^\infty |e^{-itn^2}-1|^2\,a_n^2 .
  \end{equation*}
  We find $c>0$ such that, for $\abs{\theta} \le \tfrac{1}{2}$,   \begin{equation*}
    \abs{e^{\iu\theta}-1} \ge c \abs{\theta}.
  \end{equation*}
Hence, for $\abs{t}>0$ and integers $1\le n\le N(t)\coloneqq \bigl\lfloor (2\abs{t})^{-1/2}\bigr\rfloor$, we have $\abs{t}n^2\le\tfrac{1}{2}$ and
  \begin{equation*}
    \abs{\e^{-\iu tn^2}-1}\ge c \abs{t} n^2 .
  \end{equation*}
Thus with $N(t)\simeq (2|t|)^{-1/2}$,
  \begin{align*}
   \norm{u(t)-f}_{H}^2
   & \ge c^2 \abs{t}^2 \sum_{n=1}^{N(t)} n^4 a_n^2
    = c^2 \abs{t}^2 \sum_{n=1}^{N(t)} n^4 (1+n^2)^{-3/2}\\ & \gtrsim   \abs{t}^2\sum_{n=1}^{N(t)} n \sim   \abs{t}^2 N(t)^2 \simeq \abs{t}.
  \end{align*}
Consequently,
  \begin{equation*}
    \frac{\norm{u(t)-u(0)}_{\Leb^2}}{\abs{t}} \gtrsim \abs{t}^{-1/2}\xrightarrow[t\to0]{}\infty\text{.}
  \end{equation*}
  In particular, $u$ is not Lipschitz-continuous.
\end{example}

In the following, when not expressed differently explicitly, we endow $\dom(E)$ for a closed linear operator $E$ with its graph scalar product rendering it a Hilbert space.

\begin{lemma}
  \label{th:Equicontinuity}
  Let $-E$ be the generator of a $\mathrm{C}_{0}$-semigroup $(T(t))_{t\geq 0}$ on a Hilbert space $H$ and let $(f_n)_{n\in\mathbb{N}}$ in $\dom(E)$ be bounded.\\
  Then $\big(u_n(\cdot)\big)_n \coloneqq \big(T(\cdot)f_n\big)_n$ is equicontinuous on compact time-intervals. More specifically, on compact time-intervals, all $u_n$ are Lipschitz-continuous sharing a common Lipschitz-constant.
\end{lemma}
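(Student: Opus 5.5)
The plan is to use the standard identity for $\mathrm{C}_0$-semigroups on elements of the generator's domain: for $x\in\dom(E)$ and $0\leq s\leq t$,
\[
  T(t)x - T(s)x = -\int_s^t T(r)Ex \dx[r],
\]
which holds because $r\mapsto T(r)x$ is continuously differentiable with derivative $-T(r)Ex = -ET(r)x$ (see \cite{Pazy83}). This immediately reduces the claim to a uniform bound on the integrand.

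First, fix a compact interval, without loss of generality $[0,T_0]$, and recall the exponential bound: there are $M\geq 1$ and $\omega\in\R$ with $\norm{T(r)}\leq M\e^{\omega r}$ for all $r\geq 0$. Hence $\sup_{r\in[0,T_0]}\norm{T(r)}\leq M\e^{\abs{\omega}T_0}\eqqcolon M_0$. Second, boundedness of $(f_n)_n$ in $\dom(E)$ with the graph norm gives $C\coloneqq\sup_n \norm{Ef_n}_H\leq \sup_n\norm{f_n}_{\dom(E)}<\infty$. Third, apply the identity with $x=f_n$ and estimate the Bochner integral by the integral of the norms:
\[
  \norm{u_n(t)-u_n(s)}_H \leq \int_s^t \norm{T(r)}\,\norm{Ef_n}_H \dx[r] \leq M_0 C\,\abs{t-s},
\]
for all $s,t\in[0,T_0]$ and all $n\in\N$. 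Thus every $u_n$ is Lipschitz on $[0,T_0]$ with the common constant $M_0C$, and equicontinuity follows trivially.

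I do not expect a real obstacle here. The only point that needs care is the justification of the integral identity, namely differentiability of the orbit for $x\in\dom(E)$ together with the fact that $T(r)$ commutes with $E$ on $\dom(E)$; both are standard semigroup facts, so I would simply cite them. It is worth noting, in light of \cref{ex:Counterexample}, that the domain assumption is exactly what rules out the loss of Lipschitz continuity exhibited there.
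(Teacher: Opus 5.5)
Your proposal is correct and follows the paper's proof essentially verbatim. The paper likewise writes $u_n(t)-u_n(s)=-\int_s^t T(\tau)(Ef_n)\,\mathrm{d}\tau$ for $f_n\in\dom(E)$, bounds $\norm{T(\tau)}$ uniformly on the compact interval, and uses the uniform graph-norm bound on $Ef_n$ to obtain the common Lipschitz constant.
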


\begin{proof}
  Let $n\in \N$ and $T_0>0$. For $f_n\in \operatorname{dom}(E)$, $t\mapsto u_n(t) = T(t)f_n \in \mathcal{C}^1$ satisfies
  \begin{equation*}
    u_n'(t) = \tfrac{\dd}{\dx[t]}T(t)f_n = -T(t)(E f_n).
  \end{equation*}
  Hence, for any $t,s \geq 0$, 
  \begin{equation*}
    u_n(t)-u_n(s) = \medint\int_s^t u_n'(\tau) \dx[\tau] =- \medint\int_s^t T(\tau)(E f_n) \dx[\tau] .
  \end{equation*}
As $\big(T(t)\big)_{t \in [0,T_{0}]}$ is bounded, we infer
  \begin{equation*}
    \norm{u_n(t)-u_n(s)}_{H}
    \le \medint\int_{s}^{t} \norm{T(\tau)(E f_n)}_{H} \dx[\tau]
    \leq C \medint\int_{s}^{t} \norm{E f_n}_{H}\dx[\tau] .
  \end{equation*}
  From the uniform graph-norm bound $\norm{E f_n}_{H}\le M$ for some $M\geq 0$ and all $n\in \N$ we see that
  \begin{equation*}
    \norm{u_n(t)-u_n(s)}_{H} \le CM \abs{t-s}.\qedhere
  \end{equation*}
\end{proof}

\begin{lemma}
  \label{th:ForcingTerm}
  Let $T_{0}>0$. Let $-E$ be the generator of a $\mathrm{C}_{0}$-semigroup $(T(t))_{t\geq 0}$ on the Hilbert space $H$. Let $(F_n)_{n\in\mathbb N}$ in $\Leb^{2}\big(0,T_{0};\operatorname{dom}(E)\big)$ be bounded.
  Let
  \begin{equation*}
    u_n(t) \coloneq \medint\int_0^t T(t-s)F_n(s) \dx[s],\qquad t\in[0,T_{0}].
  \end{equation*}
  Then $(u_n)_{n\in\mathbb N}$ is bounded in $\mathcal{C}^{\nicefrac{1}{2}}\big([0,T_{0}];H\big)\cap \mathcal{C}\big([0,T_{0}];\operatorname{dom}(E)\big)$. In particular, $(u_n)_{n\in\mathbb N}$ is equicontinuous.
\end{lemma}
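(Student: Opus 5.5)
We plan to prove the two bounds separately, using only the uniform boundedness of the semigroup on compact intervals, $M\coloneqq\sup_{t\in[0,T_0]}\norm{T(t)}<\infty$, together with the commutation $ET(t)x=T(t)Ex$ for $x\in\dom(E)$. Let $C_F\coloneqq\sup_n\norm{F_n}_{\Leb^2(0,T_0;\dom(E))}$. Since $F_n(s)\in\dom(E)$ for almost every $s$ and $T(t-s)$ commutes with $E$ on $\dom(E)$, the Bochner integral defining $u_n(t)$ converges in the graph norm of $\dom(E)$. Moreover, $E u_n(t)=\int_0^t T(t-s)EF_n(s)\dx[s]$, because $E$ is closed and Bochner integrals commute with closed operators. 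By Cauchy--Schwarz,
\[
\norm{u_n(t)}_{\dom(E)}\le M\sqrt{T_0}\,C_F .
\]
Continuity of $t\mapsto u_n(t)$ in $\dom(E)$ follows from \cref{th:SemiGroupSolution}. Indeed, the semigroup restricted to $\dom(E)$ is again a $\mathrm{C}_0$-semigroup with generator the part of $-E$, and $F_n\in\Leb^2(0,T_0;\dom(E))$. Hence the mild-solution formula yields an element of $\mathcal{C}([0,T_0];\dom(E))$, and the sequence is bounded there. Alternatively, one can check continuity directly with the same argument as in the remark following \cref{th:SemiGroupSolution}.

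For the Hölder bound, the key point is that $u_n$ is a strong solution. Since $F_n(s)\in\dom(E)$, the function $u_n$ is absolutely continuous with
\[
u_n(t)-u_n(s)=\int_s^t\bigl(F_n(\tau)-Eu_n(\tau)\bigr)\dx[\tau].
\]
We would justify this by the standard computation: differentiate $T(t-\sigma)$ inside the integral, or use Fubini with $T(t-\sigma)x-x=-\int_0^{t-\sigma}T(r)Ex\dx[r]$ for $x\in\dom(E)$. Then Cauchy--Schwarz on $[s,t]$ gives
\[
\norm{u_n(t)-u_n(s)}_H\le\abs{t-s}^{1/2}\bigl(\norm{F_n}_{\Leb^2(0,T_0;H)}+\norm{Eu_n}_{\Leb^2(0,T_0;H)}\bigr)\le\abs{t-s}^{1/2}\bigl(C_F+T_0^{1/2}M\sqrt{T_0}\,C_F\bigr).
\]
This uses the uniform $\dom(E)$ bound from the previous step. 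Combined with the sup bound, it gives boundedness in $\mathcal{C}^{1/2}([0,T_0];H)$, and equicontinuity follows immediately.

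The main technical step is rigorously establishing the integral identity, i.e.\ that the mild solution is a strong solution when the forcing is $\Leb^2$ with values in $\dom(E)$. We would do this via Fubini. Write
\[
\int_s^t Eu_n(\tau)\dx[\tau]=\int_s^t\int_0^\tau T(\tau-\sigma)EF_n(\sigma)\dx[\sigma]\dx[\tau]
\]
and swap the order of integration. Then use $\int_a^b T(r)Ex\dx[r]=x... $ more precisely $-\int_a^bT(r)Ex\dx[r]=T(b)x-T(a)x$, applied to $x=F_n(\sigma)$. The result is exactly $\int_s^tF_n-(u_n(t)-u_n(s))$. An alternative Hölder route avoids $Eu_n$: split
\[
u_n(t)-u_n(s)=\int_s^tT(t-\sigma)F_n(\sigma)\dx[\sigma]+\int_0^s\bigl(T(t-\sigma)-T(s-\sigma)\bigr)F_n(\sigma)\dx[\sigma].
\]
The first term is $\le M\abs{t-s}^{1/2}C_F$, and the second is $\le M\abs{t-s}\sqrt{T_0}\,C_F$, using $\norm{(T(h)-1)x}\le hM\norm{Ex}$. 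This is in fact the cleaner argument, and I would use it.
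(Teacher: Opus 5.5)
Your final (split) argument is the paper's proof. It uses the same decomposition into $\int_s^t T(t-\tau)F_n(\tau)\dx[\tau]$, bounded by Cauchy--Schwarz at order $\sqrt{t-s}$, and $\int_0^s \bigl[T(t-\tau)-T(s-\tau)\bigr]F_n(\tau)\dx[\tau]$, bounded via $\norm{T(h)g-g}\le Ch\norm{Eg}$ at order $\abs{t-s}$; the one harmless slip is that the second constant should be $M^2$ rather than $M$, since $T(t-\tau)-T(s-\tau)=T(s-\tau)\bigl(T(t-s)-1\bigr)$. Your separate argument for boundedness in $\mathcal{C}([0,T_0];\dom(E))$ — commuting $E$ with the Bochner integral and using that $T$ restricts to a $\mathrm{C}_0$-semigroup on $\dom(E)$ — is correct, and it supplies a part of the statement that the paper's proof never writes out.
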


\begin{proof}
  Let $n\in \N$, $0\le s\le t\le T_{0}$. Using Duhamel's formula we write
  \begin{align*}
    u_n(t)-u_n(s)
    &= \medint\int_0^t T(t-\tau)F_n(\tau)\dx[\tau] - \medint\int_0^s T(s-\tau)F_n(\tau)\dx[\tau]\\
    &= \underbrace{\medint\int_s^t T(t-\tau)F_n(\tau)\dx[\tau]}_{\eqcolon I_{1,n}(s,t)}
      + \underbrace{\medint\int_0^s\big[T(t-\tau)-T(s-\tau)\big]F_n(\tau)\dx[\tau]}_{\eqcolon I_{2,n}(s,t)}\text{.}
  \end{align*}
  We estimate the first term using the Cauchy--Schwarz-inequality and the boundedness of the semigroup (on the compact interval $[0,T_{0}]$):
  \begin{equation*}
    \norm{I_{1,n}(s,t)}_{H}
    \le C \medint\int_s^t \norm{F_n(\tau)}_{H}\dx[\tau]
    \le C \sqrt{t-s} \norm{F_n}_{\Leb^2(0,T_{0};H)} \le C M\sqrt{t-s}.
  \end{equation*}
  For the second term, first observe that for $g \in \operatorname{dom}(E)$, the orbits $T(\argdot)g$ are $\mathcal{C}^{1}$ with values in $H$ and for $h\geq 0$,
  \begin{equation*}
    T(h)g-g = \medint\int_0^h \tfrac{\dd}{\dx[\theta]}T(\theta)g \dx[\theta] =- \medint\int_0^h T(\theta)(Eg)\dx[\theta]\text{.}
  \end{equation*}
  Consequently,
  \begin{equation*}
    \norm{T(h)g - g}_{H} \le C \abs{h} \norm{E g}_{H}.
  \end{equation*}
  By applying this observation with $h=t-s$ and $g=T(s-\tau)F_n(\tau)$ and observing that $T(s-\tau)F_n(\tau)\in \operatorname{dom}(E)$ one infers that for a.e.\ $\tau\in[0,s]$,
  \begin{align*}
    \norm[\big]{\bigl[T(t-\tau) - T(s-\tau)\bigr] F_n(\tau)}_{H}
    &= \norm{T(t-s)g - g}_{H}\\
    &\le C \abs{t-s} \norm{E T(s-\tau) F_n(\tau)}_{H}\\
    &= C \abs{t-s} \norm{T(s-\tau)E F_n(\tau)}_{H}\\
    &\leq C^{2} \abs{t-s} \norm{E F_n(\tau)}_{H}\text{.}
  \end{align*}
  Integrating over $\tau\in[0,s]$ and using the Cauchy--Schwarz-inequality produces an estimate for $I_{2,n}$:
  \begin{equation*}
    \norm{I_{2,n}(s,t)}_{H}
    \le C^{2} \abs{t-s} \medint\int_0^s \norm{E F_n(\tau)}_{H}\dx[\tau]
    \le C^{2} \abs{t-s} \sqrt{s} \norm{E F_n}_{\Leb^2(0,T_{0};H)} .
  \end{equation*}
  Since $s\le T_{0}$ and $\norm{E F_n}_{\Leb^2(0,T_{0};H)}\le M$, we obtain
  \begin{equation*}
    \norm{I_{2,n}(s,t)}_{H} \le C^{2} M \sqrt{T_{0}} \abs{t-s}.
  \end{equation*}
  Combining the two estimates yields
  \begin{equation*}
    \norm{u_n(t)-u_n(s)}_{H} \lesssim \sqrt{t-s} + \sqrt{T_{0}} \abs{t-s},
  \end{equation*}
  uniformly in $n\in \N$. In particular, as $t\downarrow s$, the right-hand side tends to $0$, uniformly in $n\in \N$, proving equicontinuity.
\end{proof}

If we additionally assume compactness of the resolvent, we obtain a suitable compactness result.
\begin{theorem}
  \label{th:CompactnessInC}
  Let $T_{0}>0$ and $-E$ be generator of a $\mathrm{C}_{0}$-semigroup $(T(t))_{t\geq 0}$ with compact resolvent. Let $(F_n)_{n\in \N}$ in $ \Leb^{2}\big(0,T_{0};\operatorname{dom}(E)\big)$  and   
  $(f_n)_{n\in\N}$ in $\operatorname{dom}(E)$ both be bounded.\\
  Then the sequence of (mild) solutions $u_n(t)\coloneq T(t)f_n + \medint\int_0^t T(t-s)F_n(s)\dx[s]$, $t\in[0,T_{0}]$, $n\in \N$, of
  \begin{equation*}
    \left\{
    \begin{aligned}
      \tfrac{\dd}{\dx[t]}u + Eu &= F_{n},\\
      u(0)&= f_{n},
    \end{aligned}
    \right.
  \end{equation*}
  admits a convergent subsequence in $\mathcal{C}\bigl([0,T_{0}];H\bigr)$.\\
  In particular, if both $(F_n)_{n\in \N}$ in $ \Leb^{2}\big(0,T_{0};\operatorname{dom}(E)\big)$ and $(f_n)_{n\in\N}$ in $\operatorname{dom}(E)$ are weakly convergent to some $F$ and $f$ respectively, then $u_n\to u$ in $\mathcal{C}\bigl([0,T_{0}];H\bigr)$, where $u$ satisfies
  \begin{equation*}
    \left\{
    \begin{aligned}
      \tfrac{\dd}{\dx[t]}u + Eu &= F,\\
      u(0)&= f.
    \end{aligned}
    \right.
  \end{equation*}
\end{theorem}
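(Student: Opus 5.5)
We prove compactness with the Arzelà--Ascoli theorem in $\mathcal{C}\bigl([0,T_{0}];H\bigr)$. This needs two ingredients: equicontinuity of $(u_n)_n$, and relative compactness of the orbits $\{u_n(t); n\in\N\}$ in $H$ for each fixed $t\in[0,T_{0}]$.

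\emph{Equicontinuity.} Split $u_n = T(\cdot)f_n + w_n$, where $w_n(t)\coloneq\int_0^t T(t-s)F_n(s)\dx[s]$. By \cref{th:Equicontinuity}, the family $\bigl(T(\cdot)f_n\bigr)_n$ is uniformly Lipschitz on $[0,T_{0}]$. By \cref{th:ForcingTerm}, the family $(w_n)_n$ is bounded in $\mathcal{C}^{\nicefrac{1}{2}}\bigl([0,T_{0}];H\bigr)$. Hence $(u_n)_n$ is equicontinuous.

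\emph{Pointwise relative compactness.} Fix $t$ and show that $\bigl(u_n(t)\bigr)_n$ is bounded in $\dom(E)$ with the graph norm. Compactness of the resolvent means the embedding $\dom(E)\hookrightarrow H$ is compact, so this bound gives precompactness in $H$.
\begin{itemize}
  \item For the free part, $E\,T(t)f_n = T(t)Ef_n$, so $\norm{T(t)f_n}_{\dom(E)}\leq C\norm{f_n}_{\dom(E)}$.
  \item For the forced part, $\dom(E)$ is invariant under $T$ and $E$ commutes with $T$ on $\dom(E)$. Since $E$ is closed, it can be pulled into the Bochner integral, and Cauchy--Schwarz yields
  \[
  \norm{w_n(t)}_{\dom(E)}\leq C\sqrt{T_{0}}\,\norm{F_n}_{\Leb^2(0,T_{0};\dom(E))}.
  \]
  This is also the $\mathcal{C}\bigl([0,T_{0}];\dom(E)\bigr)$ bound already recorded in \cref{th:ForcingTerm}.
\end{itemize}
Arzelà--Ascoli (for Banach-space-valued functions on a compact interval) then produces a subsequence converging in $\mathcal{C}\bigl([0,T_{0}];H\bigr)$.

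\emph{Identification of the limit for the second claim.} Assume $F_n\rightharpoonup F$ in $\Leb^2\bigl(0,T_{0};\dom(E)\bigr)$ and $f_n\rightharpoonup f$ in $\dom(E)$. Set $u(t)\coloneq T(t)f+\int_0^tT(t-s)F(s)\dx[s]$; by \cref{th:SemiGroupSolution} this is the mild solution. We show $u_n(t)\rightharpoonup u(t)$ weakly in $H$ for each $t$.
\begin{itemize}
  \item For $x\in H$, the map $g\mapsto\langle T(t)g,x\rangle$ is a continuous linear functional on $\dom(E)$ (indeed on $H$), so $\langle T(t)f_n,x\rangle\to\langle T(t)f,x\rangle$.
  \item The map $G\mapsto\bigl\langle\int_0^tT(t-s)G(s)\dx[s],x\bigr\rangle=\int_0^t\langle G(s),T(t-s)^{\ast}x\rangle\dx[s]$ is a bounded linear functional on $\Leb^2(0,T_{0};H)$, hence also on $\Leb^2\bigl(0,T_{0};\dom(E)\bigr)$. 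So the forced parts converge weakly as well.
\end{itemize}
Every uniformly convergent subsequence of a subsequence therefore has limit $u$, since uniform limits coincide with pointwise weak limits. The standard subsequence argument then gives convergence of the whole sequence $u_n\to u$ in $\mathcal{C}\bigl([0,T_{0}];H\bigr)$. Bounded weakly convergent sequences are bounded, so the first part applies to every subsequence.

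\emph{Main obstacle.} The most delicate step is the pointwise $\dom(E)$-bound for the convolution term. Interchanging $E$ with the Bochner integral needs care: $F_n$ must be Bochner measurable as a $\dom(E)$-valued function, and $s\mapsto T(t-s)F_n(s)$ must be measurable in $\dom(E)$. Both follow because $T$ restricts to a $\mathrm{C}_0$-semigroup on $\dom(E)$ with the graph norm. One could alternatively just invoke the $\mathcal{C}\bigl([0,T_{0}];\dom(E)\bigr)$-boundedness statement of \cref{th:ForcingTerm}.
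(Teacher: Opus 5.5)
Your proposal is correct and follows the paper's proof essentially step for step: equicontinuity from \cref{th:Equicontinuity} and \cref{th:ForcingTerm}, then pointwise boundedness in $\operatorname{dom}(E)$ combined with the compact embedding $\operatorname{dom}(E)\hookrightarrow H$ and Arzel\`a--Ascoli, and finally identification of the limit by weakly testing the Duhamel formula together with a subsubsequence argument. Your explicit check of the pointwise $\operatorname{dom}(E)$-bound for the convolution term, where you pull the closed operator $E$ through the Bochner integral, supplies a detail that the paper only asserts.
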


\begin{proof}
  From \cref{th:ForcingTerm} and \cref{th:Equicontinuity} we infer that $(u_n)_n$ is equicontinuous in $\mathcal{C}\bigl([0,T_{0}];H\bigr)$. Moreover, since $(u_n)_n$ is bounded in $C\bigl([0,T_{0}];\operatorname{dom}(E)\bigr)$, for all $t\in [0,T_{0}]$, $(u_n(t))_n$ is bounded in $\operatorname{dom}(E) \hookrightarrow H$. In particular, because $E$ has compact resolvent, that embedding is compact, so for all $t \in [0,T_{0}]$, the set $\{u_n(t) \colon n\in \mathbb{N}\}$ is relatively compact in $H$. Thus, the Arzel\`a--Ascoli theorem (see~\cite[thm.~7.5.7]{Dieudonne1969}) yields that $(u_n)_n$ admits a convergent subsequence.
  
  The last part follows by weakly testing the solution formula for $u_n$. Then it follows that $u_n\to u$ pointwise weakly in $H$. The compactness statement of the theorem establishes the assertion upon using a subsubsequence argument.
\end{proof}

\begin{remark}\label{rem:complcont}
  Subsequently, we will apply \cref{th:CompactnessInC} (and its special case for weakly convergent sequences) for a sequence  $(F_{n})_{n}$ bounded in $\Leb^{2}(0,T_0;H)$ and a constant sequence of initial values $f_{n}= u_{0} \in H$. The considered generator $-E_{-1}$ is then the one of the extrapolated semigroup $(T_{-1}(t))_{t\geq 0}$. Note that compactness of $\dom(E)\hookrightarrow H$ implies the same for $\dom(E_{-1})=H\hookrightarrow \Sob^{-1}(E)$. As a consequence, the corresponding mild solutions (in $H$) and, at the same time, classical solutions (in $\Sob^{-1}(E)$) are relatively compact in $\mathcal{C}\big([0,T_0];\Sob^{-1}(E)\big)$. In particular, if $(F_{n})_{n}$ is weakly convergent to some $F \in \Leb^{2}(0,T_0;H)$, the corresponding limit solution belongs to $\mathcal{C}\big([0,T_0];H\big)$ by Duhamel's formula, yet it is approached by the pre-asymptotic solutions in $\mathcal{C}\big([0,T_0];\Sob^{-1}(E)\big)$.
  \end{remark}

\subsection{Uniform convergence of solutions to the monotone evolution problem}
\label{subsec:UniformConvergenceMP}

In the preceeding \cref{subsec:UniformConvergenceOfSG} we have already established a compactness result for the semigroup part of problem \eqref{eq:IVP}. For solutions to the monotone evolution problem
\begin{equation}
  \left\{
  \label{eq:MonotoneIVP}
  \begin{aligned}
    \tfrac{\dd}{\dx[t]}v + A(t)v &= g,\\
    v(0) &= v_{0},
  \end{aligned}
  \right.
\end{equation}
we quote a result establishing the compactness of the solution family. In order to do so, let $T>0$, $\mathcal{H}$ be a separable real Hilbert space and let $A(\argdot)$ be a maximal monotone operator. Problem \eqref{eq:MonotoneIVP} admits a solution $v_g\in \mathcal{C}\bigl([0,T];\mathcal{H}\bigr)$ for all $g \in \Leb^{2}\bigl(0,T;\mathcal{H}\bigr)$, and $v_{0} \in \mathcal{H}$. 

We want to make use of regularity properties of the solution operator

\begin{equation*}
  \Sigma \colon \Leb^{2}\bigl(0,T;\mathcal{H}\bigr) \to \mathcal{C}\bigl([0,T];\mathcal{H}\bigr),\quad g \mapsto v_{g},
\end{equation*}
where $v_{g}$ is the unique solution of problem \eqref{eq:MonotoneIVP} for given $g \in \Leb^{2}(0,T;\mathcal{H})$.

The compactness result we are after, states that weakly converging right-hand sides are turned into uniformly convergent solutions of problem \eqref{eq:MonotoneIVP}. We simply quote the following statement, which in itself is a corollary of \cite[thm.~4.1]{Guillaume2005}.
\begin{proposition}[{\cite[lem.~1]{SimsenKloeden2017}}]
  \label{th:UniformConvergence}
  Let $\mathcal{H}$ be a separable, real Hilbert space and $A(t)=\partial\phi^{t}$, $t>0$, where 
  $(\phi^{t})_{t>0}$ satisfies \cref{ass:A}. Let $v_{0}\in \mathcal{H}$.\\
  Then $\Sigma$ is {\em completely continuous}, that is, $\Sigma$ maps weakly convergent sequences (in $\Leb^{2}(0,T;\mathcal{H})$) to strongly convergent sequences (in $\mathcal{C}\bigl([0,T];\mathcal{H}\bigr)$).
\end{proposition}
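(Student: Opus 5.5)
The plan is to show that, for $g_n \rightharpoonup g$ in $\Leb^{2}(0,T;\mathcal{H})$, every subsequence of $(v_n)_n\coloneqq(\Sigma g_n)_n$ has a further subsequence converging in $\mathcal{C}\bigl([0,T];\mathcal{H}\bigr)$ to $\Sigma g$. Uniqueness from \cref{th:MonotoneSolution} then upgrades this to convergence of the whole sequence. Since $(g_n)_n$ is weakly convergent, it is bounded, say $\norm{g_n}_{\Leb^2}\le M$.

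Before starting, I have to flag an extra hypothesis. As written, \cref{ass:A} does not seem to suffice. The potential $\phi^t=0$ fits all the listed conditions (taking $\mathcal{D}=\mathcal{H}$, $\alpha=0$, $g_n=h_n=0$), and then $\Sigma g = v_0+\int_0^{\cdot} g$. For this map, weakly convergent data give only pointwise weakly convergent solutions. I therefore take it as an implicit standing assumption of \cite{Guillaume2005,SimsenKloeden2017} that the sublevel sets $\{w;\ \norm{w}\le r,\ \phi^t(w)\le r\}$ are relatively compact in $\mathcal{H}$, uniformly in $t\in[0,T]\setminus Z$. This holds in the variable exponent example via Rellich's embedding. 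The argument below uses this assumption.

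\textbf{Step 1 (a priori estimates).} I would derive Yotsutani's estimates for strong solutions from \cite{Yotsutani1979}, with constants depending only on $M$, $\norm{v_0}$ and the data $K_n,g_n,h_n,\alpha$ from \cref{ass:A}:
\[
 \sup_{t}\norm{v_n(t)}\le C,\qquad \sup_{t} t\,\phi^t\bigl(v_n(t)\bigr)\le C,\qquad \int_0^T t\,\norm{v_n'(t)}^2\dx[t]\le C .
\]
These come from testing the equation with $v_n$ and with $t\,v_n'$. The time dependence of $\phi^t$ is handled through the comparison elements $\widetilde w$ supplied by \cref{ass:A}(ii); this is exactly what produces the exponent $\beta$ and the bounded variation of $h_n$. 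I expect this step to be the main technical obstacle. In the writeup I would cite it from \cite[\S2]{Yotsutani1979} rather than redo it.

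\textbf{Step 2 (compactness away from and near $0$).} On $[\delta,T]$, the estimates of Step 1 together with Cauchy--Schwarz give
\[
 \norm{v_n(t)-v_n(s)}\le (C/\delta)^{1/2}\abs{t-s}^{1/2},
\]
so the family is equicontinuous there. Moreover $v_n(t)$ lies in a sublevel set of $\phi^t$ of level $C/\delta$, which is relatively compact by the standing hypothesis. Arzel\`a--Ascoli then gives a subsequence converging in $\mathcal{C}([\delta,T];\mathcal{H})$, and a diagonal argument over $\delta=1/k$ handles all of $(0,T]$ at once.

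For uniformity up to $t=0$ I would use the contraction property of monotone evolutions. Fix $w\in\mathcal{D}$ with $\norm{w-v_0}<\eta$, and let $w_n$ solve the same equation with data $g_n$ and initial value $w$. Then $\norm{v_n(t)-w_n(t)}\le\eta$ for all $t$. Since $\phi^0(w)<\infty$, Yotsutani's estimates without the weight $t$ give $\int_0^T\norm{w_n'}^2\le C_w$. Hence $\norm{v_n(t)-v_0}\le 2\eta+(C_w t)^{1/2}$ uniformly in $n$, which yields uniform equicontinuity at $0$.

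\textbf{Step 3 (identifying the limit).} Let $v$ be the uniform limit of a subsequence. On each $[\delta,T]$ we have $v_n'\rightharpoonup v'$ in $\Leb^2(\delta,T;\mathcal{H})$. Consequently $\partial\phi^{\cdot}v_n = g_n - v_n' \rightharpoonup g-v'$, while $v_n\to v$ strongly. The realization of $\partial\phi^{\cdot}$ in $\Leb^2(\delta,T;\mathcal{H})$ is maximal monotone, and maximal monotone operators are strong--weak closed. Therefore $v' + \partial\phi^{\cdot}v=g$ a.e., and together with $v(0)=v_0$ this shows $v=\Sigma g$. Uniqueness, combined with the subsubsequence argument from the first paragraph, completes the proof.
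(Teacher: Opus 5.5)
Your counterexample is correct, and it shows that the proposition as printed is false whenever $\dim\mathcal{H}=\infty$. The potential $\phi^t\equiv 0$ satisfies every item of \cref{ass:A} (take $\widetilde w=w$ in (ii)). For an orthonormal sequence $(e_n)_n$, the constant functions $g_n\coloneqq e_n$ converge weakly to $0$ in $\Leb^2(0,T;\mathcal{H})$, yet $\Sigma g_n(t)=v_0+te_n$ stays at distance $t$ from $(\Sigma 0)(t)=v_0$ for every $n$.

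The paper gives no argument of its own here; it only quotes \cite[lem.~1]{SimsenKloeden2017} (via \cite[thm.~4.1]{Guillaume2005}). So some compactness hypothesis must have been lost when the assumptions were transcribed. The same example, with $g_n$ rescaled to lie in $M$, also defeats \cref{th:ConvergenceOfSolutions}, on which the proof of \cref{th:LocalExist} rests. The hypothesis you add is essentially the ``compact type'' condition the paper mentions right after the proposition with reference to \cite{HuPapa1998}. As you say, it holds in the example of \cref{subsec:SchroedingerDebye}: $D\geq\beta>0$ turns a bound on $\phi^t(w)$ into a bound in $\mathrm{W}^{1,p^-}(\Omega)$, uniformly in $t$.

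Under that extra hypothesis your proof is correct and is a genuine, self-contained alternative to the citation. It is the classical compactness scheme for subdifferential evolutions, adapted to the time-dependent case:
\begin{enumerate}
\item weighted a priori bounds;
\item Arzel\`a--Ascoli on $[\delta,T]$, which is the only place compactness enters, and only pointwise in $t$;
\item the contraction estimate against the solution with initial value $w\in\mathcal{D}$, giving equicontinuity at $t=0$ for arbitrary $v_0\in\mathcal{H}$;
\item identification of the limit via strong--weak closedness of a maximal monotone graph, together with uniqueness and the subsequence principle.
\end{enumerate}
The citation buys brevity; your route makes visible which hypotheses are actually used.

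Two points need care when you write it out. First, Yotsutani's results are qualitative, so you must check in his proofs that the relevant constants are uniform in $n$. This applies to the weighted estimates and to the unweighted one for the initial value $w$. The latter uses $\phi^0(w)<\infty$, i.e.\ that $\phi^0$ belongs to the family with domain $\mathcal{D}$, as item (ii) of \cref{ass:A} tacitly presupposes. The constants must depend only on $\sup_n\norm{g_n}_{\Leb^2}$, on $\norm{v_0}$ (resp.\ $\phi^0(w)$), and on the data $K_k,g_k,h_k$ for a single index $k\geq\sup_{n,t}\norm{v_n(t)}$. Second, maximal monotonicity of the realization of $\partial\phi^{\cdot}$ in $\Leb^2(\delta,T;\mathcal{H})$ requires measurability of $t\mapsto(1+\lambda\partial\phi^t)^{-1}x$. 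This is not listed in \cref{ass:A}, so either derive it from item (ii) or add it to your standing hypotheses.
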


In the literature, various criteria are available to obtain a similar statement as in \cref{th:UniformConvergence}; for instance if the potentials $\phi^{t}$ are of {\em compact type}, see \cite[prop.~3.3]{HuPapa1998}, the inhomogeneity exhibits Lipschitz-properties, see \cite[thm.~6]{PapaSquared1997} or if $(A(s))_{t>0}$ generates a compact evolution process, see \cite[thm.~21]{SimsenWittbold2019}.

%%%%%%%%%%%%%%%%%%%%%%%%%%%%%%
%    S E C T I O N
\section{Existence results}

\subsection{Auxiliary results from functional analysis}\label{sec:auxfa}

Before we turn our attention to the actual existence proof, we gather some results of general nature. In order to show the existence of local solutions to problem \eqref{eq:IVP}, we aim to apply the following fixed point theorem.
\begin{theorem}[{\cite[thm.~3.5]{Vrabie1994}}]
  \label{th:FixedPointTheorem}
  Let $X$ be a Banach space and let $\emptyset \neq K \subseteq X$ be weakly compact. Let  $F\colon K \to \mathcal{P}(X)$ be bcc. If the graph of $F$ is weakly $\times$ weakly sequentially closed in $K \times X$, then $F$ admits at least one fixed point, i.e., there exists at least one $u \in K$ such that $u \in F(u)$.
\end{theorem}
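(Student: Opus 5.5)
The plan is to reduce the statement to the Kakutani--Fan--Glicksberg fixed point theorem, applied in the locally convex Hausdorff space $X_w$, i.e.\ $X$ equipped with its weak topology. I read the statement in the way Vrabie uses it in \cite{Vrabie1994}, with two standing hypotheses that are implicit in the formulation above: $K$ is convex, and $F$ maps $K$ into its nonempty subsets, i.e.\ $F(u)\subseteq K$ for all $u\in K$. Without them a fixed point cannot be expected. For instance, on $X=\R$ with $K=[0,1]$ and $F(u)=\{u+2\}$, all other hypotheses hold but there is no fixed point.

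Fan--Glicksberg needs three ingredients in $X_w$:
\begin{enumerate}[label = (\roman*), leftmargin = 5ex]
  \item $K$ is nonempty, convex and compact in $X_w$. This holds by assumption.
  \item The values $F(u)$ are nonempty, convex and closed in $X_w$. Since $F$ is bcc, each $F(u)$ is nonempty, convex and norm closed. By Mazur's lemma a norm closed convex set is weakly closed.
  \item The map $F\colon K\to\mathcal{P}(K)$ is upper semicontinuous with respect to the weak topologies on both sides.
\end{enumerate}
Once (i)--(iii) are in place, the theorem yields $u\in K$ with $u\in F(u)$.

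Most of the work is in (iii), and I would split it into two steps.

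The first step upgrades \emph{sequential} weak closedness of the graph to \emph{topological} closedness in $K\times K$ with the weak$\times$weak topology. Because of the assumption $F(K)\subseteq K$, the graph lies in $K\times K$. This set is weakly compact in the Banach space $X\times X$, and the weak topology of $X\times X$ is the product of the weak topologies. I would then invoke the Eberlein--\v{S}mulian theorem in its angelic form: every point in the weak closure of a subset of a weakly compact set is the weak limit of a sequence from that subset. Hence weak sequential closedness of the graph implies weak closedness of the graph in $K\times K$. This is the step I expect to be the main obstacle. The weak topology is not metrizable, so the result has to be quoted carefully; I would cite Whitley's proof of Eberlein--\v{S}mulian, which gives exactly this angelic property.

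The second step is the standard closed-graph criterion: a set-valued map into a compact Hausdorff space with closed graph is upper semicontinuous. The argument goes as follows.
\begin{itemize}[leftmargin = 5ex]
  \item Suppose $F$ is not usc at $u$. Then there are a weakly open $D\supseteq F(u)$ and a net $u_\alpha\to u$ in $X_w$ with points $y_\alpha\in F(u_\alpha)\setminus D$.
  \item The points $y_\alpha$ lie in the weakly compact set $K\setminus D$. Passing to a subnet, we get $y_\alpha\to y\in K\setminus D$.
  \item Closedness of the graph gives $y\in F(u)\subseteq D$, which is a contradiction.
\end{itemize}
With (i)--(iii) verified, Fan--Glicksberg applies and produces the desired fixed point.
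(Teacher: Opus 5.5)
The paper gives no proof of this statement; it is quoted from \cite[thm.~3.5]{Vrabie1994}, so there is no in-paper argument to compare against. Your proof is correct and follows the standard route for this Arino--Gautier--Penot type result:
\begin{itemize}
  \item Mazur's lemma gives weak closedness of the values.
  \item The Eberlein--\v{S}mulian theorem in its angelic form upgrades sequential closedness of the graph to closedness in $K\times K$. This form is a true, standard statement about weakly compact subsets of Banach spaces. You apply it correctly in $X\times X$, whose weak topology is the product of the weak topologies.
  \item Kakutani--Fan--Glicksberg then applies in $X$ with its weak topology.
\end{itemize}
Your second step (closed graph implies upper semicontinuity) is fine. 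It could even be skipped, since Kakutani--Fan--Glicksberg is frequently stated directly for closed-graph maps with nonempty closed convex values in a compact convex set.

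Your diagnosis of the statement is also right and worth keeping. As printed it is false: both hypotheses, that $K$ is convex and that $F(u)\subseteq K$ for all $u\in K$, are needed. Your example covers the second one. For the first, take $K=\{0,1\}\subseteq\R$ with $F(0)=\{1\}$ and $F(1)=\{0\}$; this shows that convexity of $K$ cannot be dropped either.

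Both hypotheses do hold where the paper applies the theorem. The set $M$ in \eqref{eq:M} is a product of closed balls in reflexive spaces, hence convex and weakly compact. Moreover, $\Phi$ maps $M$ into $\mathcal{P}(M)$ by \cref{th:PhiSelfmap}.
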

Next, we recall an elementary inequality.
\begin{lemma}\label{lem:elem} Let $T_0>0$, $u \in \mathcal{C}[0,T_0]$, $u\geq 0$, $h\in \Leb^1(0,T_0)$, $c\geq 0$. If, for all $0\leq t\leq T_0$,
\[
   u(t)^2 \leq c^2 + \medint\int_{0}^t h(s) u(s) \dx[s],
\]then
\[
    u(t) \leq c + \tfrac{1}{2}\medint\int_0^t h(s)\dx[s].
\]
\end{lemma}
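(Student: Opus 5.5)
This is a Bihari/Gronwall-type argument for a square-root nonlinearity. I compare $u(t)$ with the right-hand side of the hypothesis rather than with $u$ itself. The cleanest route is a regularisation: fix $\delta>0$ and set
\[
   w_\delta(t)\coloneqq \bigl(c+\delta\bigr)^2 + \medint\int_0^t \abs{h(s)} u(s)\dx[s], \qquad t\in[0,T_0].
\]
Since $u$ is continuous and $h\in \Leb^1(0,T_0)$, $w_\delta$ is absolutely continuous. It satisfies $w_\delta\geq (c+\delta)^2>0$ and $u^2\leq w_\delta$, so $u\le\sqrt{w_\delta}$. Here I first replace $h$ by $\abs{h}$, which only enlarges the right-hand side; but the target involves $h$ and not $\abs{h}$, so this needs care (see the final paragraph).

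Assume for the moment $h\geq 0$. The key step is to differentiate $\sqrt{w_\delta}$, which is legitimate because $w_\delta$ is bounded away from zero, so the chain rule for absolutely continuous functions applies. For a.e.\ $t$,
\[
   \tfrac{\dd}{\dx[t]}\sqrt{w_\delta(t)} = \frac{h(t)u(t)}{2\sqrt{w_\delta(t)}}\leq \tfrac12 h(t).
\]
The inequality uses $u\le \sqrt{w_\delta}$ and $h\ge0$. Integrating from $0$ to $t$ gives
\[
   u(t)\leq \sqrt{w_\delta(t)}\leq c+\delta+\tfrac12\medint\int_0^t h(s)\dx[s],
\]
and letting $\delta\to0$ yields the claim.

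The obstacle is a sign-changing $h$, where one cannot simply pass to $\abs{h}$ without weakening the conclusion. In that case I would work with $w(t)\coloneqq c^2+\int_0^t h u$ directly, which is still absolutely continuous and satisfies $w\geq u^2\geq 0$. On the set where $h\le 0$, $w$ is nonincreasing, and it suffices to get the bound $\frac{\dd}{\dx[t]}\sqrt{w}\le \frac12 h$ only where $h>0$. The only delicate points are where $w$ vanishes. To handle them, I would use $\sqrt{w+\delta^2}$ in place of $\sqrt{w}$. Then
\[
   \tfrac{\dd}{\dx[t]}\sqrt{w+\delta^2}=\frac{hu}{2\sqrt{w+\delta^2}}\le \tfrac12 h^+ ,
\]
since $u\le\sqrt{w}\le\sqrt{w+\delta^2}$ when $h>0$, and the derivative is $\le 0$ when $h\le 0$. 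This gives the bound with $h^+$ in place of $h$.

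Recovering $h$ itself for signed $h$ does not follow from this approach, and I expect the lemma is intended, or only used, for $h\geq 0$, as in the energy estimates that follow. I would therefore state and prove it under $h\ge0$, noting that this is the case needed.
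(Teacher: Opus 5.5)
Your proof is correct and follows the paper's route: the paper likewise differentiates $\sqrt{c^2+X}$ with $X(t)=\int_0^t h u$. However, its step $X'=hu\le h\sqrt{c^2+X}$ silently uses $h\ge 0$, and its division by $\sqrt{c^2+X}$ silently needs $c^2+X>0$, which your $\delta$-regularisation supplies when $c=0$. Your restriction to $h\ge 0$ is actually necessary, not an artefact of your method: for $c=0$, $u\equiv 0$, $h\equiv -1$ the hypothesis holds but the conclusion reads $0\le -t/2$, so for signed $h$ your $h^+$ version is the correct statement. The restriction is also harmless, since the only application, the bound for $v_g$ in \cref{th:LocalExistSingleValued}, uses $h=2\norm{g(\cdot)}_{\mathcal{H}}\ge 0$.
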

\begin{proof}
For $t\in [0,T_0]$, we introduce
\[
  X(t) \coloneqq \medint\int_{0}^t h(s) u(s) \dx[s].
\]
The corresponding weak derivative is
\[
   X'(s) = h(s) u(s) \leq h(s) \sqrt{ c^2 + X(s)}.
\]
Hence, integration of $X'(s)\big(c^2 + X(s)\big)^{-\frac{1}{2}} \leq  h(s) $ over  $s\in  [0,t]$ yields
\[
   2 \sqrt{c^2 +X(t)} - 2 c \leq \medint\int_0^t h(s) \dx[s].
\]
Finally, for $t\in [0,T_0]$ we obtain
\[
   u(t) \leq \sqrt{c^2 +X(t)} \leq c +\tfrac{1}{2} \medint\int_0^t h(s) \dx[s]. \qedhere
\]
\end{proof}

\begin{lemma}\label{lem:linapp} Let $H,K$ be Hilbert spaces, $(T_n)_n, T$ in $L(H,K)$. Assume that $T_n^{\ast} \to T^*$ in the strong operator topology. Then the family $(T_n)_n$ considered as mappings from $\bigl(B_H[0,1],\sigma(H,H')\bigr)$ to $\bigl(K,\sigma(K,K')\bigr)$ is equicontinuous.
\end{lemma}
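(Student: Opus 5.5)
We have to show that for every weakly open neighbourhood of a value $T_n x_0$ in $K$ there is a weak neighbourhood of $x_0$ in $B_H[0,1]$, \emph{independent of $n$}, which every $T_n$ maps into that neighbourhood. The plan is to reduce everything to finitely many test vectors. Then the strong convergence $T_n^\ast\to T^\ast$ makes the set of all relevant functionals relatively compact in norm, and the boundedness of the ball absorbs the resulting approximation error.

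\emph{Reduction.} It suffices to treat the weak neighbourhoods of the form
\[
\dset{y\in K}{\abs{\dualprod{y - T_n x_0}{k}}<\varepsilon}
\]
for a single $k\in K$ and $\varepsilon>0$. A general basic weak neighbourhood is a finite intersection of such sets, and we can then intersect the finitely many neighbourhoods of $x_0$ obtained for each test vector.

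\emph{Compactness of the test functionals.} Since $\dualprod{T_n x - T_n x_0}{k} = \dualprod{x-x_0}{T_n^\ast k}$, we need a weak neighbourhood $V$ of $x_0$ in $B_H[0,1]$ such that $\abs{\dualprod{x-x_0}{T_n^\ast k}}<\varepsilon$ for all $x\in V$ and all $n$. The set $M\coloneqq\{T_n^\ast k;\ n\in\N\}\cup\{T^\ast k\}$ is compact in norm, because $T_n^\ast k\to T^\ast k$; this is the only place where the hypothesis is used. Hence there are finitely many $m_1,\dots,m_N\in M$ such that every element of $M$ lies within $\varepsilon/4$ of some $m_j$. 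Concretely, take $T^\ast k$, choose $n_0$ with $\norm{T_n^\ast k - T^\ast k}<\varepsilon/4$ for all $n\geq n_0$, and add $T_1^\ast k,\dots,T_{n_0}^\ast k$.

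\emph{Choice of $V$ and the estimate.} Set
\[
V\coloneqq \dset{x\in B_H[0,1]}{\abs{\dualprod{x-x_0}{m_j}}<\varepsilon/2\ \text{for } j=1,\dots,N},
\]
which is weakly open in $B_H[0,1]$. For $x\in V$ and any $n$, pick $m_j$ with $\norm{T_n^\ast k - m_j}<\varepsilon/4$. Since $\norm{x-x_0}\le 2$, we get
\[
\abs{\dualprod{x-x_0}{T_n^\ast k}}\le \abs{\dualprod{x-x_0}{m_j}} + 2\norm{T_n^\ast k-m_j} < \varepsilon/2+\varepsilon/2=\varepsilon.
\]

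The only genuine point is this uniformity in $n$. It comes from norm-compactness of the set of tested functionals $\{T_n^\ast k\}$ combined with the boundedness of the domain $B_H[0,1]$, so that weak closeness on finitely many vectors propagates to the whole compact set. There is no real obstacle beyond noticing that only strong (not norm) convergence of the adjoints is needed, applied to each fixed $k$. Note that restricting to the unit ball is essential: on all of $H$ the factor $\norm{x-x_0}$ would be unbounded.
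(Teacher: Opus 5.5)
Your proof is correct and follows essentially the same route as the paper: strong convergence of the adjoints makes $\{T_n^\ast k;\ n\in\N\}$ norm-relatively compact, a finite net of it defines the weak neighbourhood, and the boundedness of the ball absorbs the approximation error. The only difference is that you work directly at an arbitrary $x_0$ using $\norm{x-x_0}\leq 2$, whereas the paper proves equicontinuity at $0$ (with all finitely many test vectors at once) and then appeals to linearity; that step implicitly needs the radius-$2$ ball version from \cref{rem:Tlambda}, which your direct argument makes explicit.
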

\begin{proof}
  Let $\varepsilon>0$, $G\subseteq K$ finite and define
  \[
     V\coloneqq \{ z\in K; \forall y \in G \colon |\langle z,y\rangle|<\varepsilon\}.
  \]
  By assumption, $(T_n^*y)_n$ is convergent for every $y \in G$, so that 
  \[
     M\coloneqq \overline{ \bigcup \{ T_n^*y; n\in \N, y\in G\}}\subseteq H
  \]
  is norm-compact. In particular, $M$ is totally bounded and we find $F\subseteq H$ finite s.t.
  \[
      M \subseteq \bigcup_{\phi\in F} B(\phi,\nicefrac{\varepsilon}{2}).
  \]
  For $y \in G$ and $n\in \N$ we find $\phi \in F$ such that $\|T_n^*y- \phi\|<\nicefrac{\varepsilon}{2}$. Now define the weakly open neighbourhood
  \begin{equation*}
    U\coloneqq \{ x\in H; |\langle x,\phi\rangle|<\nicefrac{\varepsilon}{2} \text{ for all }\phi \in F\}\cap B_{H}[0,1]
  \end{equation*}
  and compute for $x \in U$
  \[
     | \langle T_n x, y\rangle| \leq      | \langle  x, T_n^* y-\phi \rangle| + |\langle x,\phi\rangle|\leq \|T_n^*y-\phi\|+|\langle x,\phi\rangle|< \tfrac{\varepsilon}{2}+\tfrac{\varepsilon}{2} = \varepsilon.
  \]
  Thus $T_{n}[U]\subseteq V$ for all $n\in\N$, that is, $(T_n)_n$ is equicontinuous at $0$, which by linearity is enough to establish equicontinuity everywhere.
\end{proof}
\begin{remark}\label{rem:Tlambda}\phantom{.}
  \begin{enumerate}[label = (\roman*), leftmargin = 5ex]
    \item The result in \cref{lem:linapp} also applies to nets $(T_\lambda)_{\lambda\geq \lambda_0}$ for $\lambda\to \infty$ as long as $[\lambda_0,\infty) \ni \lambda \mapsto T_\lambda^* \in \mathcal{L}(K,H)$ is norm-continuous. Indeed, in that case, $M$ in the proof can be chosen to be the union of $M_y\coloneqq \overline{\{ T_\lambda^* y; \lambda \geq \lambda_0\}}$ over $y\in G$. By continuity of $\lambda\mapsto T_{\lambda}^*$, on bounded, closed intervals, the considered orbits are compact; the limiting behaviour ensures compactness of $M_y$ and, hence, of $M$. The remaining steps of the proof are similar.
    \item It is easy to see that the statement in \cref{lem:linapp} remains true, if $B_H[0,1]$ is replaced by $B_H[0,R]$ for any $R>0$.
  \end{enumerate}
\end{remark}

\begin{lemma}\label{lem:weakstrong} Let $X$ be a metric space and $H,\mathcal{H}$ Hilbert spaces with $H$ being separable. Let $F\colon H\times \mathcal{H}\to X$ be continuous and assume
  \begin{itemize}[leftmargin = 4ex]
    \item for all $R>0$ and all $y \in B_{\mathcal{H}}[0,R]$ the function
          \[
            F_y \colon \bigl(B_{H}[0,R],\sigma(H,H')\bigr)\to \mathcal{P}(X), \quad x\mapsto F(x,y)
          \]
          is continuous.
    \item $[0,\infty)\ni \lambda\mapsto S_\lambda \in \mathcal{L}(H)$ is continuous.
    \item $S_\lambda^* \to\colon S^* \in \mathcal{L}(H)$ in the strong operator topology as $\lambda\to \infty$.
  \end{itemize}
  Then, for all $R>0$ and $\mathcal{K} \subseteq \mathcal{H}$ compact, we obtain for $S\coloneq S^{\ast\ast}$
\[
 \forall\varepsilon\!>\!0 \,\exists \lambda_0 \!> \!0 \,\forall \lambda \!\geq \!\lambda_0, x \!\in \!B_{H}[0,R], y \!\in  \!\mathcal{K} \colon \Hd\bigl(F(S_\lambda x, y),F(Sx,y)\bigr)\leq \varepsilon.
\]
\end{lemma}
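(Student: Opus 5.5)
The plan is a compactness argument on the product $B_H[0,R]\times\mathcal{K}$, where the ball carries the weak topology and $\mathcal{K}$ the norm topology. This product is compact: the weak ball because $H$ is a Hilbert space, and $\mathcal{K}$ by assumption. Since $H$ is separable, the weak topology on $B_H[0,R]$ is metrizable, so sequential arguments are available. I read the first hypothesis as continuity of $F_y$ into $(X,d_X)$; in the application $X=(\mathcal{BC}(\cdot),\Hd)$, which is why the conclusion is phrased with $\Hd$. I will write $d$ for the metric of $X$.

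The first step is to upgrade the separate weak continuity of $F_y$, together with the norm continuity of $F$, to \emph{joint} continuity of $(x,y)\mapsto F(x,y)$ on $\bigl(B_H[0,R],\sigma(H,H')\bigr)\times\mathcal{K}$. This is the step I expect to be the main obstacle. The hypotheses only say that each $F_y$ is weakly continuous, and norm continuity in $y$ need not be uniform over the weak ball. If the hypothesis can be read as joint continuity (as the paper's notion of weak Hausdorff-continuity in the first variable presumably intends), this step is free. Otherwise I would try to prove it through uniform continuity in $y$: since $B_H[0,R]\times\mathcal{K}$ is not norm compact, I would first attempt equicontinuity of $\{F(\cdot,y)\}_{y\in\mathcal{K}}$ via an $\varepsilon/3$-net of $\mathcal{K}$. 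If that fails, I would simply adopt joint continuity as the intended hypothesis. Once joint continuity holds, $F$ is uniformly continuous on this compact metrizable space with respect to a metric $\varrho$ inducing the weak topology on $B_H[0,R]$, for instance $\varrho(x,x')=\sum_k 2^{-k}\abs{\langle x-x',e_k\rangle}$ for an orthonormal basis $(e_k)_k$, plus the norm on $\mathcal{K}$.

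The second step uses \cref{lem:linapp} in the version of \cref{rem:Tlambda}, which applies because $\lambda\mapsto S_\lambda$ is norm continuous and $S_\lambda^*\to S^*$ strongly. Applied to the differences $T_\lambda\coloneqq S_\lambda-S$, whose adjoints converge strongly to $0$, the lemma gives equicontinuity, and hence uniform-in-$x$ weak convergence on bounded sets: for every finite $G\subseteq H$ and $\eta>0$ there is $\lambda_0$ with $\abs{\langle (S_\lambda-S)x,g\rangle}<\eta$ for all $x\in B_H[0,R]$, $g\in G$ and $\lambda\ge\lambda_0$. One can see this directly as well, since $\langle (S_\lambda-S)x,g\rangle=\langle x,(S_\lambda^*-S^*)g\rangle$ is bounded by $R\norm{(S_\lambda^*-S^*)g}$. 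Taking $G=\{e_1,\dots,e_N\}$ with $N$ large, and using the uniform bound $\sup_{\lambda\ge0}\norm{S_\lambda}<\infty$ from the uniform boundedness principle, gives $\sup_{x\in B_H[0,R]}\varrho(S_\lambda x,Sx)\to0$. Here $R'\coloneqq R\sup_\lambda\norm{S_\lambda}$ bounds both $S_\lambda x$ and $Sx$, so I apply the first step on $B_H[0,R']\times\mathcal{K}$ instead.

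Finally, fix $\varepsilon>0$ and choose $\delta>0$ from uniform continuity of $F$ on $B_H[0,R']\times\mathcal{K}$, so that $\varrho(x_1,x_2)<\delta$ implies $d\bigl(F(x_1,y),F(x_2,y)\bigr)\le\varepsilon$ for all $y\in\mathcal{K}$. Then choose $\lambda_0$ from the second step so that $\varrho(S_\lambda x,Sx)<\delta$ for all $\lambda\ge\lambda_0$ and $x\in B_H[0,R]$. Together these give the claimed uniform estimate $\Hd\bigl(F(S_\lambda x,y),F(Sx,y)\bigr)\le\varepsilon$.
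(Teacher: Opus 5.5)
\paragraph{Relation to the paper's proof.} Your route is the paper's route. The paper also bounds $\bigcup_{\lambda\ge0}S_\lambda[B_H[0,R]]$ by some $R'$ via uniform boundedness, and uses \cref{lem:linapp} for the uniform weak convergence $S_\lambda x\to Sx$. Your direct estimate $\abs{\langle (S_\lambda-S)x,g\rangle}\le R\norm{(S_\lambda^*-S^*)g}$ does the same job more simply. The paper then metrises $(B_H[0,R'],\sigma(H,H'))$ and concludes with \cref{lem:easycons}, which amounts to uniform continuity of $F$ on the compact space $(B_H[0,R'],\sigma(H,H'))\times\mathcal{K}$.

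The step you single out as the obstacle is exactly the one the paper settles in one sentence. It infers joint continuity on that product from norm continuity of $F$ plus weak continuity of each $F(\cdot,y)$, and that inference is invalid. So, as a proof of the statement as written, your proposal has a genuine gap. Neither your $\varepsilon/3$-net idea nor any other argument can close it, because the statement is false under the stated hypotheses.

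\paragraph{Counterexample.} Let $H=\ell^2$ with standard basis $(e_k)_{k\ge1}$, and $\mathcal{H}=X=\R$ (equivalently, singletons in $(\mathcal{BC}(\R),\Hd)$). Let $\psi_k\colon\R\to[0,1]$ be continuous, supported in $[\frac{1}{k+1},\frac1k]$, vanishing at the endpoints, with $\psi_k(t_k)=1$. Set $F(x,y)\coloneqq\sum_k\psi_k(y)\langle x,e_k\rangle^2$.
\begin{itemize}
\item For fixed $y$ at most one summand is nonzero, so every $F(\cdot,y)$ is weakly continuous on bounded sets.
\item $F$ is norm continuous. Away from $y=0$ it is locally a finite sum. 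Near $(x,0)$ only large $k$ contribute, and there $\abs{\langle x',e_k\rangle}\le\abs{\langle x,e_k\rangle}+\norm{x'-x}$ is small.
\item With $V$ the right shift, let $S_\lambda$ interpolate linearly between $V^n$ and $V^{n+1}$ for $\lambda\in[n,n+1]$. Then $\lambda\mapsto S_\lambda$ is norm continuous and $S_\lambda^*\to0$ strongly, so $S=0$.
\end{itemize}
Take $R=1$, $\mathcal{K}=\{0\}\cup\{t_k;k\in\N\}$, $x=e_1$ and $y=t_{n+1}$. Then $F(S_ne_1,t_{n+1})=F(e_{n+1},t_{n+1})=1$, while $F(Se_1,t_{n+1})=0$, for every $n\in\N$. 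Hence no $\lambda_0$ works for $\varepsilon=\frac12$.

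\paragraph{Consequence for your fallback.} Adopting joint continuity is not a charitable reading of the hypothesis but a necessary strengthening. One must assume that $F$ is jointly continuous on $(B_H[0,r],\sigma(H,H'))\times\mathcal{K}$ for all $r>0$ and all compact $\mathcal{K}$. Equivalently, $\{F(\cdot,y);y\in\mathcal{K}\}$ must be equicontinuous on $(B_H[0,r],\sigma(H,H'))$. \Cref{def:whc} would need the same strengthening where the lemma enters via \eqref{eq:need}.

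Under that hypothesis the rest of your argument is correct as written. The metric $\varrho$ works, the tail bound via $\sup_\lambda\norm{S_\lambda}<\infty$ (which also bounds $\norm{S}$) holds, and the final choice of $\delta$ and $\lambda_0$ goes through.
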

\begin{proof} The uniform boundedness principle ensures that $S_\cdot$ is bounded. Hence, $\bigcup_{\lambda\geq 0} S_\lambda [B_{H}[0,R]]\subseteq H$ is bounded by $R'>0$, say. By \cref{lem:linapp} with $\sigma=\sigma(H,H')$,
  \begin{equation*}
    \big(B_{H}[0,R],\sigma\big)\times \mathcal{K} \ni (x,y)\mapsto (S_\lambda x,y) \in \big(B_{H}[0,R'],\sigma\big)\times \mathcal{K}
  \end{equation*}
  is equicontinuous.
Let $\varepsilon>0$. By Banach--Alaoglu, the reflexivity of $H$ and its separability, it follows that $\big(B_{H}[0,R'],\sigma\big)$ is metrisable. Now, $F(x,\cdot)$ is continuous and $F_y=F(\cdot,y)$ is continuous (under the weak topology). Hence,
\[
F\colon  \big(B_{H}[0,R'],\sigma\big) \times \mathcal{K} \to X
\]
is continuous. Thus, by \cref{lem:easycons}, as $\big(B_{H}[0,R'],\sigma\big)\times \mathcal{K}$ is compact, $F(S_\lambda x,y)\to F(S x,y)$ uniformly in $(x,y)$ as $\lambda\to\infty$, which implies the assertion.
\end{proof}
\begin{remark}
  A particular instance of the application of the latter lemma is the following: Let $[0,\infty)\ni \lambda\mapsto S_\lambda \in \mathcal{L}(H)$ be continuous and such that $S_\lambda^* \to\colon S^* \in L(H)$ in the strong operator topology. If $(u_n)_n$ in $\mathcal{C}\big([0,T];H\big)$ is bounded  and $(v_n)_n$ in $\mathcal{C}\big([0,T];\mathcal{H}\big)$ is convergent, then
  \begin{multline}\label{eq:need}
  \forall\varepsilon \!> \!0 \, \exists \lambda_0 \!> \!0 \,\forall \lambda \!\geq \!\lambda_0, n \!\in \!\N, t \!\in \![0,T]\colon \\  \Hd\big(F(u_0+S_\lambda u_{n}(t),v_n(t)),F(u_0+u_n(t),v_n(t))\big)\leq \varepsilon.
\end{multline}
Indeed, this is implied by the fact that $\overline{\bigcup_{n\in \N} u_n[0,T]}$ is bounded and, due to convergence of $(v_n)_n$, compactness of $\overline{\bigcup_{n\in \N} v_n[0,T]}$.
\end{remark}

\subsection{Local existence}
\label{sec:LocalExist}

The idea for the proof of our local existence result is motivated from the autonomous case for the generalized Schr\"odinger--Debye-system, studied in \cite[thm.~37]{Simsen2024}. However, several results in \cite{Simsen2024} appear to require a modification, pertaining to the lack of maximal regularity of solutions of the Schr\"odinger equation, mandating several complications in the proof. Additionally, problem \eqref{eq:IVP} is now in comparison non-autonomous, asking for new ingredients. The time-dependence of $A$ can be dealt with by carefully handling selections, for the semigroup part --- due to lack of parabolic regularity in applications such as the Schr\"odinger equation --- stronger properties on the forcing terms have to be demanded. These can be checked in a relevant example class, see~\cref{subsec:RHS}.

\begin{definition}\label{def:whc}
 Let $H,\mathcal{H},X$ be Hilbert spaces, $F\colon H\times \mathcal{H}\to \mathcal{P}(X)$ bcc. We call $F$ {\em half weakly continuous (w.r.t.\ the first variable)}, if for all $y\in \mathcal{H}$ and $R>0$, the map
 \[
    F_y \colon \bigl(B_H[0,R],\sigma(H,H')\bigr)\to \mathcal{P}(X), \quad x\mapsto F(x,y)
 \]
 is continuous w.r.t.\ the Hausdorff-distance in $\mathcal{P}(X)$.
\end{definition}

The main result of this section is the following:
\begin{theorem}[local existence]
  \label{th:LocalExist}
  Let $H$ and $\mathcal{H}$ be separable real Hilbert spaces. Let $-E$ be generator of a $\mathrm{C}_{0}$-semigroup $(T(t))_{t\geq 0}$ with compact resolvent and let $A=(A(t))_{t>0}$ be a family of univalued operators $A(t)=\partial \phi^t$ with $\phi^t$ satisfying \cref{ass:A}, $T_m>0$. Let $F\colon H \times \mathcal{H} \to \mathcal{P}(H)$ and $G\colon H \times \mathcal{H} \to \mathcal{P}(\mathcal{H})$
  \begin{itemize}[leftmargin = 4ex]
    \item be bcc functions.
    \item be Hausdorff-continuous.
    \item be half weakly continuous (both w.r.t.\ the first variable).
    \item map bounded sets of $H \times \mathcal{H}$ into bounded sets of $H$ and $\mathcal{H}$ respectively.
  \end{itemize}
  Then for any bounded set  $B \subseteq H \times \mathcal{H}$ there exists $T_{m}\geq T_{0} >0$\footnote{Here and in the following we denote the maximal existence time by $T_{m}$ instead of $T$ to avoid any confusion with the semigroup $T(\argdot)$.} s.t.\ for each  $(u_{0}, v_{0}) \in B$ there exists at least one strong solution  $(u,v)$ of \eqref{eq:IVP} on  $[0, T_{0}]$.
\end{theorem}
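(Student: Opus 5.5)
The plan is to set up a set-valued fixed point problem on the forcing terms and apply \cref{th:FixedPointTheorem}. Fix the bounded set $B$ and let $R_0$ bound $\norm{u_0}_H+\norm{v_0}_{\mathcal{H}}$ over $B$.

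\emph{A priori bounds.} Take $r>0$ and let $M_r$ bound $F$ and $G$ on $B_{H\times\mathcal{H}}[0,r]$. By \cref{th:SemiGroupSolution}, for $\norm{f}_{\Leb^2(0,T_0;H)}\leq M_r\sqrt{T_0}$ we have $\norm{u_f(t)}\leq C(R_0+M_rT_0)$. For the monotone part, testing with $v$ and using $\partial\phi^t(0)=0$ together with monotonicity gives $\norm{v_g(t)}^2\leq \norm{v_0}^2+2\int_0^t\norm{g}\norm{v}$. \Cref{lem:elem} then yields $\norm{v_g(t)}\leq R_0+\int_0^t\norm{g}$. We choose $r$ large compared with $R_0$ and then $T_0$ small, so that both solutions stay in the ball of radius $r$ on $[0,T_0]$.

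\emph{Fixed point map.} Let $K$ be the closed ball of radius $M_r\sqrt{T_0}$ in $X\coloneqq \Leb^2(0,T_0;H)\times\Leb^2(0,T_0;\mathcal{H})$; it is convex and weakly compact. For $(f,g)\in K$ set $\Phi(f,g)\coloneqq \Sel F(u_f,v_g)\times\Sel G(u_f,v_g)$. These values are nonempty by \cref{th:SelectionBySeparability}, since Hausdorff-continuity gives measurability. They are bounded by $M_r$, lie in $K$ by the choice of $r$, and are convex and closed because $F$ and $G$ are bcc. A fixed point $(f,g)\in\Phi(f,g)$ gives a solution $(u_f,v_g)$, which is a strong solution by \cref{th:MonotoneSolution}.

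\emph{Closed graph, the main obstacle.} Take $(f_n,g_n)\rightharpoonup(f,g)$ and $(\tilde f_n,\tilde g_n)\in\Phi(f_n,g_n)$ with $(\tilde f_n,\tilde g_n)\rightharpoonup(\tilde f,\tilde g)$. By \cref{th:UniformConvergence}, $v_{g_n}\to v_g$ in $\mathcal{C}([0,T_0];\mathcal{H})$. If we also had $u_{f_n}\to u_f$ pointwise in $H$, then \cref{th:SelectionByWUS}, with Hausdorff-continuity implying weak upper semicontinuity, would conclude. However, without maximal regularity \cref{rem:complcont} only gives convergence in $\mathcal{C}([0,T_0];\Sob^{-1}(E))$.

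This gap is bridged as follows. First, $u_{f_n}(t)\rightharpoonup u_f(t)$ weakly in $H$ for each $t$, and the $u_{f_n}$ are uniformly bounded. Half weak continuity of $F$ and $G$ in the first variable, combined with Hausdorff-continuity in the second variable and the compactness of $\overline{\bigcup_n v_{g_n}[0,T_0]}$, gives
\[
\Hd\bigl(F(u_{f_n}(t),v_{g_n}(t)),F(u_f(t),v_g(t))\bigr)\to 0
\]
for each $t$, and likewise for $G$. To make this uniform I would use the Yosida-type regularisation $S_\lambda=\lambda(\lambda+E)^{-1}$. By \cref{lem:weakstrong} and \eqref{eq:need}, replacing $u_{f_n}-T(\cdot)u_0$ by its $S_\lambda$-smoothed version changes $F$ and $G$ by at most $\varepsilon$ uniformly in $n$ and $t$. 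The smoothed functions come from $\dom(E)$-valued data, so \cref{th:CompactnessInC} makes them converge uniformly in $H$.

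Next, apply the quantitative version \cref{rem:actualthm3.3.1} of \cref{th:CloseSelections}. It gives selections $h_n\in\Sel F(u_f,v_g)$ with $\norm{h_n-\tilde f_n}_{\Leb^2}\leq \varepsilon_n\to 0$, and similarly for $G$. Then $h_n\rightharpoonup\tilde f$. Since $\Sel F(u_f,v_g)$ is convex and closed, hence weakly closed, we get $\tilde f\in\Sel F(u_f,v_g)$, and likewise $\tilde g\in\Sel G(u_f,v_g)$. Hence the graph of $\Phi$ is sequentially weakly closed, \cref{th:FixedPointTheorem} yields a fixed point, and $T_0$ depends only on $B$.

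The hardest step is controlling the uniformity in $t$ of this Hausdorff convergence via the $S_\lambda$ approximation.
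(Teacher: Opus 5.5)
\paragraph{Same route, shared gap.} Your set-up and closed-graph strategy are the paper's own: a priori bounds via \cref{lem:elem}, a ball of forcing terms, the selection map $\Phi$ and \cref{th:FixedPointTheorem}; then Yosida smoothing, \cref{lem:weakstrong}/\eqref{eq:need}, \cref{rem:actualthm3.3.1} and weak closedness of $\Sel F(u,v)$. The only difference is that you merge the paper's chain of three applications of \cref{th:CloseSelections} into one uniform Hausdorff estimate. The problem is the step both versions rest on. Your claim that half weak continuity in $x$, norm-continuity of $F$ and compactness of $\overline{\bigcup_n v_{g_n}[0,T_0]}$ give $\Hd\bigl(F(u_{f_n}(t),v_{g_n}(t)),F(u_f(t),v_g(t))\bigr)\to 0$ needs $F$ to be jointly continuous on $\bigl(B_H[0,R],\sigma(H,H')\bigr)\times\mathcal{K}$. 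So does the uniform-in-$n$ estimate \eqref{eq:need}. Separate continuity does not give joint continuity. The proof of \cref{lem:weakstrong} makes exactly this inference, so citing it, \eqref{eq:need} or \cref{th:WeakSeqMeasurability} does not close the gap.

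\paragraph{Counterexample.} Everything below satisfies the theorem's hypotheses.
\begin{itemize}
\item Spaces and operators: $H$ is the realification of $\Leb^2(0,\pi)$ and $E=-\iu\Delta$ with eigenbasis $(e_k)$, so $T(t)e_k=\e^{-\iu k^2t}e_k$. Take $\mathcal{H}=\R$, $\phi^t\equiv 0$ (so $A\equiv 0$) and $G\equiv\{0\}$.
\item The coupling: $F(x,y)\coloneqq\bigl\{\sum_k\chi_k(y)\abs{\langle x,e_k\rangle_{\C}}^2e_1\bigr\}$, where the $\chi_k\colon\R\to[0,1]$ are continuous, have pairwise disjoint supports in $(\tfrac1{k+1},\tfrac1k)$, and satisfy $\chi_k(p_k)=1$.
\item Why $F$ qualifies: for fixed $y$ at most one term survives, so $F(\cdot,y)$ is weakly continuous. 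Near $y=0$ only large $k$ contribute and $\abs{\langle x,e_k\rangle}\leq\norm{x-x_0}+\abs{\langle x_0,e_k\rangle}$, so $F$ is norm-continuous. It is bcc and bounded on bounded sets.
\item The sequences: $u_0=v_0=0$, $f_n(s)\coloneqq cT(s)e_{k_n}$ with $c>0$ small, and $g_n\coloneqq\tfrac{2p_{k_n}}{T_0}\mathbf{1}_{[0,T_0/2]}$. Then $f_n\rightharpoonup 0$, $g_n\to 0$, $u_n(t)=ctT(t)e_{k_n}\rightharpoonup 0$, and $v_n\to 0$ uniformly with $v_n=p_{k_n}$ on $[T_0/2,T_0]$.
\item The failure: on $[T_0/2,T_0]$ we have $F(u_n(t),v_n(t))=\{c^2t^2e_1\}$, whereas $F(u(t),v(t))=\{0\}$. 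Every weak limit of the (unique) selections equals $c^2t^2e_1$ there. Hence the graph of $\Phi$ is not weakly sequentially closed, and your pointwise claim fails for $t\in[T_0/2,T_0]$.
\item Also, $S_\lambda e_k=\lambda(\lambda+\iu k^2)^{-1}e_k\to 0$ as $k\to\infty$, so \eqref{eq:need} fails for every $\lambda$.
\end{itemize}

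\paragraph{Repair.} You need a stronger hypothesis, for instance that $F$ and $G$ are $\Hd$-continuous on $B_H[0,R]\times\mathcal{H}$ for the weak$\times$norm topology, for every $R>0$. Under it your pointwise claim holds and already suffices. By dominated convergence, $\int_0^{T_0}\dist\bigl(\tilde f_n(t),F(u_f(t),v_g(t))\bigr)^2\dx[t]\to 0$. This functional is convex and continuous, hence weakly lower semicontinuous, so $\tilde f\in\Sel F(u_f,v_g)$. Neither the Yosida step nor \cref{th:CloseSelections} is then needed.
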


Since the proof is technical and long, we segregate individual steps and formulate them as separate lemmata.

\begin{lemma}
  \label{th:LocalExistSingleValued}
  Under the assumptions of \cref{th:LocalExist}, let $B \subseteq H \times \mathcal{H}$ be bounded, $T_0>0$. Then for any $(f,g) \in \Leb^{2}(0,T_{0};{H})\times \Leb^{2}(0,T_{0};\mathcal{H})$ and $(u_0,v_0)\in B$, let $(u_f,u_g) \in  \mathcal{C}\bigl([0,T_{0}];H\bigr)\times \mathcal{C}\bigl([0,T_{0}];\mathcal{H}\bigr)$ solve
  \begin{equation}
    \label{eq:SingleValuedProblem}
    \left\{
    \begin{aligned}
      \tfrac{\dd}{\dx[t]}u + E u &= f &&\text{on}\ (0,T_{0})\times H,\\
      \tfrac{\dd}{\dx[t]}v + A(t)v &= g &&\text{on}\ (0,T_{0})\times \mathcal{H},\\
       u(0) &= u_{0} &&\text{in}\ H,\\
       v(0) &= v_{0} &&\text{in}\ \mathcal{H}.
    \end{aligned}
    \right.
  \end{equation}
  Then
  \begin{align}
    \label{eq:BoundedSolutions}
    \begin{aligned}
      &\max \bigl\{\norm{u_{f}}_{\mathcal{C}([0,T_{0}];H)}, \norm{v_{g}}_{\mathcal{C}([0,T_{0}],\mathcal{H})}\bigr\} \\
      &\qquad\leq m-1 + \sqrt{T_{0}}\max \bigl\{\norm{f}_{\Leb^{2}(0,T_{0};H)}, \norm{g}_{\Leb^{2}(0,T_{0};\mathcal{H})}\bigr\}\text{,}
    \end{aligned}
  \end{align}
  where 
  \begin{equation}
    \label{eq:m}    m \coloneq \widetilde{C} \beta + 1
\text{ with } \widetilde{C}\coloneqq \sup_{s\in [0,T_{m}]}\norm{T(s)},\; \beta \coloneqq \sup_{(a,b)\in B} \big\{\norm{a}_{H},\norm{b}_{\mathcal{H}}\big\}.
   \end{equation}
\end{lemma}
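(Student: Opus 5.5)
The plan is to estimate the two components separately and to make sure each bound is dominated by $m-1=\widetilde{C}\beta$ plus the forcing contribution. Two facts will be used repeatedly. First, $\widetilde{C}\geq \norm{T(0)}=1$, so $\beta\leq \widetilde{C}\beta=m-1$. Second, I assume $T_0\leq T_m$, as is implicit in the definition of $\widetilde{C}$.

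\emph{The semigroup component.} I use the representation from \cref{th:SemiGroupSolution}:
\[
u_f(t)=T(t)u_0+\medint\int_0^t T(t-s)f(s)\dx[s].
\]
The initial term satisfies $\norm{T(t)u_0}\leq \widetilde{C}\beta$. For the Duhamel term I bound the integrand pointwise by $\norm{T(t-s)}\,\norm{f(s)}$ and then apply Cauchy--Schwarz on $[0,t]\subseteq[0,T_0]$. This gives
\[
\norm{u_f(t)}\leq \widetilde{C}\beta+\widetilde{C}\sqrt{T_0}\,\norm{f}_{\Leb^2(0,T_0;H)}.
\]
This is where I expect the only genuine obstacle. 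The statement carries no factor $\widetilde{C}$ in front of $\sqrt{T_0}$, so the estimate matches exactly only if $\widetilde{C}\leq 1$, i.e.\ for a contraction semigroup. That case covers the Schr\"odinger, heat, wave and Maxwell examples, which are (quasi-)contractive up to rescaling. For a general $\mathrm{C}_0$-semigroup I would either absorb the factor by renormalising the norm of $H$ so that the semigroup becomes contractive on $[0,T_m]$, or, if that is not intended, state the estimate with $\widetilde{C}\sqrt{T_0}$ in place of $\sqrt{T_0}$. The later fixed-point argument only needs some bound of the form $m-1+\mathrm{const}\cdot\sqrt{T_0}\,\norm{f}$, so either version serves. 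I would flag this explicitly.

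\emph{The monotone component.} I use an energy estimate. By \cref{ass:A}, $A(t)=\partial\phi^t$ is monotone with $A(t)0=0$ for a.e.\ $t$. Hence $\dualprod{A(t)v(t)}{v(t)}\geq 0$ for a.e.\ $t$. The strong solution $v=v_g$ from \cref{th:MonotoneSolution} is absolutely continuous on compact subintervals of $(0,T_0)$. So on $[s,t]$ with $s>0$ I can take the scalar product of the equation with $v$ and obtain
\[
\tfrac12\tfrac{\dd}{\dx[\tau]}\norm{v(\tau)}^2=\dualprod{g(\tau)-A(\tau)v(\tau)}{v(\tau)}\leq \norm{g(\tau)}\,\norm{v(\tau)}.
\]
Integrating from $s$ to $t$ and letting $s\downarrow 0$, using $v\in\mathcal{C}([0,T_0];\mathcal{H})$, yields
\[
\norm{v(t)}^2\leq \norm{v_0}^2+\medint\int_0^t 2\norm{g(\tau)}\,\norm{v(\tau)}\dx[\tau].
\]

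Next I apply \cref{lem:elem} with $c=\norm{v_0}$ and $h=2\norm{g(\cdot)}\in \Leb^1$. This gives $\norm{v(t)}\leq \norm{v_0}+\int_0^{t}\norm{g}$. Cauchy--Schwarz then gives
\[
\norm{v(t)}\leq \beta+\sqrt{T_0}\,\norm{g}_{\Leb^2(0,T_0;\mathcal{H})}\leq m-1+\sqrt{T_0}\,\norm{g}_{\Leb^2(0,T_0;\mathcal{H})}.
\]

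Finally I take the supremum over $t\in[0,T_0]$ in both estimates. Bounding each forcing norm by the maximum of the two then yields \eqref{eq:BoundedSolutions}, subject to the caveat about the factor $\widetilde{C}$ in the semigroup part.
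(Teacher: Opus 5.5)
Your proof is the paper's proof: Duhamel's formula with Cauchy--Schwarz for $u_f$, and for $v_g$ the energy identity on $[\delta,t]$, monotonicity with $A(t)0=0$, the limit $\delta\downarrow 0$, \cref{lem:elem} and Cauchy--Schwarz. You also make explicit the use of $\widetilde{C}\geq 1$ to get $\norm{v_0}\leq m-1$, which the paper uses tacitly.

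Your caveat about $\widetilde{C}$ is correct. It points to a flaw in the paper, not in your argument. Step (i) of the paper's proof ends with $\widetilde{C}\norm{u_0}_H+\widetilde{C}\sqrt{T_0}\norm{f}_{\Leb^2(0,T_0;H)}\leq m-1+\sqrt{T_0}\norm{f}_{\Leb^2(0,T_0;H)}$, which silently drops the factor $\widetilde{C}\geq 1$. The inequality as stated is in fact false in general. Take $H=\mathcal{H}=\mathbb{R}$, $E=-a$ with $a>0$, $A(t)=I$, $F\equiv\{0\}$, $G\equiv\{0\}$, $B=\{(0,0)\}$, $f\equiv 1$ and $g\equiv 0$. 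Then $m=1$ and the right-hand side of \eqref{eq:BoundedSolutions} equals $T_0$, while $\norm{u_f}_{\mathcal{C}([0,T_0];H)}=(\e^{aT_0}-1)/a>T_0$.

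The version with $\widetilde{C}\sqrt{T_0}$ (and $T_0\leq T_m$) is what should be stated. As you observe, it suffices for \cref{th:PhiSelfmap}, which only uses a bound uniform in $(f,g)\in M$ and $T_0\leq T_m$.

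Discard the renorming alternative, though. On $\mathbb{R}$ every norm is a multiple of the absolute value, so $\norm{T(t)}=\e^{at}>1$ in any norm. Moreover, a renorming would change the norms in which $\beta$ and $\norm{f}$ are measured, so equivalence constants would reappear in the original norms.

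Likewise, quasi-contractivity alone does not give $\widetilde{C}\leq 1$. What the examples actually provide is contractivity in their natural, possibly weighted, energy norms.
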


\begin{proof}
  Let $(f,g) \in \Leb^{2}(0,T_{0};{H})\times \Leb^{2}(0,T_{0};\mathcal{H})$, $(u_0,v_0)\in B$ and $T_0>0$. Note that unique solvability is established in \cref{th:SemiGroupSolution} and \cref{th:MonotoneSolution}, respectively.
  We proceed to show the estimates:
  \begin{enumerate}[leftmargin=4ex, label = (\roman*)]
    \item \label{it:SGestimate} For $u_{f}$ we can estimate for $0 \leq t \leq T_{0}$:
          \begin{align*}
            \norm{u_{f}(t)}_{H}
            &\leq \norm{T(t)u_{0}}_{H} + \norm[\Big]{\medint\int_{0}^{t}T(t-s)f(s)\dx[s]}_{H}\\
            &\leq \norm{T(t)}_{\mathcal{L}(H)}\norm{u_{0}}_{H} + \medint\int_{0}^{t} \norm{T(t-s)f(s)}_{H}\dx[s]\\
            &\leq \widetilde{C}\norm{u_{0}}_{H} + \widetilde{C}\medint\int_{0}^{t} \norm{f(s)}_{H}\dx[s]\\
            &\leq \widetilde{C}\norm{u_{0}}_{H} + \widetilde{C}\sqrt{T_{0}}\norm{f}_{\Leb^{2}(0,T_{0};H)}\\
            &\leq m -1 + \sqrt{T_{0}}\norm{f}_{\Leb^{2}(0,T_{0};H)}
          \end{align*}
    \item \label{it:Monotestimate} For $v_{g}$ we first note, that $v_{g}$ is absolutely continuous on $[\delta,T_{0}]$ by \cref{th:MonotoneSolution} and from monotonicity of $A(t)$ for a.e. $\delta \leq t \leq T_{0}$ using $0 \in \dom(A(t))$, we obtain
    \begin{align*}
      \tfrac{1}{2}\tfrac{\dd}{\dx[t]}\dualprod[\big]{v_{g}(t)}{v_{g}(t)}_{\mathcal{H},\mathcal{H}}
            &=\dualprod[\big]{\tfrac{\dd}{\dx[t]}v_{g}(t)}{v_{g}(t)}_{\mathcal{H},\mathcal{H}}\\
            & = - \underbrace{\dualprod[\big]{A(t)v_{g}(t)}{v_{g}(t)}_{\mathcal{H},\mathcal{H}}}_{\geq 0}
            + \dualprod[\big]{v_{g}(t)}{g(t)}_{\mathcal{H},\mathcal{H}}\\
            & \leq \norm{v_{g}(t)}_{\mathcal{H}}\norm{g(t)}_{\mathcal{H}}.
\end{align*}
    Integration over $\delta \leq t \leq T_{0}$ yields
          \begin{align*}
            \norm{v_{g}(t)}_{\mathcal{H}}^{2}
            &\leq \norm{v_{g}(\delta)}_{\mathcal{H}}^{2} +2\medint\int_{\delta}^{t}\norm{v_{g}(s)}_{\mathcal{H}}\norm{g(s)}_{\mathcal{H}}\dx[s]\text{.}
            \intertext{Since, again by \cref{th:MonotoneSolution}, the solution $v_{g}$ is continuous on $[0,T_{0}]$, by letting $\delta \downarrow 0$, we obtain for $0 < t \leq T_{0}$}
              \norm{v_{g}(t)}_{\mathcal{H}}^{2}
            &\leq \norm{v(0)}_{\mathcal{H}}^{2} + 2\medint\int_{0}^{t}\norm{v_{g}(s)}_{\mathcal{H}}\norm{g(s)}_{\mathcal{H}}\dx[s]\text{.}
          \end{align*}
          Hence, by \cref{lem:elem}, we infer
          \begin{align*}
            \norm{v_{g}(t)}_{\mathcal{H}} & \leq \norm{v(0)}_{\mathcal{H}} +  \int_0^t \norm{g(s)}_{\mathcal{H}}\dx[s] \\
            & \leq m-1 + \sqrt{T_0} \|g\|_{\Leb^{2}(0,T_{0}.
            \mathcal H)}.\qedhere
          \end{align*}
  \end{enumerate}
\end{proof}
\Cref{th:LocalExistSingleValued} allows us to define the solution map
  \begin{equation}
    \label{eq:SolutionMap}
    S\colon B\to \mathcal{C}\bigl([0,T_{0}];H\bigr)\times \mathcal{C}\bigl([0,T_{0}];\mathcal{H}\bigr)\text{,}\qquad (f,g)\mapsto (u_{f},v_{g}),
  \end{equation}
corresponding to problem \eqref{eq:SingleValuedProblem}. Ultimately, to prove \cref{th:LocalExist}, we want to make use of the fixed point \cref{th:FixedPointTheorem}. The next lemma will allow us to define a fixed point map, provided that we restrict to a sufficiently small set.
\begin{lemma}
  \label{th:PhiSelfmap}
  Under the assumptions of \cref{th:LocalExist}, there exists $T_0>0$ s.t.
  \begin{equation}
    \label{eq:M}
    M\coloneq {{B}}_{\Leb^{2}(0,T_{0};H)}[0,m]\times {{B}}_{\Leb^{2}(0,T_{0};\mathcal{H})}[0,m] \text{,}
  \end{equation}
  where $m$ is given as in \eqref{eq:m}, satisfies: for $(f,g) \in M$, any pair of measurable selections $\overline{f}\in \Sel F\bigl(S(f,g)\bigr)$ and $\overline{g} \in \Sel G\bigl(S(f,g)\bigr)$ satisfies $(\overline{f},\overline{g}) \in M$.
\end{lemma}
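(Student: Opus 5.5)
Throughout, we take $m$ as in \eqref{eq:m} and fix $T_0\in(0,T_m]$, to be chosen small at the end. Let $(f,g)\in M$, so $\norm{f}_{\Leb^2(0,T_0;H)}\leq m$ and $\norm{g}_{\Leb^2(0,T_0;\mathcal{H})}\leq m$.

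The first step is to bound the solutions. By \cref{th:LocalExistSingleValued}, the pair $(u_f,v_g)=S(f,g)$ satisfies
\[
  \max\bigl\{\norm{u_f}_{\mathcal{C}([0,T_0];H)},\norm{v_g}_{\mathcal{C}([0,T_0];\mathcal{H})}\bigr\}\leq m-1+\sqrt{T_0}\,m\leq m-1+\sqrt{T_m}\,m\eqqcolon \rho .
\]
This bound depends neither on $T_0\leq T_m$ nor on $(f,g)\in M$ nor on $(u_0,v_0)\in B$. Consequently, for every $t\in[0,T_0]$ the point $(u_f(t),v_g(t))$ lies in the fixed bounded set $B_H[0,\rho]\times B_{\mathcal{H}}[0,\rho]$.

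The second step bounds the values of $F$ and $G$. By hypothesis, $F$ and $G$ map bounded sets into bounded sets, so there is a constant $L\geq 0$ with
\[
  \sup\bigl\{\norm{x}_H+\norm{y}_{\mathcal{H}};\ x\in F(a,b),\ y\in G(a,b),\ \norm{a}_H\leq\rho,\ \norm{b}_{\mathcal{H}}\leq\rho\bigr\}\leq L .
\]
Now take any $\overline f\in\Sel F(S(f,g))$. Then $\overline f(t)\in F(u_f(t),v_g(t))$ for a.e.\ $t$, hence $\norm{\overline f(t)}_H\leq L$ a.e., and therefore
\[
  \norm{\overline f}_{\Leb^2(0,T_0;H)}\leq L\sqrt{T_0}.
\]
Measurability of $\overline f$ is part of the definition of $\Sel$, so $\overline f\in \Leb^2$. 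The same argument gives $\norm{\overline g}_{\Leb^2(0,T_0;\mathcal{H})}\leq L\sqrt{T_0}$.

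The third step chooses $T_0\coloneqq\min\{T_m,(m/L)^2\}$, with the convention $T_0=T_m$ if $L=0$. Then $L\sqrt{T_0}\leq m$, so $(\overline f,\overline g)\in M$. The potential obstacle is circularity: the bound $\rho$ must not grow as $T_0$ changes. Capping $\rho$ using $T_0\leq T_m$ removes this dependence, so the choice of $T_0$ is consistent. Note also that $m\geq 1>0$, which guarantees $T_0>0$.
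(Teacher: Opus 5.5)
Your proof is correct and follows the paper's argument. You obtain a uniform bound on $S(f,g)$ from \eqref{eq:BoundedSolutions}, use boundedness of $F$ and $G$ on the resulting ball to get a radius ($r$ in the paper, your $L$), and then take $T_0\le (m/r)^2$; your explicit cap $T_0\le T_m$ also resolves a circularity the paper leaves implicit, since its $r$ depends on $T_0$ through \eqref{eq:BoundedSolutions}, and it keeps your argument valid with the bound $m-1+\widetilde{C}\sqrt{T_0}\,m$ that the proof of \cref{th:LocalExistSingleValued} actually yields for the semigroup component.
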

We point out, that the sets of selections in question are non-empty appealing to \cref{th:SelectionBySeparability}, since the Bochner--Lebesgue space $\Leb^{2}(0,T_{m};H)$ over a separable Hilbert space $H$ is separable again.

\begin{proof}
  We first argue that measurable selections $\overline{f}$ and $\overline{g}$ are indeed in $\Leb^{2}$; in fact, they are in $\Leb^{\infty}(0,T_{m};H)$:
  
  Let  $(u,v) = S(f,g)$. Since $F$ is bcc, we know that $F\bigl((u,v)(t)\bigr)$ is non-empty, bounded, closed and convex. Since the pair $(u,v)$ is continuous, the ranges $u\bigl([0,T_{m}]\bigr) \subseteq H$ and $v\bigl([0,T_{m}]\bigr)\subseteq \mathcal{H}$ are compact, hence bounded; consequently $F\circ (u,v) \bigl([0,T_{m}]\bigr)$ and $G \circ (u,v) \bigl([0,T_{m}]\bigr)$ are bounded, because $F$ and $G$ are bounded (i.e., map bounded sets into bounded sets). Hence, any $\overline{f} \in \Sel F(u,v)$ and $\overline{g} \in \Sel G(u,v)$ are measurable and bounded and consequently in $\Leb^{\infty}(0,T_{m};H)$ and $\Leb^{\infty}(0,T_{m};\mathcal{H})$ respectively.

  For any $(\overline{f},\overline{g}) \in M$, the corresponding solutions $u_{\overline{f}}$ and $v_{\overline{g}}$ defined by $(u_{\overline{f}},v_{\overline{g}}) \coloneqq S(\overline{f},\overline{g})$ are uniformly bounded in $\norm{\argdot}_{H}$- and $\norm{\argdot}_{\mathcal{H}}$-norm by \eqref{eq:BoundedSolutions}. Since $F$ and $G$ are bounded, there exists $r>0$ such that
  \begin{equation}
    \label{eq:Balls}
    \bigcup_{(u,v)\in M}F(u,v)\big([0,T_{0}]\big) \subseteq {{B}}_{H}[0,r]\quad \text{and} \quad \bigcup_{(u,v)\in M}G(u,v)\big([0,T_{0}]\big) \subseteq {{B}}_{\mathcal{H}}[0,r]\text{.}
  \end{equation}
  By choosing $T_{0}$ s.t.\ $0 < T_{0} \leq \big(\nicefrac{m}{r}\big)^{2}$ we estimate
  \begin{equation*}
    \norm[\big]{\overline{f}}_{\Leb^{2}(0, T_0,\mathcal{H})} = \Bigl(\medint\int_{0}^{T_0} \norm[\big]{\overline{f}(t)}_{\mathcal{H}}^{2} \dx[t]\Bigr)^{1/2} \leq (r^2 T_0)^{1/2} \leq m.
  \end{equation*}
  Similarly, we obtain $\norm{\overline{g}}_{\Leb^{2}(0, T_0, H)} \leq m$. Consequently, $\bigl(\overline{f}, \overline{g}\bigr) \in M$.
\end{proof}

Ultimately, we want to verify that the following map admits a fixed point.
\begin{equation}
  \label{eq:FixedPointMap}
  \Phi\colon M \to \mathcal{P}(M)\text{,}\quad (f,g)\mapsto \Sel F\bigl(S(f,g)\bigr)\times \Sel G\bigl(S(f,g)\bigr)\text{,}
\end{equation}
where $M$ is defined as in \eqref{eq:M}. \Cref{th:PhiSelfmap} will help to show that $\Phi$ is well-defined. As we aim to apply \cref{th:FixedPointTheorem}, we check that $\Phi$ exhibits several properties. We verify all but one in the next lemma; the remaining one we verify in \cref{th:WeakSeqMeasurability}.

\begin{lemma}
  \label{th:PropertiesOfPhi}
  Under the assumptions of \cref{th:LocalExist}, the map $\Phi$ from \eqref{eq:FixedPointMap} is a well-defined, bcc map.
\end{lemma}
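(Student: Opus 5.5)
The plan is to verify, for fixed $(f,g)\in M$, the four properties that make $\Phi(f,g)=\Sel F\bigl(S(f,g)\bigr)\times \Sel G\bigl(S(f,g)\bigr)$ a well-defined bcc subset of $M$. We regard $M$ as a subset of $X\coloneqq \Leb^{2}(0,T_0;H)\times\Leb^{2}(0,T_0;\mathcal{H})$. Throughout we write $(u,v)\coloneqq S(f,g)$, which exists and is continuous by \cref{th:LocalExistSingleValued}.

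\textbf{Well-definedness and nonemptiness.} Since $F,G$ are Hausdorff-continuous and bcc, they are upper semicontinuous. Hence $t\mapsto F\bigl(u(t),v(t)\bigr)$ and $t\mapsto G\bigl(u(t),v(t)\bigr)$ are measurable multifunctions with nonempty closed values. As $H$ and $\mathcal{H}$ are separable, \cref{th:SelectionBySeparability} gives measurable selections. By \cref{th:PhiSelfmap} every such pair of selections lies in $M$, so $\Phi(f,g)\subseteq M$ is nonempty. We also identify selections that agree a.e., so $\Phi(f,g)$ is a genuine subset of $X$.

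\textbf{Boundedness and convexity.} Boundedness is immediate, since $\Phi(f,g)\subseteq M$ and $M$ is a product of closed balls. For convexity, let $\bar f_1,\bar f_2\in\Sel F(u,v)$ and $\lambda\in[0,1]$. Then $\lambda\bar f_1+(1-\lambda)\bar f_2$ is measurable, and for a.e.\ $t$ its value lies in the convex set $F\bigl(u(t),v(t)\bigr)$. The same holds for $G$, and a product of convex sets is convex.

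\textbf{Closedness.} This is the only step needing an argument beyond the definitions. Let $(\bar f_n,\bar g_n)\in\Phi(f,g)$ converge in $X$ to $(\bar f,\bar g)$. Two routes work:
\begin{itemize}
\item Pass to a subsequence converging pointwise a.e. Since each $F\bigl(u(t),v(t)\bigr)$ is closed, the limit $\bar f(t)$ lies in it for a.e.\ $t$, and $\bar f$ is measurable as an a.e.\ limit of measurable functions.
\item Alternatively, invoke \cref{th:SelectionByWUS} with the constant sequence $u_n\coloneqq(u,v)$. Strong convergence in $\Leb^{2}$ on the bounded interval implies weak convergence in $\Leb^{1}$, and upper semicontinuity implies weak upper semicontinuity.
\end{itemize}
The same argument applies to $G$. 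Finally, $\bar f,\bar g$ stay in the closed set $M$, so $(\bar f,\bar g)\in\Phi(f,g)$.

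The main, though mild, obstacle is the closedness together with the bookkeeping of the identification of selections modulo null sets. Weak closedness then follows from Mazur's lemma, because $\Phi(f,g)$ is convex and closed; this will be convenient later for \cref{th:FixedPointTheorem}.
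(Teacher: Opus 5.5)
Your proposal is correct and follows essentially the same route as the paper's proof. Nonemptiness comes from upper semicontinuity, measurability of the composed multifunction and \cref{th:SelectionBySeparability}, membership in $M$ from \cref{th:PhiSelfmap}, convexity pointwise, and closedness via an a.e.\ convergent subsequence. Your explicit remark that boundedness follows from $\Phi(f,g)\subseteq M$ (which the paper leaves implicit) and your alternative closedness argument via \cref{th:SelectionByWUS} are valid but inessential additions.
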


\begin{proof}
  We split the argument into $3$ steps:
  \begin{enumerate}[leftmargin=4ex, label = (\roman*)]
    \item The action of $\Phi$ is never the empty set.
    
          Let $(f,g) \in M$ and $(u,v) = S(f,g)$, where $S$ is the solution map, see~\eqref{eq:SolutionMap}. Since $(u,v) \in \mathcal{C}\bigl([0, T_{m}], H\bigr) \times \mathcal{C}\bigl([0, T_{m}], \mathcal{H}\bigr)$, $u$ and $v$  are measurable functions. Since $F$ and $G$ are upper semicontinuous (because they are Hausdorff-continuous and bcc), $G(u,v) \colon [0, T_0] \to \mathcal{P}(H)$ and $F(u,v) \colon [0, T_0] \to \mathcal{P}(\mathcal{H})$ are measurable. Since $H$ and $\mathcal{H}$ are separable, \cref{th:SelectionBySeparability} shows that $\Sel F(u,v) \neq \emptyset \neq \Sel G(u,v)$.
    \item $\Phi$ maps elements of  $M$ into subsets of  $M$.

          This follows immediately from \cref{th:PhiSelfmap}.
    \item $\Phi$ assumes convex and closed values.
    
           For convexity, let $f_{1}, f_{2} \in \Sel F(u,v)$ and $0 < \alpha < 1 $. Then, $\alpha f_1 + (1 - \alpha) f_2$ is measurable, since $f_{1}$ and $f_{2}$ are measurable.  Moreover,
          \begin{equation*}
            \alpha f_1(t) + (1 - \alpha) f_2(t) \in F\big(u(t), v(t)\big) \quad \text{a.e.~in}\ [0,T_{0}]\text{,}
          \end{equation*}
          since $F$ is bcc; in particular, $F\bigl(u(t), v(t)\bigr)$ is convex. The same procedure shows that $\Sel G(u,v)$ is in fact convex.
          
          To verify closedness in $\Leb^{2}(0, T_{0}; H) \times \Leb^{2}(0, T_{0}; \mathcal{H})$, consider a sequence $( f_{n})_{n \in \mathbb{N}} $ in $ \Sel F(u,v)$ s.t.\ $f_{n} \to \overline{f}$ in $\Leb^{2}(0, T_{0}, H)$. Evidently, $\overline{f} \in \Leb^{2}(0, T_{0},H)$ is measurable. By Fischer--Riesz, there exists a subsequence $(f_{n_k})_{k}$ converging pointwise almost everywhere in $H$. Thus,
          \begin{equation*}
            \overline{f}(t) \in \overline{F\bigl(u(t), v(t)\bigr)} = F\bigl(u(t), v(t)\bigr) \quad \text{a.e.~in}\ [0, T_{0}]\text{,}
          \end{equation*}
          because $F$ is bcc. Hence, $\Sel F(u,v)$ is closed. The same procedure shows that $\Sel G(u,v)$ is closed. Consequently, $\Phi$ assumes closed and convex values.\qedhere
  \end{enumerate}
\end{proof}

\begin{remark}\label{rem:wclosed}
  The sets $\Sel F(u,v)$ and $\Sel G(u,v)$ are even weakly compact in $\Leb^{2}(0,T_{0};H)$ and $\Leb^{2}(0,T_{0},\mathcal{H})$ respectively.  Indeed, since $\Leb^{2}(0,T_{0};H)$ is reflexive, it suffices to confirm boundedness and weak closedness. Since $\Sel F(u,v)$ is bounded, convex, and closed, Mazur's lemma implies it is weakly closed and hence weakly compact. The arguments for $\Sel G(u,v)$ are verbatim.
\end{remark}

We can now prove a first convergence result for solutions of problem \eqref{eq:SingleValuedProblem}.

\begin{lemma}
  \label{th:ConvergenceOfSolutions}
  Let $(f_{n},g_{n}) \in M$, where $M$ is as in \eqref{eq:M} and let $(u_{n},v_{n}) = S(f_{n},g_{n})$, where $S$ is the solution operator from \eqref{eq:SolutionMap}.
  Under the assumptions
  \[
    f_{n}\rightharpoonup\colon f \quad\text{in}\ \Leb^{2}(0,T_{0};H)\,\,\text{ and } 
    g_{n}\rightharpoonup\colon g \quad\text{in}\ \Leb^{2}(0,T_{0};\mathcal{H})\text{,}
\]
  the solutions $u_{n}$ and $v_{n}$ converge in the following sense:
  \begin{alignat*}{4}
    u_{n} &\to u &&\quad\text{in}\ \mathcal{C}\bigl([0,T_{0}];\Sob^{-1}(E)\bigr) &&\quad\text{with}\ u \in \mathcal{C}\bigl([0,T_{0}];H\bigr)\\
    v_{n} &\to v &&\quad\text{in}\ \mathcal{C}\bigl([0,T_{0}];\mathcal{H}\bigr) &&\quad\text{with}\ v \in \mathcal{C}\bigl([0,T_{0}];\mathcal{H}\bigr).
  \end{alignat*}
\end{lemma}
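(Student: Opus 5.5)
The two components decouple: $u_n$ depends only on $f_n$ and $v_n$ only on $g_n$. I would therefore treat them separately and invoke the two compactness results of \cref{sec:UniformConvergence}.

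\emph{The $v$-component.} This part is immediate from \cref{th:UniformConvergence}. The solution operator $\Sigma\colon \Leb^{2}(0,T_{0};\mathcal{H})\to \mathcal{C}([0,T_{0}];\mathcal{H})$ with fixed initial value $v_{0}$ is completely continuous under \cref{ass:A}. Since $g_{n}\rightharpoonup g$, we get $v_{n}=\Sigma g_{n}\to \Sigma g\eqqcolon v$ in $\mathcal{C}([0,T_{0}];\mathcal{H})$. The limit $v$ is the strong solution associated with $g$ by \cref{th:MonotoneSolution}, so $v\in\mathcal{C}([0,T_{0}];\mathcal{H})$.

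\emph{The $u$-component.} Here I would follow \cref{rem:complcont}.
\begin{enumerate}[label=(\roman*), leftmargin=5ex]
\item Pass to the extrapolated semigroup $(T_{-1}(t))_{t\geq0}$ on $\Sob^{-1}(E)$, whose generator is $-E_{-1}$ with $\dom(E_{-1})=H$ (graph norm equivalent to $\norm{\argdot}_H$). The sequence $(f_n)_n$ is bounded in $\Leb^{2}(0,T_{0};\dom(E_{-1}))=\Leb^2(0,T_0;H)$ because $M$ is bounded, and it converges weakly there to $f$. The initial value $u_{0}\in H=\dom(E_{-1})$ is constant in $n$.
\item The compact embedding $\dom(E)\hookrightarrow H$ gives, by duality (Schauder), compactness of $H\hookrightarrow \Sob^{-1}(E)$, i.e.\ $E_{-1}$ has compact resolvent.
\item Apply \cref{th:CompactnessInC} with $E_{-1}$ in place of $E$ and $\Sob^{-1}(E)$ as ambient space. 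Its weak-convergence part yields $u_{n}\to u$ in $\mathcal{C}([0,T_{0}];\Sob^{-1}(E))$, where $u(t)=T_{-1}(t)u_{0}+\int_{0}^{t}T_{-1}(t-s)f(s)\dx[s]$.
\item Since $T_{-1}$ extends $T$ and $u_{0},f$ take values in $H$, the formula for $u$ is the mild solution of \cref{th:SemiGroupSolution}. Hence $u=u_f\in\mathcal{C}([0,T_{0}];H)$.
\end{enumerate}
The solutions $u_n$, viewed in $\Sob^{-1}(E)$, coincide with the $H$-valued mild solutions by the same extension property, so no separate identification is needed.

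\emph{Main obstacle.} The delicate point is identifying the limit in the last part of \cref{th:CompactnessInC}. That proof asserts only pointwise weak convergence of the Duhamel terms. I would verify it directly: for fixed $t$ and $\varphi\in\Sob^{-1}(E)'$, the map
\[
h\mapsto \Bigl\langle \int_0^t T_{-1}(t-s)h(s)\dx[s],\varphi\Bigr\rangle
\]
is a bounded linear functional on $\Leb^2(0,T_0;H)$, so weak convergence of $f_n$ gives convergence of the pairings. The subsequence-of-subsequence argument then upgrades relative compactness in $\mathcal{C}([0,T_{0}];\Sob^{-1}(E))$ to convergence of the whole sequence, since every convergent subsequence has the same limit $u$.
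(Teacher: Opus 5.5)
Your proposal is correct and follows the paper's proof. The $v$-component uses the complete continuity of $\Sigma$ from \cref{th:UniformConvergence}, and the $u$-component uses \cref{th:CompactnessInC} for the extrapolated generator $-E_{-1}$ on $\Sob^{-1}(E)$ as in \cref{rem:complcont}, with the limit identified as the $H$-valued mild solution because $u_0\in H$ and $f\in\Leb^2(0,T_0;H)$. Your check of the pointwise weak limit via the bounded functional $h\mapsto\langle\int_0^tT_{-1}(t-s)h(s)\dx[s],\varphi\rangle$, together with the subsequence argument, spells out a step the paper leaves implicit in the last part of \cref{th:CompactnessInC}.
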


\begin{proof}
  We argue separately for the solutions of the partial problems \eqref{eq:CauchyProblem} and \eqref{eq:MonotoneIVP} that make up problem \eqref{eq:SingleValuedProblem}:
          \begin{enumerate}[label = (\roman*), leftmargin = 5ex]
            \item We begin with the monotone part, i.e.~\eqref{eq:MonotoneIVP}.
            
                  By \cref{th:UniformConvergence}, it follows that the solutions $(v_n)_n$ of
                  \begin{equation*}
                    \left\{
                    \begin{aligned}
                      \tfrac{\dd}{\dx[t]}v_{n}(t)+ A(t) v_n &= g_n(t) & \text{ a.e.~on}\ (0, T_0], \\
                      v_n(0) &= v_{0}, &
                    \end{aligned}
                    \right.
                  \end{equation*}
                   converge in $\mathcal{C}([0, T_0];\mathcal{H})$ to some  $\overline v$ in $C([0, T_0]; \mathcal{H})$ being the strong solution of
                  \begin{equation*}
                    \left\{
                    \begin{aligned}
                      \tfrac{\dd}{\dx[t]} \overline{v}(t)+A(t)\overline{v}(t) &= g(t) & \text{a.e.~on}\ [0,T_0],\\
                      \overline{v}(0)&=v_{0}. &
                    \end{aligned}
                    \right.
                  \end{equation*}
            \item Now for the semigroup part, i.e.~\eqref{eq:CauchyProblem}.
            
            It suffices to use \cref{th:CompactnessInC} in the form mentioned in \cref{rem:complcont}. As $(f_n)_n$ is weakly convergent to some $f\in \Leb^{2}(0,T_{0};H)\subseteq\Leb^{2}\big(0,T_{0};\Sob^{-1}(E)\big)$, by \cref{th:CompactnessInC}, we find $\overline{u} \in \mathcal{C}\bigl([0, T_{0}]; \Sob^{-1}(E)\bigr)$  such that  $u_n\to \overline{u}$ in $\mathcal{C}\bigl([0, T_0], \Sob^{-1}(E)\bigr)$ and $\overline{u}$ satisfies
                  \begin{equation*}
                    \overline{u}(t) = T(t) \overline{u_{0}} + \medint\int_{0}^{t} T(t-s) f(s) ds,
                  \end{equation*}
                  for all $t \in [0, T_{0}]$. Since $u_{n}(0)=u_{0}$ for all $n$, we deduce  $\overline{u_{0}}=u_0$. As $f\in L_2(0,T_0; H)$, in particular $\overline{u}\in \mathcal{C}\bigl([0,T_0];\operatorname{dom}(E_{-1})\bigr) = \mathcal{C}([0,T_0]; H)$.\qedhere
\end{enumerate}
\end{proof}

Before we move on to prove that the graph of $\Phi$ is weakly $\times$ weakly sequentially closed in $M \times \Leb^{2}(0,T_0;H) \times \Leb^{2}(0,T_0;\mathcal{H})$, we have to make a regularization argument involving the right-hand sides associated to the semigroup problem \eqref{eq:CauchyProblem}. For the convenience of the reader, we give a short recap of all convergences in the scenario that includes the convergence of the regularizations:
\begin{remark}[provisional summary]
  \label{th:Convergences}
When verifying the sequential weak $\times$ weak closedness of the graph of $\Phi$, we will have the following convergences:
\begin{alignat*}{5}
  \begin{aligned}
    (f_{n},g_{n})&\in \Leb^{2}(0,T_0;H)\times \Leb^{2}(0,T_0;\mathcal{H}) &&\text{with}\ (f_{n},g_{n})&\rightharpoonup\colon (f,g) &\quad\text{in}\ \Leb^{2},\\
    (\overline{f_{n}},\overline{g_{n}}) &\in \Sel F\bigl(S(f_{n},g_{n})\bigr) \times \Sel G\bigl(S(f_{n},g_{n})\bigr) &&\text{with}\ (\overline{f_{n}},\overline{g_{n}})&\rightharpoonup\colon (\overline{f},\overline{g}) &\quad\text{in}\ \Leb^{2}.
  \end{aligned}
\end{alignat*}
We apply a regularization procedure on the right-hand sides $f_{n}$ using the Yosida-approximations
\begin{equation*}
  f_{n,\lambda}\coloneq \lambda \mathcal{R}(\lambda, E)f_{n}\quad\text{and}\quad f_{\lambda}\coloneq \lambda \mathcal{R}(\lambda, E)f,
\end{equation*}
where $\mathcal{R}(\lambda, E)=(\lambda+E)^{-1}$ denotes the resolvent, which exists for large enough $\lambda >0$ appealing to standard semigrouph theory as $-E$ is a generator. Introducing \begin{equation*}
    \bigl(u_{n,\lambda}, v_{n,\lambda}\bigr) \coloneq S(f_{n,\lambda},g_{n}), \quad \bigl(u_{\lambda},v_{\lambda}\bigr)\coloneq S(f_{\lambda},g) \quad \text{and}\quad (u,v) \coloneq S(f,g),
\end{equation*}
we obtain the following convergences:
\begin{enumerate}[leftmargin = 6ex, label = (\roman*)]
  \item\label{I1} $\bigl(f_{n},g_{n}\bigr) \xrightharpoonup[n\to \infty]{} (f,g)$ in $\Leb^{2}(0,T_{0};H) \times \Leb^{2}(0,T_{0};\mathcal{H})$.
  \item\label{I2} $\bigl(\overline{f_{n}}, \overline{g_{n}}\bigr) \xrightharpoonup[n\to \infty]{} (\overline{f}, \overline{g})$ in $\Leb^{2}(0,T_{0};H) \times \Leb^{2}(0,T_{0};\mathcal{H})$.
  \item\label{I3} $u_{n} \xrightarrow[n\to \infty]{} u$ in $\mathcal{C}\bigl([0,T_{0}];\Sob^{-1}(E)\bigr)$ with $u \in \mathcal{C}\bigl([0,T_{0}];H\bigr)$ and\\
        $v_{n} \xrightarrow[n\to \infty]{} v$ in $\mathcal{C}\bigl([0,T_{0}];\mathcal{H}\bigr)$ with $v \in \mathcal{C}\bigl([0,T_{0}];\mathcal{H}\bigr)$.
  \item\label{I4} $f_{n,\lambda} \xrightharpoonup[n\to \infty]{} f_{\lambda}$ in $\Leb^{2}\bigl(0,T_{0};\operatorname{dom}(E)\bigr)$.
  \item\label{I5} $u_{n,\lambda} \xrightarrow[n\to \infty]{} u_{\lambda}$ in $\mathcal{C}\bigl([0,T_{0}];H\bigr)$ with $u_{\lambda} \in \mathcal{C}\bigl([0,T_{0}];\operatorname{dom}(E)\bigr)$ and\\
        $v_{n,\lambda} \xrightarrow[n\to \infty]{} v_{\lambda}$ in $\mathcal{C}\bigl([0,T_{0}];\mathcal{H}\bigr)$ with $v_{\lambda} \in \mathcal{C}\bigl([0,T_{0}];\mathcal{H}\bigr)$.
  \item\label{I6} $f_{\lambda} \xrightarrow[\lambda \to \infty]{} f$ in $\Leb^{2}(0,T_{0};H)$.
  \item\label{I7} $u_{\lambda} \xrightarrow[\lambda \to \infty]{} u$ in $\mathcal{C}\bigl([0,T_{0}];H\bigr)$ and\\
        $v_{\lambda} \xrightarrow[\lambda \to \infty]{} v$ in $\mathcal{C}\bigl([0,T_{0}];\mathcal{H}\bigr)$.
  \item\label{I8} $f_{n,\lambda}\xrightarrow[\lambda \to \infty]{} f_{n}$ in $\Leb^{2}(0,T_{0};H)$.
  \item\label{I9} $u_{n,\lambda} \xrightarrow[\lambda \to \infty]{} u_{n}$ in $\Leb^{2}(0,T_{0};H)$ and\\
        $v_{n,\lambda} \xrightarrow[\lambda \to \infty]{} v_{n}$ in $\mathcal{C}\bigl([0,T_{0}];\mathcal{H}\bigr)$.
\end{enumerate}
\end{remark}
\begin{proof}\phantom{.}
  \begin{itemize}[leftmargin=4ex]
  \item \Cref{I1} and \cref{I2} hold by assumption.
  \item \Cref{I3} holds due to \cref{th:ConvergenceOfSolutions}.
  \item \Cref{I4} holds because the Yosida-approximation regularizes the functions, in particular $f_{n,\lambda} \in \Leb^{2}\bigl(0,T_{m}; \operatorname{dom}(E)\bigr)$. The convergence is easily verified:
  For $\varphi \in \Leb^{2}\bigl(0,T_{m}; \Sob^{-1}(E)\bigr) = \Leb^{2}\bigl(0,T; \operatorname{dom}(E)\bigr)'$ holds:
  \begin{align*}
    \dualprod[\big]{\lambda \mathcal{R}(\lambda, E)f_{n}}{\varphi}
    &= \dualprod[\big]{f_{n}}{\underbrace{\lambda \mathcal{R}(\lambda, E')\varphi}_{\in \Leb^{2}(0,T;H')}}\\
    &\rightarrow \dualprod[\big]{f}{\lambda \mathcal{R}(\lambda, E')\varphi}
    = \dualprod[\big]{\lambda \mathcal{R}(\lambda, E)f}{\varphi}
  \end{align*}
  \item For \cref{I5} we first appeal to \cref{th:SemiGroupSolution} to see that $u_{n,\lambda} \in \mathcal{C}\bigl([0,T_{m}];\operatorname{dom}(E)\bigr)$. Then an application of \cref{th:ConvergenceOfSolutions} shows the first claim, where notably $H$ is replaced by $\operatorname{dom}(E)$.\\
  Appealing to \cref{th:MonotoneSolution}, we see that $v_{n,\lambda} \in \mathcal{C}\bigl([0,T_{0}];\mathcal{H}\bigr)$. An application of \cref{th:UniformConvergence} yields the second claim.
  \item \Cref{I6} and \cref{I8} are well-known properties of the Yosida-approximation.
  \item The first part of \cref{I7} follows from Duhamel's formula and \cref{I6}. The second part of \cref{I7} follows from \cref{th:UniformConvergence} again.
  \item For \cref{I9} we first verify convergence in $\Leb^{2}(0,T_{0};H)$:
  \begin{align*}
    \norm{u_{n,\lambda} - u_{n}}_{2}^{2}
    &= \medint\int_{0}^{T_{0}} \norm[\Big]{\medint\int_{0}^{t} T(t-s)\bigl[\lambda \mathcal{R}(\lambda,E)f_{n}-f_{n}\bigr](s) \dx[s]}^{2}\dx[t]\\
    &\leq \medint\int_{0}^{T_{0}} t \medint\int_{0}^{t} \norm[\big]{T(t-s)\bigl[\lambda \mathcal{R}(\lambda,E)f_{n}-f_{n}\bigr](s)}^{2} \dx[s]  \dx[t]\\
    &\leq \medint\int_{0}^{T_{0}} t \widetilde{C}^{2}\medint\int_{0}^{T_{0}} \norm[\big]{\bigl[\lambda \mathcal{R}(\lambda,E)f_{n}-f_{n}\bigr](s)}^{2} \dx[s]  \dx[t]\\
    &= \tfrac{T_{0}^{2}\widetilde{C}^{2}}{2} \norm{f_{n,\lambda} - f_{n}}_{\Leb^{2}(0,T_{0};H)}^{2}
  \end{align*}
  The second part follows immediately from the realization $\lambda \mathcal{R}(\lambda, E)f_{n} \xrightarrow[\lambda \to \infty]{} f_{n}$ and \cref{th:UniformConvergence} again.\qedhere
  \end{itemize}
\end{proof}

The missing ingredient is now to prove the existence of measurable selections
\begin{align*}
  \bigl(\overline{f_{n,\lambda}}, \overline{g_{n,\lambda}}\bigr) &\in \Sel F(u_{n,\lambda},v_{n}) \times \Sel G(u_{n,\lambda},v_{n})\\
  \intertext{suitably close to given (measurable) selections}
  \bigl(\overline{f_{n}}, \overline{g_{n}}\bigr) &\in \Sel F(u_{n},v_{n}) \times \Sel G(u_{n},v_{n})\text{,}
\end{align*}
for $(u_{n},v_{n}) = S(f_{n},g_{n})$ with $(f_{n},g_{n}) \in M$ from \cref{eq:M}. For that \cref{th:CloseSelections} will come into play. It is only  here (!), that the increased regularity assumption of Hausdorff-continuity and half weak continuity will be used. 
\begin{remark} In the proof of the following result we will have occasion to apply \cref{lem:weakstrong} (and the subsequent remark). To that end, note that $S_{\lambda} \coloneqq (\lambda+\lambda_0) \mathcal{R}(\lambda+\lambda_0,E)$ fits the assumptions needed there. Indeed, continuity is plain from the resolvent equation. Strong convergence of the adjoints follows from the fact that the adjoint of $E$ is the generator of the adjoint semigroup and thus also adheres to the principles and properties of the Yosida-approximation.
\end{remark}

\begin{lemma}
  \label{th:WeakSeqMeasurability}
  The graph of $\Phi$ from \eqref{eq:FixedPointMap} is weakly $\times$ weakly sequentially closed in $\Leb^{2}(0,T_{0};H)\times \Leb^{2}(0,T_{0};\mathcal{H})$.\footnote{We understand the term ``graph'' is ambiguous here, but it is this notion that is frequently applied in the literature, see, e.g.~\cite{Vrabie1995}.}
\end{lemma}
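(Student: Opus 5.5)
We have to show: if $(f_n,g_n)\rightharpoonup(f,g)$ in $M$ and $(\overline{f_n},\overline{g_n})\in\Phi(f_n,g_n)$ with $(\overline{f_n},\overline{g_n})\rightharpoonup(\overline f,\overline g)$, then $\overline f\in\Sel F(u,v)$ and $\overline g\in\Sel G(u,v)$, where $(u,v)=S(f,g)$. If we already knew $u_n\to u$ pointwise in $H$, \cref{th:SelectionByWUS} would finish the proof, since Hausdorff-continuous bcc maps are upper semicontinuous and hence weakly upper semicontinuous. However, \cref{th:ConvergenceOfSolutions} only gives $u_n\to u$ in $\mathcal{C}([0,T_0];\Sob^{-1}(E))$, while $v_n\to v$ uniformly in $\mathcal{H}$. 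The plan is to pass through the Yosida-regularised solutions $u_{n,\lambda}$, which do converge uniformly in $H$ by \cref{th:Convergences}~\ref{I5}, and to move the selections from $u_n$ to $u_{n,\lambda}$ using the approximation results.

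\textbf{Step 1: replace $u_n$ by $u_{n,\lambda}$.} The key identity is $u_{n,\lambda}=S_\lambda u_n$ up to the initial term, with $S_\lambda=\lambda\mathcal{R}(\lambda,E)$ (shifted as in the preceding remark). Writing $u_n=T(\cdot)u_0+w_n$, the Duhamel part satisfies $w_{n,\lambda}=S_\lambda w_n$, because the resolvent commutes with the semigroup. The family $(w_n)_n$ is bounded in $\mathcal{C}([0,T_0];H)$ by \eqref{eq:BoundedSolutions}, and $(v_n)_n$ converges uniformly. Hence the remark after \cref{lem:weakstrong}, which rests on half weak continuity of $F$ and $G$, gives
\[
\sup_{n,t}\Hd\bigl(F(u_{n,\lambda}(t),v_n(t)),F(u_n(t),v_n(t))\bigr)\le\varepsilon
\]
for all $\lambda\ge\lambda_0(\varepsilon)$, and likewise for $G$. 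The initial term $T(t)u_0$ versus $T(t)S_\lambda u_0$ converges uniformly on the compact orbit, so it is handled by Hausdorff-continuity together with \cref{lem:easycons}. \Cref{rem:actualthm3.3.1}, applied with the uniform $\Hd$-closeness above, then provides selections $\overline{f_{n,\lambda}}\in\Sel F(u_{n,\lambda},v_n)$ and $\overline{g_{n,\lambda}}\in\Sel G(u_{n,\lambda},v_n)$ with $\|\overline{f_{n,\lambda}}-\overline{f_n}\|_{\Leb^2}\le\varepsilon$, uniformly in $n$.

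\textbf{Step 2: pass $n\to\infty$ for fixed $\lambda$.} The selections $\overline{f_{n,\lambda}}$ are bounded, so along a subsequence $\overline{f_{n,\lambda}}\rightharpoonup h_\lambda$ in $\Leb^2$. Since $u_{n,\lambda}\to u_\lambda$ and $v_n\to v$ uniformly, \cref{th:SelectionByWUS} gives $h_\lambda\in\Sel F(u_\lambda,v)$. By weak lower semicontinuity of the norm, $\|h_\lambda-\overline f\|\le\liminf_n\|\overline{f_{n,\lambda}}-\overline{f_n}\|\le\varepsilon$.

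\textbf{Step 3: pass $\lambda\to\infty$.} By \cref{th:Convergences}~\ref{I7}, $u_\lambda\to u$ uniformly in $H$. Take $\varepsilon=1/k$ and $\lambda_k\to\infty$; then $h_{\lambda_k}\to\overline f$ strongly in $\Leb^2$. A subsequence converges a.e., and upper semicontinuity of $F$ with closed values then yields $\overline f(t)\in F(u(t),v(t))$ for a.e.\ $t$. Alternatively, \cref{th:SelectionByWUS} applies directly. The argument for $G$ is verbatim.

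The main obstacle is Step 1: obtaining the $\Hd$-closeness uniformly in $n$ and $t$ although $(u_n)_n$ is merely bounded, not compact, in $H$. This is precisely where half weak continuity and \cref{lem:weakstrong} are needed, together with the careful treatment of the initial datum term and the identity $u_{n,\lambda}=S_\lambda u_n$.
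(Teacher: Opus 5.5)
\textbf{Verdict: Step 1 has a genuine gap, and it is inherited from the paper.} Your Step 1 is exactly the first step of the paper's proof, and that is where the argument breaks. The uniform estimate \eqref{eq:need} does not follow from half weak continuity. The proof of \cref{lem:weakstrong} deduces joint continuity on $(B_H[0,R'],\sigma(H,H'))\times\mathcal{K}$ from separate continuity in each variable, which is false. In fact, under the stated hypotheses the lemma itself fails.

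Here is a counterexample. Take:
\begin{itemize}
\item $E=-\iu\Delta$ with real orthonormal Dirichlet eigenfunctions $-\Delta e_k=\mu_k e_k$;
\item $\mathcal{H}=\R$ and $\phi^t\equiv 0$;
\item continuous bumps $0\leq\psi_k\leq 1$ with $\psi_k(1/k)=1$ and pairwise disjoint supports in $(0,\infty)$ accumulating only at $0$;
\item a unit vector $\varphi_0\in H$, and
\[
  \Psi(x,y)\coloneqq \sum_{k}\psi_k(y)\,\abs{\langle x,e_k\rangle_{\C}}^2,\qquad F(x,y)\coloneqq\{\Psi(x,y)\varphi_0\},\qquad G\equiv\{0\}.
\]
\end{itemize}
Then $F$ is bcc, bounded, Hausdorff-continuous and half weakly continuous, since for fixed $y$ at most one summand is active. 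Yet $\Psi(e_k,1/k)=1\not\to 0=\Psi(0,0)$, so joint continuity fails.

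Now choose $u_0=v_0=0$, $f_n\coloneqq cT(\cdot)e_{k_n}\rightharpoonup 0$ and $g_n\coloneqq\chi_{[0,1/k_n]}\to 0$, with $c>0$ small so that everything lies in $M$. Then:
\begin{itemize}
\item $u_n(t)=ctT(t)e_{k_n}$ and $v_n(t)=\min\{t,1/k_n\}$;
\item the unique selections $\overline{f_n}(t)=c^2t^2\psi_{k_n}(v_n(t))\varphi_0$ converge in $\Leb^2$ to $c^2t^2\varphi_0$;
\item but $\Sel F(u,v)=\Sel F(0,0)=\{0\}$.
\end{itemize}
In your Step 1, $u_{n,\lambda}(t)=\tfrac{\lambda}{\lambda+\iu\mu_{k_n}}u_n(t)$. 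So for every fixed $\lambda$ and $t>0$, the Hausdorff distance you need to be small tends to $c^2t^2$ as $n\to\infty$. No $\lambda_0$ exists, and no other argument can close this step.

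\textbf{What is needed.} The missing ingredient is a joint hypothesis: $F$ and $G$ Hausdorff-continuous on $(B_H[0,R],\sigma(H,H'))\times\mathcal{H}$ for every $R>0$. Under it, \cref{lem:weakstrong} and your Step 1 become correct. But the Yosida detour is then unnecessary:
\begin{itemize}
\item by Duhamel's formula, $u_n(t)\rightharpoonup u(t)$ in $H$ for every $t$;
\item the $u_n(t)$ are bounded by \eqref{eq:BoundedSolutions}, and $v_n\to v$ uniformly;
\item so \cref{th:SelectionByWUS} applies directly with $U=(B_H[0,R],\sigma(H,H'))\times\mathcal{H}$.
\end{itemize}

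\textbf{Comparison with the paper's route.} Granting Step 1, your Steps 2--3 are a sound and leaner alternative to the rest of the paper's proof. The paper instead applies \cref{th:CloseSelections} twice more (from $u_{n,\lambda_1}$ to $u_{\lambda_1}$, then to $u$). It then concludes via weak closedness of $\Sel F(u,v)$ and a $3\varepsilon$-argument.

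\textbf{Two smaller points.}
\begin{itemize}
\item Derive the weak upper semicontinuity required by \cref{th:SelectionByWUS} from weak compactness of the bcc values: a weakly open $D\supseteq F(z)$ contains $F(z)+B(0,\eta)$ for some $\eta>0$. Do not derive it from norm upper semicontinuity, which Hausdorff-continuity does not imply for non-compact values.
\item With the paper's definition, $u_{n,\lambda}(0)=u_0$. Hence $u_{n,\lambda}-u_n=(S_\lambda-I)w_n$, and no $S_\lambda u_0$ term arises.
\end{itemize}
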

\begin{proof}
  Let $(f_{n},g_{n})\in M$; $(u_{n},v_{n}) \coloneqq S(f_{n},g_{n})$. Let $(\overline{f_{n}},\overline{g_{n}}) \in \Sel F(u_{n},v_{n}) \times \Sel G(u_{n},v_{n})$ with
  \begin{alignat*}{5}
  \begin{aligned}
    (f_{n},g_{n})\rightharpoonup (f,g) &\quad\text{in}\ \Leb^{2}(0,T_0;H)\times \Leb^{2}(0,T_0;\mathcal{H}), \\
   (\overline{f_{n}},\overline{g_{n}})\rightharpoonup (\overline{f},\overline{g}) &\quad\text{in}\ \Leb^{2}(0,T_0;H)\times \Leb^{2}(0,T_0;\mathcal{H}),
  \end{aligned}
\end{alignat*}
and $(u,v)\coloneqq S(f,g)$.

  Our aim is to show that $(\overline{f},\overline{g}) \in \Phi(f,g)= \Sel F(u,v) \times \Sel G(u,v)$. For this,
   we define $f_{n,\lambda}, f_{\lambda}, u_{n,\lambda}$ and $u_{\lambda}$ as in \cref{th:Convergences}. We aim to apply  \cref{th:CloseSelections} to construct sufficiently close selections for $\overline{f_n}$ and $\overline{g_n}$:

 To start off, we note that since $u_\lambda\to u$ uniformly, for all $\delta>0$, there is $\lambda_0>0$ such that for all $\lambda\geq \lambda_0$ we have
   \[
     \sup_{t\in [0,T_0]} \Hd \big(F(u_\lambda(t),v(t)),F(u(t),v(t))\big)\leq \delta.
   \]
   Moreover, by \eqref{eq:need}, we find $\lambda_1\geq \lambda_0$ such that
   \[
\forall \lambda \!\geq \!\lambda_1, n \!\in \!\N, t \!\in \![0,T_0]\colon \Hd\big(F(u_{n,\lambda}(t),v_n(t)),F(u_n(t),v_n(t))\big)\leq \delta.
\]

   Next, let $\varepsilon>0$. By virtue of \cref{th:CloseSelections} (see also \cref{rem:actualthm3.3.1}) and by choosing $\delta>0$ accordingly (and correspondingly $\lambda_1\geq\lambda_0>0$), we find  $\overline{f_{n,\lambda_1}}\in \Sel F(u_{n,\lambda_1},v)$ with
   \[
       \|\overline{f_{n,\lambda_1}} - \overline{f_n}\|\leq \varepsilon.
   \] 
   By \cref{th:Convergences} \cref{I5}, $u_{n,\lambda_1}\to u_{\lambda_1}$ in $\mathcal{C}([0,T_0];H)$. Hence, by \cref{th:CloseSelections}  (see also \cref{rem:actualthm3.3.1}), we find $n_0\in \N$ such that for all $n\geq n_0$ there is $\overline{f_{\lambda_1}^{(n)}} \in \Sel F(u_{\lambda_1},v)$ such that
   \begin{equation*}
    \norm[\big]{\overline{f_{n,\lambda_1}} - \overline{f_{\lambda_1}^{(n)}}}_{\Leb^{2}(0,T_{0};H)}\leq \varepsilon.
  \end{equation*}
  From yet another application of \cref{th:CloseSelections} and \cref{rem:actualthm3.3.1} it follows that, for all $n\in \N$, we find $\overline{f^{(n)}}\in \Sel F(u,v)$ such that
  \begin{equation*}
    \norm[\big]{\overline{f^{(n)}} - \overline{f_{\lambda_1}^{(n)}}}_{\Leb^{2}(0,T_{0};H)}\leq \varepsilon.
  \end{equation*}

  To conclude, we can assume without loss of generality that all considered sequences (in $n$) converge weakly as $n\to\infty$. Then $\widetilde{f} \coloneqq \lim_{n\to\infty} \overline{f^{(n)}} \in  \Sel F(u,v) $ as $\Sel F(u,v) $ is (weakly) closed, by \cref{rem:wclosed}. Moreover, by the liminf-inequality for norms of weak limits, it follows that
  \[
     \| \widetilde{f} - \overline{f}\|\leq 3\varepsilon.
  \]
  As $\varepsilon>0$ was arbitrary, it follows that 
  \[\overline{f} \in \bigcap_{\varepsilon>0} \big(\Sel F(u,v)+B_{H}[0,\varepsilon]\big) =\overline{\Sel F(u,v)} = \Sel F(u,v). \]
  Quite similarly (but easier) it follows that $\overline{g} \in \Sel G(u,v)$.
\end{proof}

The proof of our existence theorem is now a simple collection of statements:
\begin{proof}[Proof of \cref{th:LocalExist}]
  The fixed point map $\Phi$ from \eqref{eq:FixedPointMap} is well-defined according to \cref{th:PhiSelfmap} and \ref{th:PropertiesOfPhi}. \Cref{th:PropertiesOfPhi} additionally shows that $\Phi$ is bcc. \Cref{th:WeakSeqMeasurability} finally shows that the graph of $\Phi$ is weakly $\times$ weakly sequentially closed in $M\times \Leb^{2}(0,T_{0};H) \times \Leb^{2}(0,T_{0};\mathcal{H})$.\\
  Applying the multivalued fixed point \cref{th:FixedPointTheorem}, we can conclude the existence of a fixed point in $M$, that is, by construction, a pair $(f,g) \in M$ such that $(f,g) \in \Phi(f,g)$ and consequently $(u,v) = S(f,g)$  is one weak solution of the system \eqref{eq:SingleValuedProblem}.
  By construction (see \eqref{eq:M}),
  \begin{equation*}
    M \subseteq \Leb^{2}(0, T_0; H) \times \Leb^{2}(0, T_0; \mathcal{H}).
  \end{equation*}
  \cite[thm.~3.6]{Brezis1973} ensures that  $(u,v) = S(f,g)$ is a strong solution of \eqref{eq:SingleValuedProblem}
  and the claim follows.\qedhere
\end{proof}

%%%%%%%%%%%%%%%%%%%%%%%%%%%%%%
%    S E C T I O N
\subsection{Global existence}
\label{sec:GlobalExist}

An investigation of the proof of \cref{th:LocalExist} (or carefully reading \cref{sec:LocalExist}) reveals that the proof holds up for any right-hand sides $(f,g)$ that remain bounded in $\Leb^{2}(0,T;H)$-norm and $\Leb^{2}(0,T;\mathcal{H})$-norm, i.e., selections that remain bounded as $T$ varies give rise to solutions $(u,v) \in \mathcal{C}\bigl([0,T];H\bigr)\times \mathcal{C}\bigl([0,T];\mathcal{H}\bigr)$. Hence, if the maximal existence interval $[0,T_{\mathrm{max}})$ is finite, then necessarily,
\begin{equation}
  \label{eq:Blowup}
  \lim_{t\uparrow T_{\max}}\norm{u(t)}_{H} = \infty \qquad \text{or} \qquad \lim_{t\uparrow T_{\max}}\norm{v(t)}_{\mathcal{H}} = \infty\text{.}
\end{equation}
Conversely, such a blowup prevents a continuable solution by contradiction. Consequently, we have the following theorem:
\begin{theorem}[maximal existence interval]
  \label{th:MaxExist}
  Under the assumptions of \cref{th:LocalExist} the solution $(u,v)$ provided by \cref{th:LocalExist} exists up to a maximal time $T_{\max}\in (0,\infty ]$. If $T_{
  \max}$ is finite, \eqref{eq:Blowup} holds.
\end{theorem}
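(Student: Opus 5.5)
The plan is a standard continuation argument via Zorn's lemma, combined with the fact, visible in the proofs of \cref{th:LocalExistSingleValued}, \cref{th:PhiSelfmap} and \cref{th:LocalExist}, that the local existence time $T_0$ depends only on a bound for the initial data. First I will make this quantitative. Inspecting \eqref{eq:m}, \eqref{eq:Balls} and the choice $T_0\le (m/r)^2$ in \cref{th:PhiSelfmap}, one sees that $T_0$ depends only on $\beta=\sup_{(a,b)\in B}\max\{\|a\|,\|b\|\}$ (through $m$), on $\widetilde C$, and on the bound $r$ of $F,G$ on the ball of radius $2m-1$ (using $\sqrt{T_0}\,m\le m$ for $T_0\le 1$). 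Hence there is a function $\tau\colon[0,\infty)\to(0,\infty)$, nonincreasing in $\beta$, such that every initial value in $B_H[0,\beta]\times B_{\mathcal H}[0,\beta]$ admits a strong solution on $[0,\tau(\beta)]$. Because $A$ is nonautonomous, I must also check that the local result can be restarted at an arbitrary time $s$. This means applying \cref{th:LocalExist} to the shifted family $\phi^{s+\cdot}$, which still satisfies \cref{ass:A} with shifted $g_n,h_n$ and the same constants. The semigroup part is autonomous, so shifting it is harmless.

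Second, I will concatenate solutions. If $(u,v)$ solves \eqref{eq:IVP} on $[0,s]$ and $(\tilde u,\tilde v)$ solves the shifted problem on $[0,\tau]$ with initial value $(u(s),v(s))$, then the glued pair is continuous. Its semigroup component satisfies Duhamel's formula on $[0,s+\tau]$ by the semigroup property. Its monotone component is a strong solution away from $0$. Measurable selections also glue, so the glued pair is again a solution. I then order the set of solutions defined on intervals $[0,s]$ or $[0,s)$ by extension. Every chain has an upper bound, namely its union, since continuity and the a.e.\ inclusions are local in time. Zorn's lemma therefore gives a maximal solution on an interval $J$ with $\sup J=T_{\max}\in(0,\infty]$. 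The gluing step shows that $J$ cannot be closed, so $J=[0,T_{\max})$.

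Third, I prove the blow-up alternative \eqref{eq:Blowup}, which is the main obstacle. Suppose $T_{\max}<\infty$. If $\|u(t)\|+\|v(t)\|\le \beta$ for all $t<T_{\max}$, then restarting from $t_0=T_{\max}-\tau(\beta)/2$ extends the solution beyond $T_{\max}$, contradicting maximality. Hence $\limsup_{t\uparrow T_{\max}}(\|u(t)\|+\|v(t)\|)=\infty$. The difficulty is upgrading this limsup to the stated limit. I would argue as follows. If the limit were not infinite, there would be $t_k\uparrow T_{\max}$ with $\|u(t_k)\|+\|v(t_k)\|\le\beta$. Choosing $k$ with $T_{\max}-t_k<\tau(\beta)$, I restart at $t_k$ to obtain a solution on $[t_k,t_k+\tau(\beta)]$ and glue it to $(u,v)|_{[0,t_k]}$. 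This gives a solution on a strictly longer interval. However, it is a different solution, since solutions need not be unique, so it does not directly contradict maximality of the given $(u,v)$. To handle this, I will interpret ``the solution'' in \cref{th:MaxExist} as a maximal solution built so that the alternative holds. Concretely, I will run the Zorn argument on the class of solutions, and at each restart use the estimate \eqref{eq:BoundedSolutions}: on $[t_k,t_k+\tau]$ every solution satisfies $\|u\|,\|v\|\le m-1+\sqrt{\tau}\,m$. Thus along any solution, boundedness at times $t_k$ arbitrarily close to $T_{\max}$ propagates a uniform bound to all $t\in[t_k,T_{\max})$. This shows that $\liminf<\infty$ forces $\limsup<\infty$, which reduces the liminf case to the limsup case already ruled out. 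Applying this to both components yields \eqref{eq:Blowup}, with the understanding, as in the statement, that at least one of the norms is unbounded.
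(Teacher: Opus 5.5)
The step that is supposed to contradict maximality does not work. In the bounded case you restart at $t_0=T_{\max}-\tau(\beta)/2<T_{\max}$ and glue the new solution to $(u,v)|_{[0,t_0]}$. Since \eqref{eq:IVP} has no uniqueness, this pair need not agree with $(u,v)$ on $[t_0,T_{\max})$. So it is not an extension of $(u,v)$ in your Zorn order, and it contradicts nothing. This is exactly the objection you raise yourself in the liminf case. Because you reduce that case to this one, the blow-up alternative remains unproved.

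What is missing is an argument that continues $(u,v)$ itself. Suppose both norms are bounded on $[0,T_{\max})$. Since $F,G$ map bounded sets to bounded sets, the associated selections $f,g$ are essentially bounded on $(0,T_{\max})$. Duhamel's formula then extends $u$ continuously to $[0,T_{\max}]$. The strong solution on $[0,T_{\max}]$ with forcing $g$ from \cref{th:MonotoneSolution} coincides with $v$ on every $[0,s]$, $s<T_{\max}$, by uniqueness of the single-valued problem. This gives a solution on the closed interval $[0,T_{\max}]$, which strictly extends the maximal one. The same uniqueness argument is also what justifies absolute continuity of the glued $v$ across a junction time; ``strong solution away from $0$'' does not.

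Your propagated bound $m-1+\sqrt{\tau}\,m$ for an arbitrary solution also does not follow from \eqref{eq:BoundedSolutions} alone, because $\|f\|_{\Leb^2}$ is not controlled a priori. You need a continuity argument. As long as both norms stay below $2m$, the selections are bounded by the bound $r$ of $F,G$ on that ball. For $\tau$ small, depending only on $\beta$, this keeps the norms below $m$, so they cannot reach $2m$ on $[t_k,t_k+\tau]$.

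Once repaired, your argument proves $\|u(t)\|_H+\|v(t)\|_{\mathcal{H}}\to\infty$ as $t\uparrow T_{\max}$, and nothing componentwise. It cannot be applied ``to both components'' separately, since a restart needs both norms bounded at the same time. You should say this explicitly rather than read it into the statement, because \eqref{eq:Blowup} as literally written is false. Take $H=\mathcal{H}=\R$, $E=0$, $\phi^t\equiv 0$, $\epsilon>0$, $\rho=\sqrt{u^2+v^2}$, and $F(u,v)=\{\rho(\epsilon u-v)\}$, $G(u,v)=\{\rho(u+\epsilon v)\}$. All hypotheses of \cref{th:LocalExist} hold and solutions are unique. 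In polar coordinates $\rho'=\epsilon\rho^2$ and $\theta'=\rho$. So $\rho$ blows up at $T_{\max}=1/(\epsilon\rho(0))$ while $\theta\to\infty$, and both $u=\rho\cos\theta$ and $v=\rho\sin\theta$ vanish at times accumulating at $T_{\max}$. The paper's own justification (the local construction works as long as the selections stay bounded) is the same continuation idea, only sketched, and it likewise yields unboundedness of the pair rather than the componentwise limits claimed.
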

We point out, that \cref{th:MaxExist} for the case $T=\infty$ does only state that $u,v$ are continuous functions on the half-line and says nothing about (square-) integrability of $u,v$. We now move on to give a criterion for global existence of solutions. Following the standard route in solution theory for differential equations, this can be done by prescribing a linear growth bound for the right-hand sides and making use of Gr\"onwall's inequality.
\begin{theorem}[global existence]
  \label{th:GlobalExist}
  Under the assumptions of \cref{th:LocalExist}, let
  additionally $F$ and $G$ satisfy a linear growth bound. Then the solution $(u,v)$ provided by \cref{th:LocalExist} is global.\footnote{i.e., the strong solution $(u,v)$ of \eqref{eq:IVP} is defined on $[0, T_m]$.}
\end{theorem}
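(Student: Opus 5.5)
By \cref{th:MaxExist}, the solution $(u,v)$ from \cref{th:LocalExist} lives on a maximal interval $[0,T_{\max})$, and if $T_{\max}\le T_m$ were finite the blow-up alternative \eqref{eq:Blowup} would hold. It therefore suffices to produce an a priori bound: on every $[0,T]$ with $T<T_{\max}$, $T\le T_m$, the quantity $\norm{u(t)}_H+\norm{v(t)}_{\mathcal{H}}$ is bounded by a constant depending only on $T_m$, $u_0$, $v_0$ and the growth constants $a,b,c$. This rules out \eqref{eq:Blowup} and gives $T_{\max}>T_m$, i.e.\ a solution on $[0,T_m]$.

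For the estimates, write $(u,v)=S(f,g)$ with $f\in\Sel F(u,v)$ and $g\in\Sel G(u,v)$. The linear growth bound gives, for a.e.\ $s$,
\[
  \max\{\norm{f(s)}_H,\norm{g(s)}_{\mathcal{H}}\}\le a\norm{u(s)}_H+b\norm{v(s)}_{\mathcal{H}}+c.
\]
For the semigroup component, Duhamel's formula (\cref{th:SemiGroupSolution}) together with $\widetilde C=\sup_{s\in[0,T_m]}\norm{T(s)}$ yields
\[
  \norm{u(t)}_H\le \widetilde C\norm{u_0}_H+\widetilde C\int_0^t\norm{f(s)}_H\dx[s].
\]
For the monotone component, I follow step (ii) of the proof of \cref{th:LocalExistSingleValued}: monotonicity of $A(t)$ together with $A(t)0=0$, energy integration, the limit $\delta\downarrow0$, and \cref{lem:elem} give
\[
  \norm{v(t)}_{\mathcal{H}}\le\norm{v_0}_{\mathcal{H}}+\int_0^t\norm{g(s)}_{\mathcal{H}}\dx[s].
\]
Adding the two estimates for $\phi(t)\coloneqq\norm{u(t)}_H+\norm{v(t)}_{\mathcal{H}}$, which is continuous on $[0,T]$, gives
\[
  \phi(t)\le C_0+2\max\{\widetilde C,1\}\int_0^t\bigl(\max\{a,b\}\,\phi(s)+c\bigr)\dx[s].
\]
Gronwall's inequality then bounds $\phi(t)\le (C_0+C_1T_m)\e^{C_2T_m}$ uniformly in $T<T_{\max}$.

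The main obstacle is a subtlety in the continuation argument rather than the estimate itself. \Cref{th:LocalExist} gives a uniform existence time only for initial data in a bounded set, with the semigroup started from $u(\tau)$ at restart times. The bound on $\phi$ keeps all restart data in a fixed ball $B$, so the local time $T_0(B)$ is uniform and finitely many restarts cover $[0,T_m]$. To do this rigorously, I will concatenate the solutions, checking that the semigroup part is compatible via the identity $T(t+\tau)=T(t)T(\tau)$ in Duhamel's formula, and that the nonautonomous part restarts with the shifted family $\phi^{\tau+\cdot}$, which still satisfies \cref{ass:A}. Since the a priori bound holds on each piece, the patched selection lies in $\Leb^2(0,T_m)$, and the patched pair is a strong solution on $[0,T_m]$.
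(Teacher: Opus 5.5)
Your proposal is correct. Its core estimate is the paper's: Duhamel's formula with the linear growth bound for $u$, the energy inequality using $\dualprod{A(t)v}{v}\ge 0$ for $v$, summation, and Gr\"onwall's lemma.

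It differs from the paper in two places.
\begin{enumerate}
\item For the $v$-part you reuse \cref{lem:elem}, exactly as in \cref{th:LocalExistSingleValued}, to get $\norm{v(t)}_{\mathcal{H}}\le\norm{v_0}_{\mathcal{H}}+\int_0^t\norm{g(s)}_{\mathcal{H}}\dx[s]$ directly. The paper instead takes the supremum over $[0,t]$ and absorbs it via $2xy\le\frac12x^2+2y^2$. Both give the same linear integral inequality.
\item The more substantive difference is continuation. The paper opens by extending the local solution to a maximal interval ``by virtue of unique solvability from \cref{th:LocalExist}''. But \cref{th:LocalExist} only yields existence, and uniqueness is not available for the inclusion. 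So the existence of a maximal solution, and with it the blow-up alternative of \cref{th:MaxExist}, is not actually justified there.
\end{enumerate}

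Your restart argument avoids both uniqueness and any Zorn-type construction, and it is the more rigorous route. It has four ingredients:
\begin{enumerate}
\item an a priori bound valid for every solution on every $[0,\tau]\subseteq[0,T_m]$;
\item hence restart data in a fixed ball;
\item a uniform step length $T_0$;
\item concatenation, via the semigroup law for $u$ and uniqueness of the single-valued monotone problem for the shifted family $\phi^{\tau+\cdot}$.
\end{enumerate}

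When you write it up, say explicitly that the uniformity of $T_0$ comes from the proof of \cref{th:LocalExist}, not from its statement. There the only constraint is $T_0\le(\nicefrac{m}{r})^2$ in \cref{th:PhiSelfmap}. Here $m$ depends only on $\widetilde C=\sup_{s\in[0,T_m]}\norm{T(s)}$ and the ball, and $r$ only on the bounds of $F,G$ on balls of radius at most $m-1+\sqrt{T_m}\,m$. Neither involves the restart time $\tau$ or the potentials.

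For the same reason, drop the wording ``$T_{\max}>T_m$'' in your first paragraph: the problem is posed only on $[0,T_m]$. The finitely many restarts are what give a solution on the closed interval $[0,T_m]$.
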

\begin{proof}
  By virtue of unique solvability from \cref{th:LocalExist}, we can extend the local solution $(u,v)$ to a maximal existence time $T_{\mathrm{max}}$. We show that $T_{\mathrm{max}}=T$, i.e., the solution is global, by proving an exponential growth bound. Let
  \begin{equation*}
    \bigl(f(t),g(t)\bigr) \in F\bigl(u(t),v(t)\bigr) \times G\bigl(u(t),v(t)\bigr) \quad \text{for a.e.}\ 0\leq t < T_{\mathrm{max}}\text{.}
  \end{equation*}
  We have to estimate the solution $(u,v)$ to problem \eqref{eq:IVP}:
  \begin{enumerate}[label = (\roman*), leftmargin = 5ex]
    \item We start with the monotonous part:
    
          The equation
          \begin{equation*}
            \tfrac{\dd}{\dx[t]}v(t) + A(t)v(t) = g(t)
          \end{equation*}
          holds as an equality in $\mathcal{H}$, appealing to \cref{th:MonotoneSolution} for almost all $0 \leq t < T_{\mathrm{max}}$, because $v$ is absolutely continuous on any $[\delta, T_{\mathrm{max}}]$, $0<\delta \leq T_{\mathrm{max}}$, and $v(t) \in \mathcal{D}$ for almost all $t \in [0,T_{\mathrm{max}})$ (appealing to \cref{th:MonotoneSolution}). Hence, we can test with $v(t)$ and integrate from $0$ to $0\leq t < T_{\mathrm{max}}$ to obtain
          \begin{equation*}
            \dualprod[\big]{\tfrac{\dd}{\dx[t]}v(t)}{v(t)} + \dualprod{A(t)v(t)}{v(t)} = \dualprod{g(t)}{v(t)}\text{.}
          \end{equation*}
          Consequently, using monotonicity of $A(t)$, we obtain
          \begin{equation}
            \tag{$\star$}
            \tfrac{1}{2} \norm{v(t)}_{\mathcal{H}}^{2} \leq \tfrac{1}{2}\norm{v_0}_{\mathcal{H}}^{2} + \medint\int_{0}^t \dualprod{g(s)}{v(s)}_{\mathcal{H},\mathcal{H}}\dx[s].
          \end{equation}
          Appealing to the linear growth bound of $G$, we have parameters $a,b,c>0$ as in \cref{def:LinearGrowth} s.t.\
          \begin{equation*}
            \norm{g(s)}_{\mathcal{H}} \leq a \norm{u(s)}_{H} + b\norm{v(s)}_{\mathcal{H}} + c.
          \end{equation*}
          Now we estimate using ($\star$):
          \begin{align*}
            \norm{v(t)}_{\mathcal{H}}^2
            &\leq \norm{v_0}^2 + 2 \medint\int_{0}^{t} \dualprod{g(s)}{v(s)}\dx[s] \\
            &\leq \norm{v_0}^2 + 2 \medint\int_{0}^{t}\bigl[a \norm{u(s)}_{H} + b \norm{v(s)}_{\mathcal{H}} + c\bigr] \norm{v(s)}_{\mathcal{H}} \dx[s].\\
            \intertext{Consequently, since $v \in \mathcal{C}([0,t];\mathcal{H})$, we have}
            \norm{v}_{\mathcal{C}([0,t];\mathcal{H})}^{2} &\leq \norm{v_0}^2 + 2 \medint\int_{0}^{t}\bigl[a \norm{u(s)}_{H} + b \norm{v(s)}_{\mathcal{H}} + c\bigr] \norm{v(s)}_{\mathcal{H}} \dx[s]\\
            &\leq \norm{v_0}^2 + 2 \norm{v}_{\mathcal{C}([0,t];H)}\medint\int_{0}^{t}\bigl[a \norm{u(s)}_{H} + b \norm{v(s)}_{\mathcal{H}} + c\bigr] \dx[s].\\
            \intertext{An application of $2xy\leq \tfrac{1}{2}x^2+2y^2$, $x,y\in \R$, shows}
            &\leq \norm{v_0}^2 + \tfrac{1}{2}\norm{v}^{2}_{\mathcal{C}([0,t];H)} + 2\Bigl(\medint\int_{0}^{t}\bigl[a \norm{u(s)}_{H} + b \norm{v(s)}_{\mathcal{H}} + c\bigr] \dx[s]\Bigr)^{2}\text{.}\\
            \intertext{Consequently, we obtain}
            \qquad\norm{v}_{\mathcal{C}([0,t];\mathcal{H})}^{2} &\leq 2\norm{v_0}^2 + 4\Bigl(\medint\int_{0}^{t}\bigl[a \norm{u(s)}_{H} + b \norm{v(s)}_{\mathcal{H}} + c\bigr] \dx[s]\Bigr)^{2}\text{.}
            \intertext{The simple inequality $\sqrt{a^{2}+b^{2}+c^{2}}\leq a + b +c$ for nonnegative $a,b,c$ implies}
            \norm{v}_{\mathcal{C}([0,t];\mathcal{H})} &\leq \sqrt{2}\norm{v_{0}} + 2cT_{\max} + 2\medint\int_{0}^t a \norm{u(s)}_{H} + b \norm{v(s)}_{\mathcal{H}}\dx[s].
          \end{align*}
    \item We continue with the semigroup part:
    
          Using the definition of a mild solution, we see for some $C\geq 0$ and all $0\leq t\leq T_{\mathrm{max}}$,
          \begin{equation*}
            \norm{u(t)}_{H} \leq C \norm{u_0}_{H} + C\medint\int_{0}^{t} \norm{f(s)}_{H}\dx[s].
          \end{equation*}
          Using the linear growth bound for $F$ and the fact that $f \in F(u,v)$, we can estimate
          \begin{equation*}
            \norm{f(s)}_{H} \leq a \norm{u(s)}_{H} + b \norm{v(s)}_{\mathcal{H}} + c
          \end{equation*}
          for some $a,b,c>0$ and we obtain $\widetilde{C}>0$ independent of $t$ s.t.
          \begin{equation*}
            \norm{u(t)}_{H} \leq \widetilde{C} + C\medint\int_{0}^t a \norm{u(s)}_{H} + b \norm{v(s)}_{\mathcal{H}}\dx[s].
          \end{equation*}
          It follows that $\norm{u}_{\mathcal{C}([0,t];H))} \leq \widetilde{C} + C\medint\int_{0}^t a \norm{u(s)}_{H} + b \norm{v(s)}_{\mathcal{H}}\dx[s]$.
  \end{enumerate}
  Both of these estimates together yield for some $K(T_{\mathrm{max}}),\rho\geq 0$
  \begin{equation*}
    \norm{u}_{\mathcal{C}([0,t];H)} + \norm{v}_{\mathcal{C}([0,t];\mathcal{H})} \leq K(T_{\mathrm{max}}) + \rho \medint\int_{0}^t [\norm{u(s)}_{H} + \norm{v(s)}_{\mathcal{H}}]\dx[s]\text{.}
  \end{equation*}
  Gr\"onwall's lemma thus concludes
  \begin{equation*}
    \norm{u}_{\mathcal{C}([0,t];H)} + \norm{v}_{\mathcal{C}([0,t];\mathcal{H})} \leq K(T_{\mathrm{max}}) e^{\rho t}.
  \end{equation*}
  This proves the claim.
\end{proof}

% STYLE
\vspace{1cm}

\begin{remark}\phantom{}
  \begin{enumerate}[label = (\roman*), leftmargin = 5ex]
          \item The assumptions of \cref{th:GlobalExist} can be weakened as far as the monotone part of problem \eqref{eq:IVP} is concerned. There, it suffices to assume the notion of ``positive sublinearity'' (see~\cite[def.~3.2.5]{Vrabie1995}) of $G$. The proof is done by the usual means, see~the proof of \cite[thm.~41]{SimsenWittbold2019} for the standard strategy. Since positive sublinearity seems to be of purely theoretical relevance, we made use of the more straightforward concept of a linear growth bound.
    
    \item The assumption on $F$ cannot be easily lowered to ``positive sublinear'' though as $E$ is not assumed to be maximal accretive. However, even then the argument does not become elementary: For the sake of the argument additionally assume that $E$ is maximal accretive.
    
          To test the equation $\tfrac{\dd}{\dx[t]}u(t) + E u(t) = g(t)$ with $u(t)$ we need to go to the level of the extrapolation space $\Sob^{-1}(E)$ again, where we can estimate
          \begin{align*}
            \tfrac{\dd}{\dx[t]}\norm{u(t)}_{\Sob^{-1}(E)}^{2}
            &= 2 \Re \dualprod{\tfrac{\dd}{\dx[t]}u(t)}{u(t)}_{\Sob^{-1}(E)}\\
            &= 2 \Re \underbrace{- \dualprod{Eu(t)}{u(t)}_{\Sob^{-1}(E)}}_{\leq 0} + 2\Re \dualprod{u(t)}{g(t)}_{\Sob^{-1}(E)}\text{,}
          \end{align*}
          where we use accretivity of $E_{-1}$. We infer
          \begin{equation*}
            \norm{u(t)}_{\Sob^{-1}(E)}^{2} \leq \norm{u_0}_{\Sob^{-1}(E)}^{2} + 2 \medint\int_{0}^{t} \dualprod{u(s)}{f(s)}_{\Sob^{-1}(E)}\dx[s]\text{.}
          \end{equation*}
          This estimate provides an opportunity to apply the notion of positive sublinearity. The problem however is that different to the proof of \cref{th:LocalExist}, where an extrapolation argument was used as well, here we cannot recover an estimate of $u(t)$ in $\norm{\argdot}_{H}$-norm. Consequently, the entire theorem would need to be formulated for $\Sob^{-1}(E)$ instead of $H$.\\
          This is possible, including local existence, by verbatim arguments as in the proof of \cref{th:LocalExist}, because $\Sob^{-1}(E)$ is a separable Hilbert space.
          
          The issue however is that problems of practical relevance are formulated for right-hand sides in $H$, e.g., for $H=\Leb^{2}(\Omega)$ and not the extrapolation space. Checking the corresponding norm estimates (for the $\Sob^{-1}(E)$ case) is also an impractical endeavor in practice. In any case, one needs to require a regularising $F$ in some way; otherwise one would run into similar problems, when proving weak$\times$weak-sequential closedness with $H=\Sob^{-1}(E)$.
    \item The case $T = \infty$ is allowed in \cref{th:GlobalExist}, but requires the semigroup generated by $E$ to be bounded. In that case, one obtains an estimate
          \begin{equation*}
            \norm{u}_{\mathcal{C}([0,t];H)} + \norm{v}_{\mathcal{C}([0,t];\mathcal{H})} \leq \hat{C}t e^{\rho t}
          \end{equation*}
          in the last line of the proof, where $\hat{C}$ is some constant.
  \end{enumerate}
\end{remark}

%%%%%%%%%%%%%%%%%%%%%%%%%%%%%%
%    S E C T I O N
\section{Examples}
\label{sec:Examples}

We first give a short example of a class of forcing terms $F$ and $G$ befitting our results. After that, we will present maximal monotone operators $A(\argdot)$ that satisfy \cref{ass:D} and stem from a generalized Schr\"odinger--Debye system. At the tail end of this section we outline several classical examples of partial differential equations that can be posed as Cauchy problems befitting the assumptions of \cref{sec:LocalExist} and \cref{sec:GlobalExist}.

%%%%%%%%%%%%%%%%%%%%%%%%%%%%%%
%    S U B S E C T I O N
\subsection{Suitable right-hand sides}
\label{subsec:RHS} Let $X$ be a separable Hilbert space and let $(e_{n})_{n \in \N}$ in $X$ be an ONB. For $n \in \N$ let $\varphi_{n}\colon H\times \mathcal{H} \to \mathbb{R}$ be continuous, bounded with $\bigl(\norm{\varphi_{n}}_{\infty}\bigr)_{n} \in \ell^{2}(\N)$ and such that $\varphi_n(\cdot, y)$ is weakly continuous on bounded sets for all $y\in \mathcal{H}$. Then define
\begin{equation*}
  F(u,v) \coloneq \overline{\mathrm{co}}\bigl\{\varphi_{n}(u,v)e_{n}; n \in \N\bigr\}\text{,}
\end{equation*}
which are by definition nonempty, closed and convex sets. Bessel's inequality shows that the sets are bounded as well. First we verify Hausdorff-continuity of $F\colon H\times \mathcal{H} \to \mathcal{P}(X)$.

Let $\epsilon > 0$ and $N \in \N$ be such that $\sum_{k=N+1}^{\infty}\norm{\varphi_{k}}_{\infty}^{2}<\epsilon$. Let $(u,v) \in H\times \mathcal{H}$. Appealing to continuity of $\varphi_{n}$, there exist open neighbourhoods $U_{n},V_{n}$ of $u$ and $v$ respectively s.t.\
\begin{equation*}
  \forall 1\!\leq \!n \!\leq \!N\,\,\forall \bigl(\overline{u},\overline{v}\bigr) \in U_{n}\times V_{n}\colon \abs{\varphi_{n}(\overline{u},\overline{v}) - \varphi(u,v)} < \epsilon\text{.}
\end{equation*}
Since
\begin{equation*}
  U\coloneq \bigcap_{1\leq n \leq N}U_{n} \qquad\text{and}\qquad V\coloneq \bigcap_{1\leq n \leq N}V_{n}
\end{equation*}
are open neighbourhoods of $u$ and $v$, we proceed to estimate $\Hd\bigl(F(u,v),F(\overline{u},\overline{v})\bigr)$ for all $(\overline{u},\overline{v}) \in U\times V$. To that end, let $x \in F(u,v)$ and write $x = \sum_{k=1}^{\infty}\lambda_{k}\varphi_{k}(u,v)e_{k}$ for some convex combination $\sum_{k=1}^{\infty}\lambda_{k} = 1$. We compute
\begin{align*}
  &d\big(x,F(\overline{u},\overline{v})\big)
  = \inf \biggl\{\norm[\bigg]{x - \sum_{k=1}^{\infty}\mu_{k}\varphi_{k}(\overline{u},\overline{v})e_{k}}; \sum_{k=1}^{\infty}\mu_{k} =1 \,\text{convex combination}\biggr\}\\
  &\leq \inf \biggl\{\Bigl(\sum_{k=1}^{\infty} \underbrace{\abs{\lambda_{k}-\mu_{k}}^{2}}_{\leq \lambda_{k}}
    \abs{\varphi_{k}(u,v) - \varphi_{k}(\overline{u},\overline{v})}^{2}
    \underbrace{\norm{e_{k}}^{2}}_{=1}\Bigr)^{\nicefrac{1}{2}};
    \sum_{k=1}^{\infty}\mu_{k} =1 \,\text{conv. comb.}\biggr\}\\
  &\leq \Bigl(\sum_{k=1}^{N} \lambda_{k}\left.\underbrace{\abs{\varphi_{k}(u,v) - \varphi_{k}(\overline{u},\overline{v})}}_{\leq \epsilon}\right.^{2}\Bigr)^{\nicefrac{1}{2}}
    + \Bigl(\underbrace{\sum_{k=N+1}^{\infty}\abs{\varphi_{k}(u,v) - \varphi_{k}(\overline{u},\overline{v})}^{2}}_{\leq \epsilon^{2}}\Bigr)^{\nicefrac{1}{2}}\\[-1ex]
  &\leq \epsilon + \epsilon\text{,}
\end{align*}
where $N$ is an index chosen suitably large enough. A similar calculation shows for $\overline{x} \in F(\overline{u},\overline{v})$,
\begin{equation*}
  d(\overline{x},F(u,v)) \leq 2\epsilon\text{.}
\end{equation*}
Consequently, 
\begin{align*}
  \Hd\bigl(F(u,v),F(\overline{u},\overline{v})\bigr)
  &= \max \biggl\{\sup_{x \in F(u,v)}d\bigl(x,F(\overline{u},\overline{v})\bigr), \sup_{\overline{x} \in F(u,v)}d\bigl(\overline{x},F(u,v)\bigr)\biggr\}\leq  2\epsilon
\end{align*}
Hence, $F$ is Hausdorff-continuous. In order to show that $F$ is  half weakly continuous, choose correspondingly different neighbourhoods $U_n$ for fixed second arguments in the above and proceed similarly.

We now comment on our assumptions for global solutions:\\
For global solutions, we demand a linear growth bound (see~\cref{def:LinearGrowth}) in \cref{th:GlobalExist}. If we demand that $F\colon H \times \mathcal{H}\to \mathcal{P}(X)$ be of the form as above and
\begin{equation*}
\forall (u,v)\in H\times \mathcal{H}\colon  \varphi_{k}(u,v) = c_{k}\dualprod{u}{e_{k}} + \nu_{k} (v)\norm{v}\text{,}
\end{equation*}
where $\nu_{k} \colon H\to \mathbb{R}$ is continuous, bounded and satisfies $\bigl(\norm{\nu_{k}}_{\infty}\bigr)_{k\in \N} \in \ell^{2}(\N)$ and $c_{k}\in \mathbb{R}$ is a constant with $(c_{k})_{k\in\N}\in \ell^{2}(\N)$, then $F$ satisfies a linear growth bound. We note in passing that this class of examples also satisfies the half weak continuity requirement.

For $x \in F(u,v)$ we can estimate
\begin{align*}
  \norm{x}^{2} &= \sum_{k=1}^{\infty}\lambda_{k}^{2}\varphi_{k}(u,v)^{2}\underbrace{\norm{e_{k}}^{2}}_{=1}\\
               &= \sum_{k=1}^{\infty}\lambda_{k}^{2}\bigl(c_{k}\dualprod{u}{e_{k}} + \nu_{k} (v)\norm{v}\bigr)^{2}\\
               &\leq 2 \sum_{k=1}^{\infty}\lambda_{k}^{2}c_{k}^{2}\dualprod{u}{e_{k}}^{2} + \nu_{k} (v)^{2}\norm{v}^{2}\\
               &\leq 2\sum_{k=1}^{\infty}c_{k}^{2} \norm{u}^{2} + \sum_{k=1}^{\infty}\nu_{k}(v)^{2}\norm{v}^{2}\\
               &= 2 \norm[\big]{(c_{k})_{k}}_{2}^{2} \norm{u}^{2} + 2\norm[\big]{(\norm{\nu_{k}}_{\infty})_{k}}_{2}^{2}\norm{v}^{2}\text{,}
\end{align*}
which proves the linear growth bound.

%%%%%%%%%%%%%%%%%%%%%%%%%%%%%%
\subsection{A time-dependent maximal monotone operator for the Schr\"odinger--Debye system}
\label{subsec:SchroedingerDebye}
We first briefly recapitulate Lebesgue--Sobolev spaces with variable exponents.

Consider $\Omega \subseteq \mathbb{R}^{d}$, $d \geq 1$, a bounded smooth domain. Let $p\colon \Omega \to [1, \infty)$ be measurable with $\mathrm{ess}\sup_{x\in \Omega}p(x) < \infty$, i.e., a {\em bounded variable exponent}. The corresponding Lebesgue space, $\Leb^{p(x)}(
\Omega)$, is defined as the class of those Lebesgue-measurable functions $f\colon \Omega \to \R^{d}$ satisfying
\begin{equation*}
  \int_{\Omega} \abs{f(x)}^{p(x)}\dx[x] < \infty\text{,}
\end{equation*}
the corresponding norms are given by
\begin{equation*}
  \norm{f}_{p(x)} \coloneq \inf \Bigl\{\lambda > 0; \medint\int_{\Omega}\abs[\big]{\tfrac{f(x)}{\lambda}}^{p(x)}\dx[x]\leq 1\Bigr\}\text{.}
\end{equation*}
For details on and properties of $\Leb^{p(x)}(\Omega)$, we refer to \cite[ch.~3]{Ruzicka2011}. The Sobolev spaces $\mathrm{W}^{k,p(x)}(\Omega)$, $k\in \N$, consist of those functions $f \in \Leb^{p(x)}(\Omega)$ s.t.\ all distributional derivatives of order $\leq k$ are again in $\Leb^{p(x)}(\Omega)$, equipped with the norm
\begin{equation*}
  \norm{f}_{k,p(x)} \coloneq \inf \Bigl\{\lambda > 0; \forall \alpha \in \N_{0}^{d}, \abs{\alpha}\leq k\colon \medint\int_{\Omega}\abs[\big]{\tfrac{\partial^{\alpha}f(x)}{\lambda}}^{p(x)}\dx[x]\leq 1\Bigr\}\text{.}
\end{equation*}
For details and properties of $\mathrm{W}^{k,p(x)}(\Omega)$ we refer to \cite[ch.~8]{Ruzicka2011}.

To define our example, let $\mathcal{H} = \Leb^{2}(\Omega)$ and let $p \in \mathcal{C}(\overline{\Omega}; \mathbb{R})$ satisfy
\begin{equation}
  \label{eq:p}
  p^+ \coloneq \max_{(x)\in \overline{\Omega}}  p(x) \geq \min_{(x)\in \overline{\Omega}}  p(x) \eqcolon p^{-} > 2.
\end{equation}
Then
\begin{equation*}
  \mathrm{W}^{1,p(\cdot)}(\Omega) \subseteq \mathcal{H} \subseteq \bigr(\mathrm{W}^{1,p(\cdot)}(\Omega)\bigl)'
\end{equation*}
with continuous and dense embeddings:
\begin{itemize}[leftmargin = 5ex]
  \item Appealing to \cite[lem.~8.1.8]{Ruzicka2011}, since $\Omega$ is assumed to be bounded, $\mathrm{W}^{1,p(x)}(\Omega)\hookrightarrow \mathrm{W}^{1,p^{-}}(\Omega)$ and by \cite[thm.~9.1.7]{Ruzicka2011}, since $\Omega$ is bounded and smooth, $\mathcal{C}^{\infty}(\overline{\Omega})$ is dense in $\mathrm{W}^{1,p(\argdot)}(\Omega)$. By the same argument, $\mathcal{C}^{\infty}(\overline{\Omega})$ is dense in $\mathrm{W}^{1,p^{-}}(\Omega)$.
  \item The continuity and density of the embedding $\mathrm{W}^{1,p^{-}}(\Omega)\hookrightarrow \mathrm{W}^{1,2}(\Omega)$ is well-known.
  \item Lastly, the compact, dense embedding $\mathrm{W}^{1,2}(\Omega)\hookrightarrow \Leb^{2}(\Omega)$ finishes the first inclusion.
  \item The second inclusion is proven by similar arguments using duality.
\end{itemize}
We will make use of coefficients $D(x,t)$ satisfying the following:
\begin{assumptions}
  \label{ass:D}
  Let $p \in \mathcal{C}(\overline{\Omega}; \mathbb{R})$ satisfy \eqref{eq:p}. Let $D \in \Leb^{\infty}\bigl([0,T]\times\Omega; \mathbb{R}\bigr)$ satisfy
  \begin{enumerate}[leftmargin=6ex, label = (D\arabic*)]
    \item $\exists\beta \!> \!0$ s.t.\ $0 < \beta \leq D(t,x)$ for almost all $(t,x) \in [0,T]\times\Omega$ and
    \item $D(t,x) \geq D(s,x)$ for each $x \in \Omega$ and $0 \leq t \leq s \leq T$.
  \end{enumerate}
\end{assumptions}
We consider the following operator
\begin{align*}
  A_{1}(t) \colon \mathrm{W}^{1,p(\cdot)}(\Omega) &\to \bigl(\mathrm{W}^{1,p(\cdot)}(\Omega)\bigr)',\\
  \dualprod{A_{1}(t)v}{w} &\coloneq \medint\int_{\Omega}D(t,x)\abs{\nabla v(x)}^{p(x)-2}\nabla  v(x)\nabla w(x)\dx[x]\\
  &\quad +  \medint\int_{\Omega}\abs{v(x)}^{p(x)-2}v(x)w(x)\dx[x].
\end{align*}
This modified $p$-Laplacian is well-defined and enjoys the following properties:

\begin{lemma}[{\cite[lem.~2.4 \& rem.~2.5]{Simsen2023}}]
  The operator $A_1(t) \colon \mathrm{W}^{1,p(\cdot)}(\Omega) \rightarrow \bigl(\mathrm{W}^{1,p(\cdot)}(\Omega)\bigr)'$ is monotone, coercive and hemicontinuous, for each $t\in (0,T]$.
\end{lemma}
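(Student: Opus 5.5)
We have to show that for each fixed $t\in(0,T]$ the operator $A_1(t)$ is monotone, coercive and hemicontinuous on $V\coloneqq \mathrm{W}^{1,p(\cdot)}(\Omega)$. We first check well-definedness, and then verify the three properties separately by pointwise arguments inside the integrals. These use only the bounds $\beta\le D(t,x)\le\norm{D}_\infty$ from \cref{ass:D} and $2<p^-\le p(x)\le p^+<\infty$.

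\emph{Well-definedness.} For $v\in V$ we have $\abs{\nabla v}^{p(x)-1}\in \Leb^{p'(x)}(\Omega)$ with $p'=\tfrac{p}{p-1}$, because $\int_\Omega \abs{\nabla v}^{(p-1)p'}\dx[x]=\int_\Omega\abs{\nabla v}^{p}\dx[x]<\infty$. The same holds for $\abs{v}^{p(x)-1}$. Hölder's inequality in variable exponent spaces \cite[ch.~3]{Ruzicka2011}, together with $D\in\Leb^\infty$, shows that $w\mapsto\dualprod{A_1(t)v}{w}$ is linear and bounded on $V$.

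\emph{Monotonicity.} The key fact is that for each fixed $x$ and $q=p(x)\ge 2$, the map $\xi\mapsto\abs{\xi}^{q-2}\xi$ on $\R^d$ is the gradient of the convex function $\tfrac1q\abs{\xi}^q$. Hence
\[
\bigl(\abs{\xi}^{q-2}\xi-\abs{\eta}^{q-2}\eta\bigr)\cdot(\xi-\eta)\ge 0 .
\]
We apply this with $\xi=\nabla v(x)$, $\eta=\nabla w(x)$, multiply by $D(t,x)\ge\beta>0$, and apply it once more to the scalar term with $\xi=v(x)$, $\eta=w(x)$. Integrating gives $\dualprod{A_1(t)v-A_1(t)w}{v-w}\ge0$.

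\emph{Hemicontinuity.} Fix $u,v,w\in V$. We need continuity of $s\mapsto\dualprod{A_1(t)(u+sv)}{w}$ on $[0,1]$. The integrand depends continuously on $s$ pointwise. It is dominated by
\[
\norm{D}_\infty\bigl(\abs{\nabla u}+\abs{\nabla v}\bigr)^{p(x)-1}\abs{\nabla w}+\bigl(\abs{u}+\abs{v}\bigr)^{p(x)-1}\abs{w},
\]
which is integrable by the Hölder argument above. Dominated convergence then gives the claim.

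\emph{Coercivity.} This is the step I expect to be the main obstacle, because variable exponents make modular and norm estimates non-homogeneous. Testing with $w=v$ gives
\[
\dualprod{A_1(t)v}{v}\ge\min\{\beta,1\}\,\rho(v),\qquad \rho(v)\coloneqq\int_\Omega\bigl(\abs{\nabla v}^{p(x)}+\abs{v}^{p(x)}\bigr)\dx[x].
\]
Next we use the standard modular–norm relation from \cite[ch.~3 and ch.~8]{Ruzicka2011}. For $\norm{v}_{1,p(x)}\ge1$ we have $\rho(v)\ge c\,\norm{v}_{1,p(x)}^{p^-}$ for some constant $c>0$. The constant may depend on how the Sobolev norm is defined (infimum over each derivative modular versus their sum), but it is positive because finitely many modulars are involved. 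Therefore
\[
\frac{\dualprod{A_1(t)v}{v}}{\norm{v}_{1,p(x)}}\ge c'\norm{v}_{1,p(x)}^{p^--1}\to\infty
\]
as $\norm{v}_{1,p(x)}\to\infty$, since $p^->2>1$. This proves coercivity.
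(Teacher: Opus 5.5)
Your argument is correct. The paper gives no proof of this lemma and imports it from \cite[lem.~2.4 \& rem.~2.5]{Simsen2023}. Your direct verification is exactly the standard argument behind such a statement: pointwise monotonicity of $\xi\mapsto\abs{\xi}^{p(x)-2}\xi$, dominated convergence for hemicontinuity, and the modular--norm inequality for coercivity.

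Two small points are worth making explicit.

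\textbf{The constant in the coercivity step.} With the norm on $\mathrm{W}^{1,p(\cdot)}(\Omega)$ as defined in the paper, you have $\norm{v}_{1,p(x)}=\max_{\abs{\alpha}\leq 1}\norm{\partial^\alpha v}_{p(x)}$, so you may take $c=1$. Indeed, if $\norm{v}_{1,p(x)}=\norm{\partial^\alpha v}_{p(x)}\geq 1$, then $\rho(v)\geq\int_\Omega\abs{\partial^\alpha v}^{p(x)}\dx[x]\geq\norm{v}_{1,p(x)}^{p^-}$, because $\abs{\nabla v}\geq\abs{\partial_i v}$ pointwise.

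\textbf{The bounds on $D(t,\argdot)$ for fixed $t$.} You use that for every fixed $t$ the section $D(t,\argdot)$ is measurable with $\beta\leq D(t,\argdot)\leq\norm{D}_\infty$ a.e.\ on $\Omega$. As \cref{ass:D} is literally formulated, (D1) and the $\Leb^\infty$-bound hold only a.e.\ in $(t,x)$. Condition (D2) together with Fubini transfers these bounds to every $t\in(0,T)$: compare $D(t,x)$ with $D(s,x)$ for suitable $s\in[t,T]$ for the lower bound, resp.\ $s\in[0,t]$ for the upper bound. However, this yields no lower bound at $t=T$. The pointwise-in-$t$ reading is clearly the intended one, but it should be stated as part of the hypotheses rather than derived from them.
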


To see that $A_{1}$ admits a realization $A$ of subdifferential type in $\mathcal{H} = \Leb^{2}(\Omega)$, we appeal to the following result:

\begin{theorem}[{\cite[thm.~2.8]{Simsen2023}}]
  \label{th:PropertiesOfPotential}
  The realization $A(t)$ of $A_1(t)$ in $\mathcal{H}$ is the subdifferential $\partial\phi^{D(t,\cdot)}_{p(\cdot)}$ of the following convex, lower semicontinuous map with non-trivial effective domain:
  \begin{align*}
    \phi^t &\coloneq \phi^{D(t,\cdot)}_{p(\cdot)} \colon \Leb^2(\Omega) \rightarrow \mathbb{R}\cup\{+\infty\}\\
    v &\mapsto \begin{cases}
      \medint\int_{\Omega}\tfrac{D(t,x)}{p(x)}\abs{\nabla v}^{p(x)}\dx[x] +\medint\int_{\Omega}\tfrac{1}{p(x)}\abs{v}^{p(x)}\dx[x] &\textrm{if}\ v\in \mathrm{W}^{1,p(\cdot)}(\Omega)\\
      +\infty &\textrm{otherwise.}
    \end{cases}
  \end{align*}
\end{theorem}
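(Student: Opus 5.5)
The plan is to treat $\phi^t$ as an integral functional of variable-exponent type with a time-dependent, monotonically nonincreasing coefficient $D(t,\cdot)$. I would verify the three structural claims in turn: convexity and lower semicontinuity on $\Leb^2(\Omega)$, nonempty (indeed dense) effective domain, and the identification $\partial\phi^t = A(t)$ with $A(t)$ the $\Leb^2$-realization of $A_1(t)$, i.e.\ $A(t)v = A_1(t)v$ whenever $A_1(t)v \in \Leb^2(\Omega)$.

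Convexity is pointwise. For a.e.\ $x$ the maps $\xi\mapsto \tfrac{D(t,x)}{p(x)}|\xi|^{p(x)}$ and $s\mapsto \tfrac1{p(x)}|s|^{p(x)}$ are convex because $p(x)>2$ and $D\ge\beta>0$ by (D1). Convexity of $\mathrm{W}^{1,p(\cdot)}(\Omega)$ as a set then gives convexity of $\phi^t$ on all of $\Leb^2$.

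For lower semicontinuity, let $v_n\to v$ in $\Leb^2$ with $\liminf\phi^t(v_n)<\infty$ (otherwise there is nothing to prove), and pass to a subsequence realizing the liminf. The bound $\phi^t(v_n)\le C$ together with $D\ge\beta$ bounds the modulars of $v_n$ and $\nabla v_n$, hence bounds $v_n$ in $\mathrm{W}^{1,p(\cdot)}(\Omega)$. This space is reflexive since $1<p^-\le p^+<\infty$ (see \cite{Ruzicka2011}), so a further subsequence converges weakly. Its weak limit must coincide with the $\Leb^2$ limit $v$, which puts $v\in \mathrm{W}^{1,p(\cdot)}$. Since the functional is convex and strongly continuous on $\mathrm{W}^{1,p(\cdot)}$ (by continuity of the modular), it is weakly l.s.c. there, and $\phi^t(v)\le\liminf\phi^t(v_n)$ follows. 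The effective domain equals $\mathrm{W}^{1,p(\cdot)}(\Omega)\supseteq \mathcal{C}^\infty(\overline\Omega)$, which is dense in $\Leb^2$ by the embedding chain above. It is also independent of $t$, as required in \cref{ass:A}.

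For the subdifferential, I would first show that $\phi^t$ restricted to $V\coloneqq \mathrm{W}^{1,p(\cdot)}(\Omega)$ is Gâteaux differentiable with derivative $A_1(t)v$. This follows by differentiating under the integral, with dominated convergence justified by the mean value theorem and Hölder's inequality in variable-exponent spaces. For convex Gâteaux-differentiable functionals the subdifferential on $V$ is the singleton $\{A_1(t)v\}$, so $w\in\partial\phi^t(v)\subseteq \Leb^2$ holds precisely when $\langle w,z-v\rangle\le\phi^t(z)-\phi^t(v)$ for all $z\in V$. Restricting to $z=v+sh$ with $h\in V$ and letting $s\to0^\pm$ gives $\langle w,h\rangle=\langle A_1(t)v,h\rangle$ for all $h\in V$. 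By density of $V$, this means exactly $A_1(t)v\in\Leb^2$ and $w=A_1(t)v$. Conversely, the gradient inequality for the convex functional shows that $A_1(t)v\in\Leb^2$ implies $A_1(t)v\in\partial\phi^t(v)$. Hence $\partial\phi^t=A(t)$, which is also single-valued with $\partial\phi^t(0)=\{0\}$.

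The main obstacle I expect is the differentiability and domination step in variable-exponent spaces. There one needs $|\nabla v|^{p(x)-1}\in \Leb^{p'(x)}$, uniformly for perturbations $v+sh$ with $|s|\le1$. I would handle it with the pointwise bound $|a+sb|^{p(x)-1}|b|\le 2^{p^+}(|a|^{p(x)}+|b|^{p(x)})$, which keeps the dominating function integrable. Monotonicity (D2) is not needed here; it only enters later, when verifying the time-regularity part of \cref{ass:A}.
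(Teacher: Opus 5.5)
Your proof is correct. It reaches the identification $\partial\phi^t=A(t)$ by a more direct route than the one the paper relies on.

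The paper does not prove this statement; it imports it from \cite[thm.~2.8]{Simsen2023}. Just before the statement it quotes that $A_1(t)$ is monotone, coercive and hemicontinuous, which points to the standard argument behind the citation:
\begin{enumerate}[label=(\arabic*), leftmargin=5ex]
\item Show that $\phi^t$ is proper, convex and l.s.c.
\item Get the easy inclusion $A(t)\subseteq\partial\phi^t$ from the convexity inequality $\phi^t(z)-\phi^t(v)\geq\langle A_1(t)v,z-v\rangle$.
\item Get equality by maximality. Since $A_1(t)$ is monotone, hemicontinuous and coercive, $I+A_1(t)\colon \mathrm{W}^{1,p(\cdot)}(\Omega)\to\mathrm{W}^{1,p(\cdot)}(\Omega)'$ is surjective by Minty--Browder (with $I$ the embedding through $\Leb^2$). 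So the $\Leb^2$-realization $A(t)$ is maximal monotone, and a maximal monotone operator contained in the monotone relation $\partial\phi^t$ must coincide with it.
\end{enumerate}

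You replace step (3) with a direct proof of $\partial\phi^t\subseteq A(t)$. You test the subgradient inequality with $z=v\pm sh$ and pass to the two-sided directional derivative, which gives $\langle w,h\rangle=\langle A_1(t)v,h\rangle$ for all $h$ in the dense subspace $\mathrm{W}^{1,p(\cdot)}(\Omega)$. That is exactly the statement that $v\in\operatorname{dom}(A(t))$ and $A(t)v=w$.

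What your route buys:
\begin{itemize}[leftmargin=5ex]
\item It is more elementary and self-contained. It uses only dominated convergence, for which your bound $(|a|+|b|)^{p(x)}\le 2^{p^+}(|a|^{p(x)}+|b|^{p(x)})$ suffices.
\item It needs neither coercivity nor a surjectivity theorem. Maximal monotonicity of $A(t)$ then comes out as a consequence of Rockafellar's theorem rather than going in as an input.
\item Strictly only the directional derivatives are needed, together with the monotonicity of difference quotients for the converse inclusion. Full Gâteaux differentiability on $\mathrm{W}^{1,p(\cdot)}(\Omega)$ is more than the argument uses.
\end{itemize}

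What the cited route buys: it avoids the differentiation-under-the-integral step entirely and reuses the structural lemma on $A_1(t)$ that the paper already quotes.

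Both routes need the same l.s.c.\ argument, and yours handles it correctly. Bounded modulars give bounded norms, reflexivity of $\mathrm{W}^{1,p(\cdot)}(\Omega)$ gives a weakly convergent subsequence, the embedding into $\Leb^2$ identifies the limit, and convexity plus strong continuity give weak lower semicontinuity.
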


\

Consequently, the realization $A(t)$ of $A_1(t)$ in $\mathcal{H}=L^{2}(\Omega)$,
\begin{align*}
  A(t) \colon L^{2}(\Omega) \supseteq \operatorname{dom}\bigl(A(t)\bigr)&\to \Leb^{2}(\Omega)\\
  v&\mapsto -\operatorname{div}\bigl(D(t,x)\abs{\nabla v}^{p(x)-2}\nabla v\bigr) + \abs{v}^{p(x)-2}v\text{,}
\end{align*}
is a maximal monotone time-dependent operator densely defined on $\Leb^{2}(\Omega)$ with       
\begin{equation*}
  \operatorname{dom}\bigl(A(t)\bigr) \coloneq \bigl\{v \in \mathrm{W}^{1,p(x)}(\Omega)\colon A(t)v \in \Leb^{2}(\Omega)\bigr\}\text{.}
\end{equation*}
To verify that under \cref{ass:D}, the operators $A(t)$ satisfy \cref{ass:A}, which is the critical criterion for well-posedness of the evolution problem
\begin{equation}
  \label{eq:MonotoneEvProblem}
  \left\{
  \begin{aligned}
    \tfrac{\dd}{\dx[t]}v(t) + \partial \phi^{D(t,\cdot)}_{p(\cdot)} v(t) &=f(t) \qquad t>0\text{,}\\
    v(0)&=v_{0}\text{,}
  \end{aligned}
  \right.
\end{equation}
for some $v_{0}\in \mathcal{H}$, we make the following observations, following \cite[p.~2548f]{SimsenKloeden2014}.
\begin{remark}[$A(\argdot)$ satisfies \cref{ass:A}]
  \label{th:AssumptionsSatisfied}
  We comment on the items of \cref{ass:A} one by one:
  \begin{enumerate}[leftmargin = 5ex, label = (\roman*)]
    \item is satisfied with the nullset $Z$ being the empty set, appealing to \cref{th:PropertiesOfPotential}, where we additionally observe
          \begin{itemize}[leftmargin = 3ex]
            \item $\operatorname{dom}(\phi^{t}) \equiv \mathrm{W}^{1,p(x)}(\Omega)$ (dense in $\mathcal{H} = \Leb^{2}(\Omega)$ as argued above).
            \item $\partial\phi^{t}$ is single-valued, because the norm on $\mathcal{H}=\Leb^{2}(\Omega)$ is strictly convex.
            \item $\partial \phi^{t}(0) = 0$ for all $t>0$ (easily verified).
          \end{itemize}
    \item we verify by defining for $n \in \N_{\geq 1}$
          \begin{itemize}[leftmargin = 3ex]
            \item $K_{n}\coloneq n \,\,(>0)$,
            \item $g_{n}\colon [0,T]\to \mathbb{R},\, t\mapsto t+n$ (absolutely continuous with $g_{n}'\in \Leb^{\infty}(0,T)$),
            \item $h_{n}\colon [0,T]\to \mathbb{R},\, t\mapsto n$ (of bounded variation).
          \end{itemize}
          Then for $t \in [0,T]$, $w \in \operatorname{dom}(\phi^{t})$ with $\norm{w}\leq n$ and $s \in [t,T]$ let $\widetilde{w} \coloneq w \in X = \operatorname{dom}(\phi^{s})$. In the proof of \cite[thm.~3.5]{Simsen2023} for the choice $\alpha \coloneq \nicefrac{1}{2}$ and $\beta = 2$ it is verified that
          \begin{equation*}
            \phi^{s}(\widetilde{w}) = \phi^{s}(w) \leq \phi^{t}(w)\text{,}
          \end{equation*}
          where one uses item $(ii)$ from \cref{ass:D}.
  \end{enumerate}
  This makes sure \cref{ass:A} are satisfied for our particular choice of $A(\argdot)$.
\end{remark}

We can now couple the evolution operator $A(\cdot)$ with any of the semigroup generators discussed in the following subsections to obtain a starting problem \eqref{eq:IVP} of choice.

%%%%%%%%%%%%%%%%%%%%%%%%%%%%%%
%    S U B S E C T I O N
\subsection{Heat semigroup}
\label{subsec:Heat}
In our first example for possible semigroup generators, we plug the heat equation into the first equation in \eqref{eq:IVP}, i.e., we arrive at
\begin{equation}
  \label{eq:Heat}
  \left\{
  \begin{aligned}
    \tfrac{\dd}{\dx[t]}u - \Delta u &\in F(u,v) &&\text{on}\ (0, T)\times \Leb^{2}(\Omega), \\
    \tfrac{\dd}{\dx[t]}v + A(t)v &\in G(u,v) &&\text{on}\ (0, T) \times \mathcal{H}, \\
    (u(0), v(0)) &= (u_0, v_0) &&\text{in}\ \Leb^{2}(\Omega; \mathbb{R}) \times \mathcal{H},
  \end{aligned}
  \right.
\end{equation}
where the second line uses any operator $A$ satisfying the assumptions of \cref{th:LocalExist} and $E = -\Delta$ is the Dirichlet--Laplacian on $H = \Leb^{2}(\Omega)$ for a bounded Lipschitz domain $\Omega\subseteq \mathbb{R}^{d}$. Then $\Delta$ is self-adjoint and non-positive and consequently $\Delta$ is maximal dissipative and generates a $\mathrm{C}_{0}$-semigroup of contractions appealing to the Lumer--Philips theorem. Furthermore, the semigroup has compact resolvent.
\begin{corollary}
  If $F$ and $G$ are bcc, Hausdorff-continuous and half weakly continuous, problem \eqref{eq:Heat} has a unique local solution $(u,v) \in \mathcal{C}\bigl([0,T],\Leb^{2}(\Omega)\bigr)\times \mathcal{C}\bigl([0,T];\mathcal{H}\bigr)$.\\
  If additionally, $F, G$ satisfy a linear growth bound, the solution is global.
\end{corollary}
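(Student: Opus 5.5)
The plan is to deduce the corollary directly from \cref{th:LocalExist}, \cref{th:MaxExist} and \cref{th:GlobalExist} with $H=\Leb^{2}(\Omega)$ and $E=-\Delta$ the Dirichlet--Laplacian. The first step is to check the hypotheses on the semigroup side. Since $\Delta$ is self-adjoint and non-positive, Lumer--Phillips shows that $-E=\Delta$ generates a contraction $\mathrm{C}_{0}$-semigroup. Compactness of the resolvent follows because $\dom(\Delta)\subseteq \Sob^{1}_{0}(\Omega)$ with the graph norm controlling the $\Sob^{1}$-norm, and $\Sob^{1}_{0}(\Omega)\hookrightarrow \Leb^{2}(\Omega)$ is compact by Rellich's theorem for bounded $\Omega$. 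Here the Lipschitz regularity of $\Omega$ is not even needed, since we use $\Sob^{1}_{0}$. On the monotone side, $A$ is assumed to satisfy the hypotheses of \cref{th:LocalExist}, for instance the operator of \cref{subsec:SchroedingerDebye} by \cref{th:AssumptionsSatisfied}. As a by-product, the contraction property gives $\widetilde{C}=1$ in \eqref{eq:m}.

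The second step concerns the coupling terms. The corollary lists bcc, Hausdorff-continuity and half weak continuity, but \cref{th:LocalExist} additionally needs $F,G$ to map bounded sets to bounded sets. In infinite dimensions this does not follow from Hausdorff-continuity alone, so I will make it explicit as a standing hypothesis inherited from \cref{th:LocalExist}. In the global part it is automatic, since a linear growth bound trivially implies boundedness on bounded sets. With this, \cref{th:LocalExist} yields a strong solution on some $[0,T_{0}]$, and \cref{th:MaxExist} extends it to a maximal interval $[0,T_{\max})$.

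The step I expect to be the main obstacle is uniqueness, as claimed in the statement. The construction in \cref{th:LocalExist} produces a fixed point $(f,g)\in\Phi(f,g)$, and for fixed data the solution $S(f,g)$ is unique by \cref{th:SemiGroupSolution} and \cref{th:MonotoneSolution}. I would therefore first establish uniqueness in exactly this sense: the solution attached to a selection pair $(f,g)$ with $(f,g)\in \Sel F(u,v)\times\Sel G(u,v)$ is uniquely determined. This is the same sense in which the proof of \cref{th:GlobalExist} speaks of \enquote{unique solvability}. Uniqueness of $(u,v)$ among all solutions of the inclusion would require an energy estimate for the difference of two solutions $(u_{1},v_{1})$ and $(u_{2},v_{2})$ with selections $(f_{i},g_{i})$. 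For that, I would test the differences with $u_{1}-u_{2}$ and $v_{1}-v_{2}$, using dissipativity of $\Delta$ and monotonicity of $A(t)$, to get
\[
\tfrac{1}{2}\tfrac{\dd}{\dx[t]}\bigl(\norm{u_{1}-u_{2}}^{2}+\norm{v_{1}-v_{2}}^{2}\bigr)\leq \dualprod{f_{1}-f_{2}}{u_{1}-u_{2}}+\dualprod{g_{1}-g_{2}}{v_{1}-v_{2}}.
\]
The right-hand side can only be controlled by Gr\"onwall's lemma if $F,G$ satisfy a one-sided Lipschitz condition, or if they are single-valued and Lipschitz. Hausdorff-continuity alone does not give this. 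So I would present uniqueness in the selection sense above and flag that full uniqueness needs such an extra condition.

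The final step is globality. Under the linear growth bound, \cref{th:GlobalExist} applies verbatim, and its Gr\"onwall estimate with $\widetilde{C}=1$ excludes the blow-up \eqref{eq:Blowup}. Hence $T_{\max}$ reaches $T$ and the solution lies in $\mathcal{C}\bigl([0,T];\Leb^{2}(\Omega)\bigr)\times\mathcal{C}\bigl([0,T];\mathcal{H}\bigr)$.
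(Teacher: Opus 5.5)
Your route is the paper's. The paper's proof consists of the remarks preceding the corollary (the Dirichlet--Laplacian is self-adjoint and non-positive, hence generates a contraction $\mathrm{C}_0$-semigroup with compact resolvent), followed by an application of \cref{th:LocalExist} and \cref{th:GlobalExist}. Your verification of these facts via $\norm{\nabla u}^2=-\langle\Delta u,u\rangle$ and Rellich on $\mathring{\Sob}^1(\Omega)$ is correct and more detailed than the paper's. Both of your caveats are justified. First, boundedness on bounded sets is a hypothesis of \cref{th:LocalExist} that the corollary omits, and it is not implied in general: half weak continuity together with weak compactness of balls only bounds $F(\cdot,v)$ for each fixed $v$; take e.g.\ $F(u,v)=\{\theta(v)e\}$ with $\theta$ continuous but unbounded on the unit ball of $\mathcal{H}$. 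Second, the paper's proof yields no uniqueness at all, since \cref{th:LocalExist} only asserts the existence of \emph{at least one} strong solution.

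You should, however, be firmer in the conclusion you draw: uniqueness is not merely unproven, it is false under the stated hypotheses, even with the linear growth bound. Let $e_1$ be the normalised first Dirichlet eigenfunction, $-\Delta e_1=\lambda_1 e_1$, set $\psi(s)\coloneqq\min\{\abs{s}^{1/2},\lambda_1^{-1}\}$, and take
\[
F(u,v)\coloneqq\{\psi(\langle u,e_1\rangle)\,e_1\},\qquad G(u,v)\coloneqq\{0\},\qquad u_0=0,\quad v_0=0.
\]
These maps are bcc, Hausdorff-continuous, weakly continuous in $u$, bounded and of linear growth. Then $(0,0)$ is a solution (recall $A(t)0=0$). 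So is $(a(\cdot)e_1,0)$ with $a(t)=\lambda_1^{-2}(1-\e^{-\lambda_1 t/2})^2$: here $\sqrt{a}<\lambda_1^{-1}$ and $a'+\lambda_1 a=\sqrt{a}$, $a(0)=0$, which is exactly Duhamel's formula for the $e_1$-component.

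Hence the word ``unique'' must be deleted, not reinterpreted. Your ``selection-sense uniqueness'' is just the uniqueness in \cref{th:SemiGroupSolution} and \cref{th:MonotoneSolution} for fixed data, and it says nothing about the solution set of the inclusion. Note also that the one-sided Lipschitz condition you would need, for \emph{all} $f_i\in F(u_i,v_i)$ and $g_i\in G(u_i,v_i)$, forces $F$ and $G$ to be single-valued once they are Hausdorff-continuous. To see this, apply it to $u_1=u+h(f-f')$, $u_2=u$, $v_1=v_2=v$, with $f_1\in F(u_1,v)$ close to $f\in F(u,v)$ and $f_2=f'\in F(u,v)$; dividing by $h$ and letting $h\downarrow 0$ gives $\norm{f-f'}^2\leq 0$.

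Finally, having rejected uniqueness, you cannot take at face value the ``unique solvability'' with which the proof of \cref{th:GlobalExist} passes to a maximal solution. Argue instead as follows. The existence time in \cref{th:LocalExist} is uniform for data in a bounded set, and by its proof it is independent of the starting time. The Gr\"onwall bound therefore makes it uniform along $[0,T]$, so you can concatenate finitely many local solutions, restarting at $t_0$ with the shifted potentials $\phi^{t_0+\cdot}$, which still satisfy \cref{ass:A}.
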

\begin{proof}
  Apply \cref{th:LocalExist} and \cref{th:GlobalExist}.
\end{proof}
%%%%%%%%%%%%%%%%%%%%%%%%%%%%%%
% S U B S E C T I O N
\subsection{Schr\"odinger semigroup}
\label{subsec:Schroedinger}
The authors of \cite{Simsen2024} studied an autonomous system of inclusions stemming from a generalization of the Schrödinger--Debye system which models the propagation of electromagnetic waves through a nonresonant medium and describes the interaction between a particle and a polarizable medium. In \cite{Simsen2024}, the operator $A\colon \Leb^{2}(\Omega; \mathbb{R})\rightarrow  \Leb^{2}(\Omega; \mathbb{R})$ was a generic maximal monotone operator. We point out that the case $D(t,x)\equiv 1$ in our choice of $A$ (see~\cref{ass:D}) includes the case considered there.

We consider the following nonautonomous Schrödinger-dissipative coupled system of inclusions
\begin{equation}
  \left\{
  \label{eq:Schroedinger}
  \begin{aligned}
    \tfrac{\dd}{\dx[t]}u - \iu \Delta u &\in F(u,v) &&\text{on}\ (0, T) \times \Leb^{2}(\Omega),\\
    \tfrac{\dd}{\dx[t]}v + A(t)v &\in G(u,v) &&\text{on}\ (0, T) \times \Leb^{2}(\Omega),\\
    (u(0), v(0)) &= (u_0, v_0) &&\text{in}\ \Leb^{2}(\Omega) \times \Leb^{2}(\Omega).
  \end{aligned}
  \right.
\end{equation}

The second line of that system contains exactly the specific operator $A$ from \cref{subsec:SchroedingerDebye} satisfying \cref{ass:D} and consequently \cref{ass:A}. For the first line, we observe that $E = -\iu \Delta$, where $\Delta$ is the Dirichlet--Laplacian, is a skew-selfadjoint operator on $H = \Leb^{2}(\Omega)$, because $\Delta$ is self-adjoint. Consequently, by Stone's theorem, the generated semigroup is unitary, in particular it is a $\mathrm{C}_{0}$-semigroup with compact resolvent, since the resolvent set is nonempty and the embedding of the domain into $\Leb^{2}(\Omega)$ is compact.

\begin{corollary}
  If $F$ and $G$ are bcc, Hausdorff-continuous and half weakly continuous, problem \eqref{eq:Schroedinger} has a unique local solution $(u,v) \in \mathcal{C}\bigl([0,T];\Leb^{2}(\Omega)\bigr)\times \mathcal{C}\bigl([0,T];\Leb^{2}(\Omega)\bigr)$.\\
  If additionally, $F, G$ satisfy a linear growth bound, the solution is global.
\end{corollary}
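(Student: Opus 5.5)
The plan is to reduce the corollary to \cref{th:LocalExist} and \cref{th:GlobalExist} by checking their hypotheses for $H=\mathcal{H}=\Leb^{2}(\Omega)$ (realified), $E=-\iu\Delta$ and $A(t)=\partial\phi^{D(t,\cdot)}_{p(\cdot)}$. The word ``unique'' then has to be handled separately.

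\emph{The semigroup part.} The Dirichlet--Laplacian $\Delta$ is self-adjoint and non-positive, so $\iu\Delta$ is skew-self-adjoint. By Stone's theorem, $-E=\iu\Delta$ generates a unitary $\mathrm{C}_0$-group $(T(t))_{t}$, and in particular $\widetilde{C}=\sup_{s}\norm{T(s)}=1$. For compactness of the resolvent:
\begin{itemize}
  \item $\dom(E)=\dom(\Delta)$ with the graph norm embeds continuously into $\Sob^1_0(\Omega)$;
  \item $\Sob^1_0(\Omega)\hookrightarrow\Leb^2(\Omega)$ is compact by Rellich's theorem, since $\Omega$ is bounded;
  \item the resolvent set of $E$ is nonempty, as it contains $(0,\infty)$ by the Hille--Yosida theorem.
\end{itemize}
Together these give that $(\lambda+E)^{-1}$ is compact. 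Realification does not affect any of these properties, as remarked in the introduction.

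\emph{The monotone part and the right-hand sides.} \Cref{th:AssumptionsSatisfied} already shows that \cref{ass:D} implies \cref{ass:A}. Hence \cref{th:MonotoneSolution} and \cref{th:UniformConvergence} are available. The hypotheses on $F,G$ are assumed; the boundedness hypothesis of \cref{th:LocalExist} is implicit in the corollary, or it follows from the linear growth bound in the global part. \Cref{th:LocalExist} then yields a local strong solution on some $[0,T_0]$. Under the linear growth bound, the Gr\"onwall argument of \cref{th:GlobalExist} applies with $C=1$, because the group is unitary. This excludes the blow-up \eqref{eq:Blowup} from \cref{th:MaxExist}, so the solution exists on all of $[0,T]$.

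\emph{Uniqueness, the main obstacle.} Uniqueness does not follow from the cited theorems for genuinely set-valued $F,G$: the fixed-point \cref{th:FixedPointTheorem} gives only existence.
\begin{itemize}
  \item \emph{What holds in general.} Uniqueness holds in the weaker sense available from the paper. Once a fixed-point selection pair $(f,g)\in\Phi(f,g)$ is fixed, the associated solution $(u,v)=S(f,g)$ is uniquely determined, by \cref{th:SemiGroupSolution} and \cref{th:MonotoneSolution}.
  \item \emph{What I would try for genuine uniqueness.} I would add the assumption that $F,G$ are single-valued and Lipschitz. For two solutions $(u_i,v_i)$, the difference $v_1-v_2$ is tested against itself. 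Monotonicity of $A(t)$ removes the operator term and gives
  \[
  \norm{v_1(t)-v_2(t)}^2\le 2\medint\int_0^t\dualprod{g_1-g_2}{v_1-v_2}\dx[s].
  \]
  For the other component, unitarity of $T$ in Duhamel's formula gives
  \[
  \norm{u_1(t)-u_2(t)}\le\medint\int_0^t\norm{f_1-f_2}\dx[s].
  \]
  The Lipschitz bounds combined with \cref{lem:elem} then give an integral inequality for $\norm{u_1-u_2}+\norm{v_1-v_2}$, and Gr\"onwall's lemma forces this quantity to vanish.
\end{itemize}
Without such an extra assumption, I expect the uniqueness claim can only be read in the selection-wise sense above.
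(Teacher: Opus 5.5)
Your existence argument is the paper's proof. The paper simply invokes \cref{th:LocalExist} together with \cref{th:AssumptionsSatisfied}, and then \cref{th:GlobalExist}. The properties of $E=-\iu\Delta$ (skew-self-adjoint, unitary group by Stone's theorem, compact resolvent) are recorded in the text just before the corollary, and your hypothesis checks spell this out. You are also right that the boundedness hypothesis of \cref{th:LocalExist} is missing from the corollary and is not implied by the other assumptions. For example, $G(u,v)=B_{\mathcal{H}}[0,\rho(v)]$ with $\rho$ continuous but unbounded on the unit ball of $\mathcal{H}$ is bcc, Hausdorff-continuous and half weakly continuous. So boundedness has to be added before \cref{th:LocalExist} can be applied.

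Your diagnosis on uniqueness is correct, and it can be made definitive: the uniqueness claim is false under the stated hypotheses, even with a linear growth bound. Take $F\equiv B_{H}[0,1]$ and $G\equiv\{0\}$. Both are bcc, Hausdorff-continuous, half weakly continuous, bounded and of linear growth. Then $v$ is the unique solution of $v'+A(t)v=0$, $v(0)=v_0$. However, for every $c\in[-1,1]$ the selection $f\equiv c\phi_1$, where $-\Delta\phi_1=\mu_1\phi_1$ and $\norm{\phi_1}=1$, yields the strong solution
\[
  u_c(t)=T(t)u_0+c\,\tfrac{1-\e^{-\iu\mu_1 t}}{\iu\mu_1}\,\phi_1 ,
\]
and these functions differ for $0<t<2\pi/\mu_1$.

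The paper's one-line proof has exactly the gap you point out. \cref{th:LocalExist} rests on the fixed point \cref{th:FixedPointTheorem} and only yields at least one solution. The ``unique solvability'' invoked at the start of the proof of \cref{th:GlobalExist} is therefore not available. This does not hurt global existence, since continuation only needs the uniform existence step on bounded sets of initial data that \cref{th:LocalExist} provides.

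So the correct content of the corollary is existence: local existence, and global existence under linear growth. Your selection-wise reading is a tautology rather than a uniqueness result. Your single-valued Lipschitz argument is a sound way to obtain genuine uniqueness under an extra hypothesis. One detail: derive the energy inequality on $[\delta,t]$ and let $\delta\downarrow 0$, as in the proof of \cref{th:LocalExistSingleValued}, because strong solutions of the monotone part are only absolutely continuous away from $t=0$.
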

\begin{proof}
  Apply \cref{th:LocalExist} in combination with \cref{th:AssumptionsSatisfied}, as well as \cref{th:GlobalExist}.
\end{proof}

%%%%%%%%%%%%%%%%%%%%%%%%%%%%%%
%    S U B S E C T I O N
\subsection{Wave semigroup}
\label{subsec:Wave}
On a bounded Lipschitz domain $\Omega\subseteq \mathbb{R}^{d}$ we can pose  the wave equation, for some $a^*=a\in \Leb^\infty(\Omega)^{d \times d}$ uniformly strictly positive definite,
\begin{equation*}
  \left\{
    \begin{aligned}
      \tfrac{\dd^{2}}{\dx[t^{2}]}u - \operatorname{div} a \operatorname{grad}_{0}  u &\in F &&\quad\text{on}\ (0, T) \times \Leb^{2}(\Omega; \mathbb{R}),\\
      u(0) &= u_0 &&\quad\text{in}\ \Leb^{2}(\Omega; \mathbb{R}),
    \end{aligned}
  \right.
\end{equation*}
as a system as follows
\begin{equation*}
  \biggl[\partial_{t}\begin{pmatrix} 1 & 0 \\ 0 & a^{-1} \end{pmatrix}
  - \begin{pmatrix} 0 & \operatorname{div} \\ \operatorname{grad}_{0} & 0 \end{pmatrix}\biggr]
  \begin{pmatrix} u_{1} \\ u_{2} \end{pmatrix}
  \in F(u_{1},u_{2},v)\text{;}
\end{equation*}see~\cite[p.~92ff]{Waurick2022} for details. Consequently, we arrive at the formulation
\begin{equation}
  \label{eq:Wave}
  \left\{
  \begin{aligned}
    \biggl[\partial_{t}\!\begin{pmatrix} 1 & 0 \\ 0 & a^{-1} \end{pmatrix}
    \!- \!\begin{pmatrix} 0 & \operatorname{div} \\ \operatorname{grad}_{0} & 0 \end{pmatrix}\!\biggr]
    \!\begin{pmatrix} u_{1} \\ u_{2} \end{pmatrix}
    &\in F(u_{1},u_{2},v) &&\text{on}\ (0, T) \!\times \!H,\\
    \tfrac{\dd}{\dx[t]}v + A(t)v &\in G(u_{1},u_{2},v) &&\text{on}\ (0, T) \!\times \!\mathcal{H},\\
    \big(u_{1}(0), u_{2}(0), v(0)\big) &= (u_{1,0}, u_{2,0}, v_{0}) &&\text{in}\ H \!\times \! \mathcal{H},
  \end{aligned}
  \right.
\end{equation}
where again, $A$ satisfies the assumptions of \cref{th:LocalExist} and $H = \Leb^{2}(\Omega)\times \Leb^{2}(\Omega)^d$. For the operator $E \coloneq \begin{psmallmatrix} 0 & \operatorname{div} \\ \operatorname{grad}_{0} & 0 \end{psmallmatrix}$, where $\operatorname{grad}_{0}$ denotes the weak gradient with Dirichlet boundary condition (i.e., the distributional gradient restricted to the domain $\mathring{\Sob}^1(\Omega)$ given by the completion of smooth compactly supported functions w.r.t.~$\Sob^1$-norm) and $\operatorname{div}=-\operatorname{grad}_0^*$, we can state the domain as
\begin{equation*}
  \operatorname{dom}(E) \coloneq \mathring{\Sob}^{1}(\Omega)\times \Sob(\operatorname{div},\Omega) \subseteq H.
\end{equation*}
$E$ generates a $\mathrm{C}_{0}$-semigroup on $H$, because $E$ is skew-selfadjoint, but it lacks compact resolvent, since the classical Helmholtz-decomposition states
\begin{equation*}
  \Leb^{2}(\Omega) = \nabla \mathring{\Sob}^{1}(\Omega) \oplus \ker (\operatorname{div}).
\end{equation*}
In particular, $\operatorname{curl} \Sob(\operatorname{curl},\Omega) \subseteq \ker (\operatorname{div})$. By factorising the kernel of the divergence, i.e., by transitioning to
\begin{equation*}
  \widetilde{H} = \Leb^{2}(\Omega) \times \nicefrac{\Leb^{2}(\Omega)^{d}}{\ker (\operatorname{div})},
\end{equation*}
and the corresponding generator $\widetilde{E}$ on $\widetilde{H}$, we obtain a compact embedding of
\begin{equation*}
  \operatorname{dom}\bigl(\widetilde{E}\bigr) = \mathring{\Sob}^{1} \times \nicefrac{\Sob(\operatorname{div};\Omega)}{\ker (\operatorname{div})}
\end{equation*}
into $\widetilde{H}$, since $\mathring{\Sob}^{1}(\Omega)\overset{\mathrm{c}}{\hookrightarrow} \Leb^{2}(\Omega)$ by Rellich's theorem and for the quotient space we observe that for any bounded sequence $(f_{n})_{n}$ in $\nicefrac{\Sob(\operatorname{div};\Omega)}{\ker (\operatorname{div})}$, we can write $f_{n}= \operatorname{grad}_{0}g_{n}$ by virtue of the Helmholtz-decomposition. Appealing to Poincar\'e's inequality, the seminorm $\norm{\nabla \argdot}_{2}$ and the norm $\norm{\argdot}_{1,2}$ are equivalent. In particular, $(g_{n})_{n}$ in $\mathring{\Sob}^{1}(\Omega)$ is bounded and we can appeal to Rellich's theorem to obtain a converging subsequence (denoted by the same indices) $g_{n}\to g$ in $\Leb^{2}(\Omega)$. We now observe:
\begin{align*}
  \norm{f_{n}-f_{m}}^{2} &= \dualprod{f_{n}-f_{m}}{\operatorname{grad}_{0}(g_{n}-g_{m})}\\
  &= -\dualprod{\operatorname{div}(f_{n}-f_{m})}{g_{n}-g_{m}},\end{align*}
by definition of the adjoint $(\operatorname{div}^*=-\operatorname{grad}_0)$. Since $(g_{n})_{n}$ in $ \Leb^{2}(\Omega)$ converges and $(\operatorname{div}f_{n})_{n}$ in $ \Leb^{2}(\Omega)$ is bounded, $(f_{n})_{n}$ is a $\Leb^{2}(\Omega)$-Cauchy-sequence and admits a limit.
\begin{corollary}
  If $F$ and $G$ are bcc, Hausdorff-continuous and half weakly continuous, problem \eqref{eq:Wave} posed on $\widetilde{H}\times \mathcal{H}$ has a unique local solution
  \begin{equation*}
    (u_{1},u_{2},v) \in \mathcal{C}\bigl([0,T];\Leb^{2}(\Omega)\bigr)\times \mathcal{C}\bigl([0,T];\nicefrac{\Leb^{2}(\Omega)^{d}}{\ker (\operatorname{div})}\bigr)\times \mathcal{C}\bigl([0,T];\mathcal{H}\bigr).
  \end{equation*}
  If additionally, $F, G$ satisfy a linear growth bound, the solution is global.
\end{corollary}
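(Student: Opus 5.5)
The plan is to derive the corollary from \cref{th:LocalExist}, \cref{th:MaxExist} and \cref{th:GlobalExist}, applied with $H$ replaced by $\widetilde{H}$. Since $a$ is a general coefficient, the first task is to make the generator fit the abstract setting. I would equip $\Leb^{2}(\Omega)\times\Leb^{2}(\Omega)^d$ with the weighted scalar product
\[
\langle (x,y),(x',y')\rangle_{a}\coloneqq \langle x,x'\rangle+\langle a^{-1}y,y'\rangle .
\]
This norm is equivalent to the standard one because $a$ is bounded and uniformly strictly positive definite. In this weighted space, the system takes the form $\partial_t U+\widetilde{E}U\in \widetilde{F}$ with $\widetilde{E}\coloneqq -\begin{psmallmatrix}1&0\\0&a\end{psmallmatrix}E$ on $\dom(E)$ and $\widetilde{F}\coloneqq\begin{psmallmatrix}1&0\\0&a\end{psmallmatrix}F$.

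\textbf{Step 1: $\widetilde{E}$ generates a $\mathrm{C}_0$-group.} A direct computation using $\operatorname{div}=-\operatorname{grad}_0^{\ast}$ shows that $\widetilde{E}$ is skew-selfadjoint with respect to $\langle\cdot,\cdot\rangle_a$. Hence, by Stone's theorem, $-\widetilde{E}$ generates a unitary $\mathrm{C}_0$-group.

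\textbf{Step 2: reduction to the quotient and compact resolvent.} The subspace $\{0\}\times\ker(\operatorname{div})$ lies in $\ker\widetilde{E}$. Its $\langle\cdot,\cdot\rangle_a$-orthogonal complement is $\Leb^{2}(\Omega)\times a\operatorname{grad}_0\mathring{\Sob}^1(\Omega)$. This complement is closed and invariant under $\widetilde{E}$, and it is isomorphic to $\widetilde{H}$. I would therefore realise $\widetilde{H}$ as this complement, so that the restricted generator is again skew-selfadjoint.

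For compactness of the resolvent, I would adapt the Cauchy-sequence argument preceding the corollary. A bounded sequence in the domain has second components $f_n=a\operatorname{grad}_0 g_n$ with $\operatorname{div} f_n$ bounded in $\Leb^2$. Poincar\'e's inequality and the ellipticity of $a$ bound $(g_n)_n$ in $\mathring{\Sob}^1(\Omega)$, so Rellich's theorem gives an $\Leb^2$-convergent subsequence. The identity
\[
\langle f_n-f_m,f_n-f_m\rangle_a=-\langle \operatorname{div}(f_n-f_m),g_n-g_m\rangle
\]
then shows that $(f_n)_n$ is Cauchy. The first components are handled by Rellich's theorem directly. This step, namely keeping track of the weight $a$ in the Helmholtz-type decomposition, is the one I expect to need the most care.

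\textbf{Step 3: checking the hypotheses on the right-hand sides.} Multiplication by the bounded invertible matrix $\begin{psmallmatrix}1&0\\0&a\end{psmallmatrix}$ preserves the following properties of $F$: being bcc, Hausdorff-continuity, half weak continuity, boundedness on bounded sets, and the linear growth bound. The same properties are assumed for $G$, and $A$ satisfies \cref{ass:A} by hypothesis. Thus \cref{th:LocalExist} yields a strong solution on $[0,T_0]$. Applying \cref{th:MaxExist} extends it to a maximal interval, and under the growth bound \cref{th:GlobalExist} (Gronwall) rules out the blow-up \eqref{eq:Blowup}, which gives the global statement.

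\textbf{Uniqueness.} I would read the uniqueness claim in the same sense as the preceding corollaries for the heat and Schr\"odinger semigroups. There, \cref{th:GlobalExist} extends ``the'' solution furnished by the fixed-point construction, and the claim is that this maximal solution is well defined on $[0,T]$. I do not see how to establish genuine uniqueness among all solutions of the inclusion from bcc and Hausdorff-continuity alone. If that stronger statement is intended, I would add a Lipschitz-type hypothesis in the Hausdorff metric. One would then compare two solutions through the Duhamel formula for the $u$-part and through the monotonicity of $A(t)$ for the $v$-part, and conclude with Gronwall's lemma. I expect this to be the main obstacle, since it likely cannot be done without such an extra assumption.
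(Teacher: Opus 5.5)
\textbf{Existence part.} Your existence argument follows the paper's route. The paper's proof is a one-line application of \cref{th:LocalExist} and \cref{th:GlobalExist}, resting on the preceding discussion of skew-selfadjointness and of the compact embedding $\dom(\widetilde E)\hookrightarrow\widetilde H$ via Helmholtz decomposition, Poincar\'e and Rellich.

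Your Steps~1--2 are more careful than the paper. The paper asserts skew-selfadjointness only for the unweighted operator $\begin{psmallmatrix}0&\operatorname{div}\\ \operatorname{grad}_0&0\end{psmallmatrix}$ and silently drops the factor $\begin{psmallmatrix}1&0\\0&a^{-1}\end{psmallmatrix}$ in front of $\partial_t$. Your weighted scalar product is exactly what makes the actual generator skew-selfadjoint. Your weighted Cauchy identity is the paper's computation with $a$ carried along, and it is correct.

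Two points should be made explicit:
\begin{enumerate}
\item \cref{th:LocalExist} also requires $F,G$ to map bounded sets to bounded sets. This is not among the corollary's hypotheses and does not follow from the listed continuity assumptions in infinite dimensions, so you (like the paper) are tacitly adding it.
\item $\begin{psmallmatrix}1&0\\0&a\end{psmallmatrix}F$ takes values in your complement $\Leb^2(\Omega)\times a\operatorname{grad}_0\mathring{\Sob}^1(\Omega)$ precisely when $F$ is realised with values in $\Leb^2(\Omega)\times\operatorname{grad}_0\mathring{\Sob}^1(\Omega)$. Say so, or compose with the projection.
\end{enumerate}

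\textbf{Uniqueness.} The genuine problem is here. You are right that uniqueness cannot be derived. The paper's proof does not give it either, since \cref{th:LocalExist} rests on \cref{th:FixedPointTheorem} and yields only \emph{at least one} solution.

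However, your suggested remedy fails: no Hausdorff--Lipschitz condition yields uniqueness for set-valued right-hand sides. In \eqref{eq:IVP}, take $F\equiv B_H[0,1]$ and $G\equiv\{0\}$. Both are constant, hence bcc, Hausdorff-Lipschitz with constant $0$, half weakly continuous, bounded and of linear growth. Yet every measurable $f$ with $\norm{f(t)}\leq 1$ gives a solution $T(t)u_0+\int_0^tT(t-s)f(s)\dx[s]$. The choices $f=0$ and $f\equiv h\neq 0$ give solutions differing by $\int_0^tT(s)h\dx[s]$. This is nonzero for small $t>0$, since $\tfrac1t\int_0^tT(s)h\dx[s]\to h$.

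Your Gronwall comparison breaks because $\Hd\bigl(F(U_1(t)),F(U_2(t))\bigr)$ only controls $\dist\bigl(f_1(t),F(U_2(t))\bigr)$. It does not control $\norm{f_1(t)-f_2(t)}$ for the two given selections. The Duhamel/monotonicity/Gronwall argument you sketch works only for single-valued Lipschitz $F,G$. Under the corollary's hypotheses, only existence can be claimed.

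For the same reason there is no distinguished ``solution furnished by the construction''. The passage to a maximal and then global solution should therefore be done by concatenating local solutions. This is legitimate because the $T_0$ of \cref{th:LocalExist} depends only on a bound for the initial data, and the growth estimate supplies such a bound. It should not rest on uniqueness, as the paper's proof of \cref{th:GlobalExist} does.
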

\begin{proof}
  Apply \cref{th:LocalExist} and \cref{th:GlobalExist}.
\end{proof}

%%%%%%%%%%%%%%%%%%%%%%%%%%%%%%
%    S U B S E C T I O N
\subsection{Maxwell semigroup}
\label{subsec:Maxwell}
On a domain $\Omega\subseteq \mathbb{R}^{3}$ with dielectricity $\epsilon\colon \Omega \to \mathbb{R}^{3\times 3}$, permeability $\mu\colon \Omega \to \mathbb{R}^{3\times 3}$ and (external) current density $j\colon \mathbb{R}\times \Omega \to \mathbb{R}^{3}$ we incorporate Maxwell's equations into our setting by posing
\begin{equation}
  \label{eq:Maxwell}
  \left\{
  \begin{aligned}
    \tfrac{\dd}{\dx[t]}\epsilon u_{1} \!+\! \sigma u_{1} \!-\! \operatorname{curl}u_{2} &\in j(u_{1}) \!+\! F_{1}(u_{1},u_{2},v) &&\text{on}\ (0, T) \!\times\! \Leb^{2}(\Omega)^{3},\\
    \tfrac{\dd}{\dx[t]}\mu u_{2} \!+\! \operatorname{curl}_{0}u_{1} &\in F_{2}(u_{1},u_{2},v) &&\text{on}\ (0, T) \!\times\! \Leb^{2}(\Omega)^{3},\\
    \tfrac{\dd}{\dx[t]}v \!+\! A(\argdot)v &\in G(u_{1},u_{2},v) &&\text{on}\ (0, T) \!\times\! \mathcal{H},\\
    (u_{1}(0), u_{2}(0), v(0)) &= (u_{1,0}, u_{2,0}, v_{0}) &&\text{in}\ \Leb^{2}(\Omega; \mathbb{C})^{3} \!\times\! \Leb^{2}(\Omega; \mathbb{C})^{3} \!\times\! \mathcal{H},
  \end{aligned}
  \right.
\end{equation}
where again, $A$ satisfies the assumptions of \cref{th:LocalExist} and $\operatorname{curl}_{0}$ denotes the weak rotation with Dirichlet boundary (defined similarly as the gradient above, see, again, \cite[p.~92ff]{Waurick2022} for details). We point out that the left-hand side of the system
\begin{equation*}
  \left\{
  \begin{aligned}
    \tfrac{\dd}{\dx[t]}\epsilon u_{1} - \operatorname{curl}u_{2} &\in j(u_{1}) +  F_{1}(u_{1},u_{2},v) &&\text{in}\ (0, T) \times \Leb^{2}(\Omega)^{3}, \\
    \tfrac{\dd}{\dx[t]}\mu u_{2} + \operatorname{curl}_{0}u_{1} &\in F_{2}(u_{1},u_{2},v) &&\text{in}\ (0, T) \times \Leb^{2}(\Omega)^{3},
  \end{aligned}
  \right.
\end{equation*}
can be written as
\begin{equation*}
  \biggl[\partial_{t}
  \begin{pmatrix} \epsilon & 0 \\ 0 & \mu \end{pmatrix}
  + \underbrace{\begin{pmatrix} 0 & - \operatorname{curl} \\ \operatorname{curl}_{0} & 0 \end{pmatrix}}_{\eqcolon E}
  \biggr]
  \begin{pmatrix} u_{1} \\ u_{2} \end{pmatrix}
  \in F(u_{1},u_{2},v).
\end{equation*}
$E$ is a skew-selfadjoint operator on $H\coloneq \Leb^{2}(\Omega)^{3}\times \Leb^{2}(\Omega)^{3}$. Hence,  $E$ generates a $\mathrm{C}_{0}$-semigroup. Note that $E$ does not have compact resolvent, since gradient fields are contained in $\ker (\operatorname{curl})$, i.e., $\nabla \Sob^{1}(\Omega) \subseteq \ker (\operatorname{curl})$ and $\mathring{\nabla}\mathring{\Sob}^{1}(\Omega) \subseteq \ker (\operatorname{curl}_{0})$.\\
By refining the Helmholtz-decomposition from \cref{subsec:Wave}, we obtain (see~\cite[thm.~5.3]{Pauly2016})
\begin{alignat*}{3}
  \Leb^{2}(\Omega)^{3} &= \nabla \Sob^{1}(\Omega) &&\oplus \operatorname{curl}_{0}\Sob(\operatorname{curl}_{0};\Omega) &&\oplus \ker (\operatorname{curl}_{0})\cap \ker (\operatorname{div})\\
  &= \mathring{\nabla} \mathring{\Sob}^{1}(\Omega) &&\oplus \operatorname{curl}\Sob(\operatorname{curl};\Omega) &&\oplus \ker (\operatorname{curl})\cap \ker (\operatorname{div}_{0}),
\end{alignat*}
where the last terms on the right vanish for topologically trivial domains $\Omega$, i.e., $\Omega$ simply connected and $\mathbb{R}^{3}\!\setminus \!\Omega$ connected (see~\cite[rem.~5.4]{Pauly2016}, \cite{PW22,Pi82}). We will assume this case. We can then factorize the null spaces
\begin{equation*}
  \ker (\operatorname{curl}) = \operatorname{grad}_{0} \mathring{\Sob}^{1}(\Omega) \qquad \text{and} \qquad
  \ker (\operatorname{curl}_{0}) = \operatorname{grad} \Sob^{1}(\Omega)
\end{equation*}
to obtain the quotient Hilbert space
\begin{align*}
  \widetilde{H} &\coloneq \nicefrac{\Leb^{2}(\Omega)^{3}}{\operatorname{grad} \Sob^{1}(\Omega)} \times \nicefrac{\Leb^{2}(\Omega)^{3}}{\operatorname{grad}_{0} \mathring{\Sob}^{1}(\Omega)}\\
  \intertext{and the corresponding semigroup generator $\widetilde{E}$ with domain}
  \operatorname{dom}\bigl(\widetilde{E}\bigr) &\coloneq \nicefrac{\Sob(\operatorname{curl}_{0};\Omega)}{\operatorname{grad} \Sob^{1}(\Omega)} \times \nicefrac{\Sob(\operatorname{curl};\Omega)}{\operatorname{grad}_{0} \mathring{\Sob}^{1}(\Omega)}\subseteq \widetilde{H}.
\end{align*}
To verify compactness of the embedding $\operatorname{dom}(\widetilde{E}) \hookrightarrow \widetilde{H}$, one can use that both spaces in the product can be seen as spaces $\Sob(\operatorname{div};\Omega)$ and $\Sob(\operatorname{div}_{0};\Omega)$ with certain tangential trace conditions attached. In fact, this follows from the Picard--Weber--Weck selection theorem, see \cite{Pi84}. For convenience, we also refer to a similar rationale as in \cref{subsec:Wave} in \cite[thm.~4.1]{Skrepek2022}. Hence, one arrives at the following result.
\begin{corollary}
  If $F$ and $G$ are bcc, Hausdorff-continuous, and half weakly continuous and $j\colon \Leb^{2}(\Omega)^{2}\to \Leb^{2}(\Omega)^{3}$ is continuous, problem \eqref{eq:Schroedinger} posed on $\widetilde{H}\times \mathcal{H}$ has a unique local solution
  \begin{equation*}
    (u_{1}, u_{2}, v) \in \mathcal{C}\bigl([0,T];\nicefrac{\Leb^{2}(\Omega)^{3}}{\operatorname{grad} \Sob^{1}(\Omega)}\bigr) \times \mathcal{C}\bigl([0,T]; \nicefrac{\Leb^{2}(\Omega)^{3}}{\operatorname{grad}_{0} \mathring{\Sob}^{1}(\Omega)}\bigr) \times \mathcal{C}\bigl([0,T]; \mathcal{H}\bigr).
  \end{equation*}
  If additionally, $F, G$ satisfy a linear growth bound, the solution is global.
\end{corollary}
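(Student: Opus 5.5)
The plan is to follow the pattern of the preceding corollaries for the heat, Schr\"odinger and wave cases: rewrite \eqref{eq:Maxwell} as an instance of \eqref{eq:IVP} on $\widetilde{H}\times\mathcal{H}$ and then invoke \cref{th:LocalExist} for local existence and \cref{th:GlobalExist} for global existence.

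First I normalise the material weights. Assuming $\epsilon,\mu$ are selfadjoint, bounded and uniformly positive definite, I endow $H$ with the equivalent inner product weighted by $M\coloneqq\begin{psmallmatrix}\epsilon&0\\0&\mu\end{psmallmatrix}$. Then $M^{-1}\bigl(E+\begin{psmallmatrix}\sigma&0\\0&0\end{psmallmatrix}\bigr)$ is skew-selfadjoint up to a bounded accretive perturbation in this weighted space, so it generates a $\mathrm{C}_0$-semigroup by Stone's theorem and bounded perturbation. The divergence-free conditions defining the quotient $\widetilde{H}$ are not changed by the bounded invertible $M$ in the relevant sense, since $\ker(\operatorname{curl}_0)$ and $\ker(\operatorname{curl})$ are invariant for the generator. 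Consequently the operator descends to $\widetilde{H}$.

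Second, compactness of the resolvent follows from the compact embedding $\operatorname{dom}(\widetilde{E})\hookrightarrow\widetilde{H}$ (Picard--Weber--Weck, as discussed above). Bounded perturbations and equivalent norms preserve this, because the domain is unchanged.

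Third, I treat the right-hand side as $\widetilde{F}(u,v)\coloneqq M^{-1}\bigl(\{(j(u_1),0)\}+F(u,v)\bigr)$, with $G$ unchanged. I then check that $\widetilde{F}$ inherits the hypotheses of \cref{th:LocalExist}:
\begin{itemize}
\item \emph{bcc:} a singleton translate of a bcc set mapped under the bounded invertible linear $M^{-1}$ is again nonempty, bounded, closed and convex.
\item \emph{Hausdorff-continuity:} $\Hd$ is translation-compatible, so $\Hd(\widetilde F(u),\widetilde F(u'))\le\|M^{-1}\|\bigl(\|j(u_1)-j(u_1')\|+\Hd(F(u),F(u'))\bigr)$.
\item \emph{Boundedness:} this requires $j$ to be bounded on bounded sets.
\item \emph{Linear growth:} in the global part, this requires $j$ to have linear growth.
\end{itemize}
$G$ satisfies the hypotheses directly. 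Applying \cref{th:LocalExist} and \cref{th:GlobalExist} then gives the claim.

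The main obstacle is half weak continuity of $u\mapsto \{j(u_1)\}+F(u,v)$. Mere norm continuity of $j$ does not give weak-to-norm continuity on balls, so here I would need to strengthen the hypothesis on $j$, for instance to compactness or weak-to-strong continuity on bounded sets, or else absorb $j$ into $F_1$. A secondary concern is that $j$ must be well defined on equivalence classes in the quotient space, i.e.\ invariant under adding gradient fields. I expect to have to state this compatibility explicitly.
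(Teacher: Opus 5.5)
\textbf{Verdict: genuine gaps.} Your strategy matches the paper's: its proof consists only of invoking \cref{th:LocalExist} and \cref{th:GlobalExist}. Your reservations about $j$ are justified, and the paper does not address them.

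Norm continuity of $j$ gives none of what \cref{th:LocalExist} and \cref{th:GlobalExist} need. It does not give half weak continuity of $u\mapsto\{(j(u_1),0)\}+F(u,v)$: take $j(u_1)=u_1$ and $F$ independent of $u$; the Hausdorff distance of the two translates is $\|u_1-u_1'\|$, which does not vanish along weakly null unit sequences. It also gives neither boundedness on bounded sets nor linear growth, and $j$ must be constant on equivalence classes. So this route proves the corollary only under strengthened hypotheses on $j$, or with $j$ absorbed into $F_1$.

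\textbf{First gap: the quotient reduction.} You claim $N\coloneqq\ker(\operatorname{curl}_0)\times\ker(\operatorname{curl})$ is invariant for the generator. With $\Sigma\coloneqq\operatorname{diag}(\sigma,0)$, this fails as soon as $\sigma\neq0$. For $(u_1,u_2)\in N$ one gets $M^{-1}(E+\Sigma)(u_1,u_2)=(\epsilon^{-1}\sigma u_1,0)$. Already for $\epsilon=1$, a nonconstant smooth scalar $\sigma$ and $u_1=\nabla\varphi$ with $\varphi\in\mathcal{C}_c^\infty(\Omega)$, one has $\operatorname{curl}(\sigma u_1)=\nabla\sigma\times\nabla\varphi\neq0$ in general.

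Consequently:
\begin{itemize}
\item the damped generator does not descend to $\widetilde H$;
\item on the unreduced space it has no compact resolvent, since its domain contains the infinite-dimensional $N$, on which the graph norm is equivalent to the $H$-norm;
\item only $M^{-1}E$, which annihilates $N$, descends.
\end{itemize}
The term $\sigma u_1$ would therefore have to move to the right-hand side. There it has exactly the defects you identified for $j$: it is neither weak-to-norm continuous on balls nor well defined on classes. The way out is to assume $\sigma=0$, or that $\epsilon^{-1}\sigma$ maps $\ker(\operatorname{curl}_0)$ into itself. The same caveat applies to $M^{-1}$ acting on $\widetilde H$-valued sets $F(u,v)$, because $M^{-1}N\not\subseteq N$ for general $\epsilon,\mu$.

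\textbf{Second gap: uniqueness.} Your final sentence does not deliver the word ``unique''. \cref{th:LocalExist} only provides at least one solution, and uniqueness actually fails under the stated hypotheses. Take $j\equiv0$, $F\equiv B[0,1]$ and $G\equiv\{0\}$. These are bcc, Hausdorff-continuous, half weakly continuous, bounded and of linear growth. For every $e$ with $0<\|e\|\le1$, the constant selection $f\equiv e$ gives a solution. Its first component differs from the one for $f\equiv0$ by $\int_0^t T(s)e\,\mathrm{d}s=te+o(t)\neq0$ for small $t>0$. So neither your argument nor the paper's yields more than existence, and the corollary should be stated that way.
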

\begin{proof}
  Apply \cref{th:LocalExist} and \cref{th:GlobalExist}.
\end{proof}

%%%%%%%%%%%%%%%%%%%%%%%%%%%%%%
%    S E C T I O N
\section*{Acknowledgments}
This work was partially developed when the second author from UNIFEI visited the first and third authors from TUBAF in Freiberg,  Germany. He would like to express his gratitude for their hospitality during his stay. J. Simsen has been partially supported by the Brazilian research agencies FAPEMIG and CNPq, processes FAPEMIG RED-00133-21 and APQ-06590-24 (Bolsista FAPEMIG-CNPq - Brasil).

The first author is funded by a graduate student stipend of the federated German state of Saxony.

%%%%%%%%%%%%%%%%%%%%%%%%%%%%%%
% B I B L I O G R A P H Y
\bibliographystyle{abbrvurl}
\bibliography{references}

\end{document}